\theoremstyle{definition}
\newtheorem{rmk}{Remark}
\theoremstyle{plain}
\newtheorem{theorem}[rmk]{Theorem}
\newtheorem{lemma}[rmk]{Lemma}
\newtheorem{proposition}[rmk]{Proposition}
\newtheorem*{set*}{\TagSymbol}
\providecommand{\TagSymbol}{}
\title{Self-intersection of the relative dualizing sheaf on modular curves $X(N)$}
\author{Miguel Grados} 
\address{\rm Escuela de Ingeniería y Ciencias, Instituto Tecnológico y de Estudios Superiores de Monterrey, Av.~Carlos Lazo 100, Santa Fe, Álvaro Obregón, 01389 CDMX, Mexico}
\email{miguel.grados@tec.mx}
\author{Anna-Maria von Pippich}
\address{\rm Fachbereich Mathematik und Statistik, Universit\"at Konstanz, Universit\"atsstra{\upshape{\ss}}e 10, 78464 Konstanz, Germany}
\email{anna.pippich@uni-konstanz.de}
\date{\today}
\begin{document}
\setcounter{tocdepth}{1}
\setcounter{section}{0}

\begin{abstract}
Let $N\geq 3$ be a composite, odd, and square-free integer and let $\Gamma$ be the principal congruence subgroup of level $N$. Let $X(N)$ be the modular curve of genus $g_{\Gamma}$ associated to $\Gamma$. In this article, we study the Arakelov invariant $e(\Gamma)=\overline{\omega}^2/\varphi(N)$, with $\overline{\omega}^2$ denoting the self-intersection of the relative dualizing sheaf for the minimal regular model of $X(N)$, equipped with the Arakelov metric, and $\varphi(N)$ is the Euler's phi function. Our main result is the asymptotics $e(\Gamma) = 2g_{\Gamma}\log(N) + o(g_{\Gamma}\log(N))$, as the level $N$ tends to infinity.
\end{abstract}

\maketitle
%\tableofcontents

%=================%
%== SECTION ONE ==%
%=================%
\section{Introduction}

\subsection{Relevance of the known asymptotics}
%Arithmetic intersection theory is a part of modern number theory, which has its origins in the foundational work of S.~Arakelov \cite{Arakelov} on arithmetic surfaces and which was further developed by G.~Faltings \cite{Faltings} culminating in arithmetic versions of the Riemann--Roch theorem and of Noether's formula, and the proof of the Mordell conjecture. 
%Arakelov invariants, such as the Faltings's delta function and the self-intersection of the relative dualizing sheaf, play an important role in the above listed statements.
%by H.~Gillet and C.~Soul\'e \cite{GilletSoule} to general arithmetic varieties. 
%The first major achievement of Arakelov theory was the proof of 
%the Mordell conjecture by G.~Faltings, which states that a smooth projective curve of genus greater 
%than one has only finitely many rational points.

Arakelov invariants, such as the Faltings's delta function and the self-intersection of the relative dualizing sheaf, play an important role in Arakelov geometry. In particular, the self-intersection of the relative dualizing sheaf on an arithmetic surface is an essential contribution to the Faltings's height of the Jacobian \cite{MoretBailly}. It is known that suitable upper bounds for this invariant lead to important applications in arithmetic geometry. For instance, an analogue of the Bogomolov--Miyaoka--Yau inequality for arithmetic surfaces implies an effective version of the Mordell conjecture \cite{MoretBailly0}. Also, sufficiently good upper bounds for the self-intersection of the relative dualizing sheaf are crucial in the work of B.~Edixhoven and his co-authors, when estimating the running time of his algorithm for determining Galois representations \cite{Edixhoven}. However, establishing such bounds turned out to be a complicated problem. The known bounds have been established so far by the work of Abbes, Ullmo, Michel, Edixhoven, de Jong, and others, see, e.g.,~\cite{AbbesUllmo}, \cite{CurillaKuehn}, \cite{Edixhoven}, \cite{Javanpeykar}, \cite{deJong}, \cite{MichelUllmo}. They all deal with the calculation of the self-intersection of the relative dualizing sheaf for regular models of certain modular curves, Fermat curves, Belyi curves, or hyperelliptic curves, either establishing bounds or computing explicit asymptotics for this numerical invariant.
% and it measures the distance of the rational points of modular curves over number fields in the Jacobian \cite{BombieriGubler}. 

In their influential work \cite{AbbesUllmo}, A.~Abbes and E.~Ullmo considered the modular curves 
$X_{0}(N)/\mathbb{Q}$. Denoting by $\mathcal{X}_{0}(N)/\mathbb{Z}$ a minimal regular model for $X_{0}(N)/\mathbb{Q}$, and by $\overline{\omega}_{\mathcal{X}_{0}(N)/\mathbb{Z}}$ the relative dualizing sheaf on $\mathcal{X}_{0}(N)$, equipped with the Arakelov metric, they established the asymptotics 
\begin{align}
\label{bound-abbesullmo}
    \overline{\omega}_{\mathcal{X}_0(N)/\mathbbm{Z}}^2 \sim
     3g_{\overline{\Gamma}_0(N)}\log(N), 
     %+ o(g_{\overline{\Gamma}_0(N)}\log(N)),
\end{align}
%In their influential work \cite{AbbesUllmo}, A.~Abbes and E.~Ullmo 
%considered the minimal regular model $\mathcal{X}_0(N)/\mathbbm{Z}$ of  modular curves $X_0(N)$, and they obtained the following asymptotics for the self-intersection of the relative dualizing sheaf 

as $N\rightarrow\infty$, where $N$ is assumed to be square-free with $2,3\nmid N$ and such that the genus $g_{\overline{\Gamma}_0(N)}$ of $X_0(N)$ is greater than zero (see \cite{AbbesUllmo} and \cite[Th\'e{}or\`e{}me 1.1]{MichelUllmo}). As consequence of this result, P.~Michel and E.~Ullmo obtained in \cite{MichelUllmo} an asymptotics for Zhang's admissible self-intersection number \cite{Zhang}, and with this they proved an effective version of the Bogomolov conjecture for the curve $X_0(N)/\mathbbm{Q}$. Namely, if $h_{\mathrm{NT}}$ denotes the N\'e{}ron--Tate height on the Jacobian \cite{BombieriGubler}, then for all $\varepsilon>0$ and sufficiently large $N$, the set $$\{x\in X_0(N)(\overline{\mathbbm{Q}}) \mid h_{\mathrm{NT}}(x)\leq (2/3-\varepsilon)\log(N)\}$$ is finite, whereas the set $$\{x\in X_0(N)(\overline{\mathbbm{Q}}) \mid h_{\mathrm{NT}}(x)\leq (4/3+\varepsilon)\log(N)\}$$ is infinite. Recently, Banerjee--Borah--Chaudhuri \cite{BanerjeeBorahChaudhuri} removed the square-free condition on $N$ and proved that \eqref{bound-abbesullmo} and the Bogomolov conjecture hold for curves $X_0(p^2)/\mathbbm{Q}$, with $p$ a prime number.
% is infinite (see \cite[Th\'e{}or\`e{}me 1.2]{MichelUllmo}). 

By its very definition, the self-intersection of the relative dualizing sheaf on modular curves is the sum of a geometric part that encodes the finite intersection of divisors coming from the cusps, and an analytic part which is given in terms of the Arakelov Green's function evaluated at these cusps. The leading term $3g_{\overline{\Gamma}_0(N)}\log(N)$ in the asymptotics \eqref{bound-abbesullmo} is the sum of $g_{\overline{\Gamma}_0(N)}\log(N)$ that comes from the geometric part, and $2g_{\overline{\Gamma}_0(N)}\log(N)$ that comes from the analytic part. J.~Kramer conjectured that this result can be generalized to arbitrary modular curves, that is, that the main term in the asymptotics of the self-intersection of the relative dualizing sheaf for a semi-stable model of modular curves of level $N$ and genus $g$ is $3g\log(N)$, as $N$ tends to infinity. Following the lines of proof in \cite{AbbesUllmo}, Kramer's Ph.D. student H.~Mayer \cite{Mayer} obtained a positive answer to this conjecture for the case of modular curves $X_1(N)/\mathbb{Q}$. More precisely, for square-free $N$ of the form $N =N^{\prime}\cdot q\cdot r$, where $q, r> 4$ are different primes, he proved the asymptotics
 $$
 \overline{\omega}_{\mathcal{X}_1(N)/\mathbbm{Z}}^2 \sim 3g_{\overline{\Gamma}_1(N)}\log(N),
 $$ 
as $N\to\infty$; here, $\mathcal{X}_1(N)/\mathbbm{Z}$ is a minimal regular model for $X_1(N)/\mathbb{Q}$. From this asymptotics he was then able to deduce the validity of the Bogomolov conjecture in this case. 
\vskip 2mm
%the minimal regular model 

\subsection{Main result}
In this article, we establish asymptotics for the self-intersection of the relative dualizing sheaf on modular curves $X(N)$, as the level $N$ tends to infinity. Following mostly the lines of proof in \cite{AbbesUllmo}, we will first obtain a formula for this Arakelov invariant, in which the geometric and analytic parts are explicitly given. Then, we proceed to compute the asymptotics for each of these parts. Our main result is the following asymptotics (see Theorem \ref{id:main result of the paper}) 
%we present here contributes to Kramer's conjecture by showing 
$$
\frac{1}{\varphi(N)}\overline{\omega}_{\mathcal{X}(N)/\mathbbm{Z}[\zeta_N]}^2 \sim 2g_{\overline{\Gamma}(N)}\log(N),
$$ 
as $N\to\infty$, where $N\geq 3$ is a composite, odd, and square-free integer; here, $\mathbbm{Z}[\zeta_N]$ denotes the ring of integers of the cyclotomic field $\mathbb{Q}(\zeta_N)$ with a primitive $N$-th root of unity $\zeta_N$, and $\mathcal{X}(N)/\mathbb{Z}[\zeta_N]$ is a minimal regular model for $X(N)/\mathbb{Q}(\zeta_N)$. It turns out that the right hand side $2g_{\overline{\Gamma}(N)}\log(N)$ in the asymptotics comes from the analytic part, which partially confirms Kramer's conjecture. 
The difference of our result with the previous cases lies in the underlying minimal regular model, where all the computations take place, namely, our model is a $\mathbbm{Z}[\zeta_N]$-scheme which is not semi-stable. The analogue computations for a semi-stable model are a theme of our future research.

\subsection{Sketch of proof}
To prove the main result, we follow the strategy given in the work of Abbes and Ullmo \cite{AbbesUllmo}. The proof starts by observing that $\overline{\omega}^2_{\mathcal{X}(N)/\mathbb{Z}[\zeta_{N}]}/\varphi(N)$ is the sum of an explicit geometric contribution $\mathcal{G}(N)$ and an explicit analytic contribution $\mathcal{A}(N)$ (see Proposition \ref{prop:self-intersection explicit formula}). In particular, using results of Manin--Drinfeld \cite{Elkik} and Faltings--Hriljac \cite[Theorem 3.1.]{Hriljac}, we show that
\begin{align*}
	\mathcal{G}(N) &= \frac{2g_{\overline{\Gamma}(N)}(V_0,V_{\infty})_{\mathrm{fin}} - (V_0,V_0)_{\mathrm{fin}} - (V_{\infty},V_{\infty})_{\mathrm{fin}}}{2(g_{\overline{\Gamma}(N)}-1)}.
\end{align*}
%which encodes the finite intersection number of divisors coming from the cusps (?)

We refer the reader to section \ref{section6.1} for the definitions of the vertical divisors $V_0$ and $V_{\infty}$. From this, a straight-forward computation yields the asymptotics (see Proposition \ref{asymptotics:geometric part})
\begin{align*}
	\frac{1}{\varphi(N)}\mathcal{G}(N) = o(g_{\overline{\Gamma}(N)}\log(N)),
\end{align*}

as $N\to\infty$. Furthermore, the analytic contribution $\mathcal{A}(N)$ is given in terms of the 
Arakelov (or canonical) Green's function $g_{\mathrm{Ar}}(\cdot,\cdot)$, evaluated at the corresponding cusps, and equals
\begin{align*}
	\mathcal{A}(N) &= 4g_{\overline{\Gamma}(N)}(g_{\overline{\Gamma}(N)}-1)\sum_{\sigma:\mathbbm{Q}(\zeta_N) \hookrightarrow \mathbbm{C}} g_{\mathrm{Ar}}(0^{\sigma},\infty^{\sigma}).
\end{align*}

To derive the desired asymptotics for $\mathcal{A}(N)$, one first uses a fundamental identity by Abbes--Ullmo, which expresses the Arakelov Green's function $g_{\mathrm{Ar}}(q_1,q_2)$ at the cusps $q_1$ and $q_2$ of $X(N)$ in terms the scattering constant $\mathscr{C}_{q_1q_2}$ at $q_1$ and $q_2$, the hyperbolic volume $v_{\overline{\Gamma}(N)}$ of $X(N)$, the constant term in the Laurent expansion at $s=1$ of the Rankin--Selberg transforms $\mathscr{R}_{q_1}$ resp. $\mathscr{R}_{q_2}$ of the Arakelov metric at $q_1$ and $q_2$, respectively, and a contribution $\mathscr{G}$ involving the constant term of the automorphic Green's function $G_s(z,w)$ at $s=1$; we refer the reader to \eqref{dfn:Scattering constant}, \eqref{dfn:RankinSelberg constant}, and \eqref{def_mathscrG} for precise definitions.
More precisely, one has (see \eqref{id:Abbes--Ullmo formula})
\begin{align*}
	g_{\mathrm{Ar}}(q_1,q_2) = -2\pi\,\mathscr{C}_{q_1q_2} - \frac{2\pi}{v_{\overline{\Gamma}(N)}} + 2\pi(\mathscr{R}_{q_1} + \mathscr{R}_{q_2}) + 2\pi\,\mathscr{G}.
\end{align*}

The relevant scattering constants $\mathscr{C}_{q_1q_2}$ are computed, e.g., in \cite{GradosPippich}, and the asymptotics for $\mathscr{G}$ follows from bounds proven in \cite{JorgensonKramer}. Furthermore, we show that to deal with the terms $\mathscr{R}_{q_1} + \mathscr{R}_{q_2}$ for the cusps in question one is reduced to only compute $\mathscr{R}_{\infty}$ (see Lemma \ref{lem:reduction step}). To provide the explicit formula for $\mathscr{R}_{\infty}$ given in Proposition \ref{lem:R_infty}, we apply methods from the spectral theory of automorphic functions, namely, we represent $\mathscr{R}_{\infty}$ in terms of a particular automorphic kernel. Decomposing this automorphic kernel into terms involving hyperbolic and parabolic elements, $\mathscr{R}_{\infty}$ is divided into hyperbolic and parabolic contributions $\mathscr{R}_{\infty}^{\mathrm{hyp}}$ and $\mathscr{R}_{\infty}^{\mathrm{par}}$ (see identity \eqref{eqn:automorphic interpretation}). The computation of $\mathscr{R}_{\infty}^{\mathrm{hyp}}$ and $\mathscr{R}_{\infty}^{\mathrm{par}}$ are the technical heart of the paper. Note that to determine the hyperbolic contribution $\mathscr{R}_{\infty}^{\mathrm{hyp}}$, we proceed differently than Abbes--Ullmo and we reduce the calculation of the residue of the corresponding zeta function determined by hyperbolic elements to the well-known residue of the zeta function of a suitably generalized id\`ele class group of the quadratic extension. From this, it is finally shown that (see Proposition \ref{id:asymptotics of analytical contrib})
\begin{align*}
	\frac{1}{\varphi(N)}\mathcal{A}(N) = 2g_{\overline{\Gamma}(N)}\log(N) + o(g_{\overline{\Gamma}(N)}\log(N)),
\end{align*}

as $N\to\infty$. Adding up the asymptotics for the geometric and for the analytic contribution then proves the main result.
% the main term comes from the scatering constant.

%\textcolor{red}{Miguel: any idea how to add here the differnce in the stability of the model?}
%\textcolor{blue}{Idea: In the semi-stable cases (Abbes--Ullmo and Mayer), one has two irreducible components in the bad fibers and each of this components is intersected either by $H_0$ or $H_{\infty}$. In our case, we have that a bad fiber at $\mathfrak{p}$ in $\mathcal{X}/S$ has $p+1$ irreducible components, where $\mathfrak{p}|p$ with $p|N$. This difference may cause that $(V_0,V_0) = (V_{\infty},V_{\infty}) = (V_0,V_{\infty})$ for Abbes--Ullmo and Mayer, whereas for our case, we have only $(V_0,V_0) = (V_{\infty},V_{\infty})$; therefore, the $g\log(N)$ and 0 contributions for the corresponding cases.}

\subsection{Outline of the article}
The paper is organized as follows. In Section 2, we set our main notation and review basic facts on modular curves $X(N)$, non-holomorphic Eisenstein series, the spectral theory of automorphic forms, and the Arakelov metric. The core of this part is the identity \eqref{eqn:automorphic interpretation}, which states that the Rankin--Selberg constant of the Arakelov metric $\mathscr{R}_{\infty}$ at the cusp $\infty$ is essentially the sum of two contributions $\mathscr{R}_{\infty}^{\mathrm{hyp}}$ and $\mathscr{R}_{\infty}^{\mathrm{par}}$, associated to the hyperbolic and parabolic elements of $\overline{\Gamma}(N)$, respectively. In Section 3, we turn our attention to the minimal regular model $\mathcal{X}/S$ of the curve $X(N)$, where $S=\mathrm{Spec}(\mathbbm{Z}[\zeta_N])$ with $\zeta_N$ an $N$-th root of unity. Here, in Proposition \ref{prop:Model description}, we describe the good and bad fibers of $\mathcal{X}/S$, and briefly recall the moduli interpretation of the cusps of $X(N)$. Further, in Proposition \ref{prop:iota iso}, we describe how the cusps $0$ and $\infty$ are mapped on $X(N)$ under a given embedding $\mathbbm{Q}(\zeta_N) \xhookrightarrow{} \mathbbm{C}$. The importance of this result will be clear in the proof of Proposition \ref{id:asymptotics of analytical contrib}. In Section 4, we begin with the study of the zeta function associated to an hyperbolic element of $\overline{\Gamma}(N)$. In Proposition \ref{id:identity of zeta functions}, we show that this zeta function is, in fact, equal to a partial zeta function of a suitable narrow-ray ideal class, up to a holomorphic function. Using this fact, we then proceed to compute an asymptotics for the hyperbolic contribution $\mathscr{R}_{\infty}^{\mathrm{hyp}}$. This is done in Proposition \ref{prop:hyperbolic contribution}. In Section 5, we deal with the computation of the parabolic contribution $\mathscr{R}_{\infty}^{\mathrm{par}}$, and for this we proceed in the same way as in \cite{AbbesUllmo}, adapting the results therein to our case. Finally, in Section 6, we state our main result in Theorem \ref{id:main result of the paper}, and for the proof, we first provide a formula for the self-intersection of the relative dualizing sheaf in Proposition \ref{prop:self-intersection explicit formula}, where the geometric and analytic parts are explicitly given. Then, using the computations from previous sections, we determine the desired asymptotics.

\subsection{Acknowledgements}
% Idea:
We want to express our sincere gratitude to J.~Kramer, who encouraged us to work on this fascinating topic, for his patience, support, and for his guidance that led to the solution of the analytical part. In the same spirit, we are much imdebted to B.~Edixhoven for his generosity and support in developing of the results presented in Section 3 of this paper. We also want to thank P.~Bruin for his valuable comments on Section 3. Both authors acknowledge support from the International DFG Research Training Group \textit{Moduli and Automorphic Forms: Arithmetic and Geometric Aspects} (GRK 1800). Finally, the second named author would like to acknowledge support from the LOEWE research unit ``Uniformized structures in arithmetic and geometry'' of Technical University Darmstadt and Goethe-University Frankfurt.

%=================%
%== SECTION TWO ==%
%=================%
\section{Background material}

\subsection{The modular curve $X(N)$}
Let $\mathbbm{H} \coloneqq \{z=x+iy \in\mathbbm{C} \mid x,y\in\mathbb{R}; y>0\}$ be the hyperbolic upper half-plane endowed with the hyperbolic volume form $\mu_{\mathrm{hyp}}(z) \coloneqq \mathrm{d}x\,\mathrm{d}y / y^2$, and let $\mathbbm{H}^* \coloneqq \mathbbm{H} \sqcup \mathbbm{P}^{1}(\mathbbm{R})$ be the union of $\mathbbm{H}$ with its topological boundary. The group $\mathrm{PSL}_2(\mathbbm{R})$ acts on $\mathbbm{H}^*$ by fractional linear transformations. This action is transitive on $\mathbbm{H}$, since $z=x+iy=n(x)a(y)i$ with 
\begin{align*}
	n(x) \coloneqq \begin{pmatrix}1&x\\0&1 \end{pmatrix}, \quad a(y) \coloneqq \begin{pmatrix}y^{1/2} & 0 \\ 0 & y^{-1/2}\end{pmatrix}.
\end{align*}

By abuse of notation, we represent an element of $\mathrm{PSL}_2(\mathbbm{R})$ by a matrix. We set $I\coloneqq\begin{psmallmatrix} 1&0\\0&1\end{psmallmatrix}\in\mathrm{PSL}_2(\mathbbm{R})$.
Throughout the article, let $N\geq 3$ be a composite, odd, and square-free integer and let $\Gamma\coloneqq\overline{\Gamma}(N)$ denote the principal congruence subgroup of the modular group $\mathrm{PSL}_2(\mathbbm{Z})$. By $\Gamma_z \coloneqq \{\gamma\in\Gamma \mid \gamma{}z=z\}$ we denote the stabilizer subgroup of a point $z\in\mathbbm{H}^*$ with respect to $\Gamma$.
\vskip 2mm

%Let $N\geq 3$ be a composite, odd, and square-free integer and let $\Gamma\coloneqq\overline{\Gamma}(N)$ be the principal congruence subgroup of the modular group $\mathrm{PSL}_2(\mathbbm{Z})$. 
The quotient space $Y(N)\coloneqq \Gamma\backslash\mathbbm{H}$ resp.~$X(N) \coloneqq \Gamma\backslash\mathbbm{H}^*$ admits the structure of a Riemann surface and a compact Riemann surface of genus $g_{\Gamma}$, respectively. The hyperbolic volume form $\mu_{\mathrm{hyp}}(z)$ naturally defines a $(1,1)$-form on $Y(N)$, which we still denote by $\mu_{\mathrm{hyp}}(z)$, since it is $\mathrm{PSL}_2(\mathbbm{R})$-invariant on $\mathbbm{H}$. Thus, the hyperbolic volume of $Y(N)$ is given by $v_{\Gamma} \coloneqq \int_{Y(N)}\mu_{\mathrm{hyp}}(z)$. The genus $g_{\Gamma}$ and the hyperbolic volume $v_{\Gamma}$ are related by the identity
\begin{align*}
	g_{\Gamma} = 1 + \frac{v_{\Gamma}}{4\pi}\bigg( 1-\frac{6}{N} \bigg).
\end{align*}

By abuse of notation, we identify $X(N)$ with a fundamental domain $\mathcal{F}_{\Gamma}\subset\mathbbm{H}^*$; thus, at times we will identify points of $X(N)$ with their pre-images in $\mathbbm{H}^*$. 
\vskip 2mm

A cusp of $X(N)$ is the $\Gamma$-orbit of a parabolic fixed point of $\Gamma$ in $\mathbbm{H}^*$. By $P_{\Gamma}\subseteq \mathbbm{P}^1(\mathbbm{Q})$ we denote a complete set of representatives for the cusps of $X(N)$ and we write $p_{\Gamma} \coloneqq \#P_{\Gamma}$. We will always identify a cusp of $X(N)$ with its representative in $P_{\Gamma}$. Hereby, identifying $\mathbbm{P}^1(\mathbbm{Q})$ with $\mathbb{Q}\cup\{\infty\}$, we write elements of $\mathbbm{P}^1(\mathbbm{Q})$ as $\alpha/\beta$ for $\alpha,\beta\in\mathbb{Z}$, not both equal to $0$, and we always assume that $\mathrm{gcd}(\alpha,\beta)=1$; we set $1/0:=\infty$. Given a cusp $q=\alpha/\beta \in P_{\Gamma}$, we choose the scaling matrix for $q$ by $\sigma_q\coloneqq g_qa(w_q) \in \mathrm{PSL}_2(\mathbbm{R})$, where $g_q \coloneqq \begin{psmallmatrix} \alpha & * \\ \beta & *\end{psmallmatrix} \in \mathrm{PSL}_2(\mathbbm{Z})$ and $w_q \coloneqq [\mathrm{PSL}_2(\mathbbm{Z})_q : \Gamma_q]$. Let $U\subset\mathbbm{Z}$ be a set of representatives for $(\mathbbm{Z}/N\mathbbm{Z})^{\times} / \{\pm I\}$ containing $1\in\mathbbm{Z}$. Given $\xi \in U$, the element $0_{\xi} \coloneqq n(N-1)p(\xi)\gamma_N \infty \in \mathbbm{P}^1(\mathbbm{Q})$ defines a cusp of $\Gamma$, where $p(\xi) \coloneqq \begin{psmallmatrix} \xi & 1 \\ rN & \tilde{\xi} \end{psmallmatrix}$ such that $\xi\tilde{\xi} - rN = 1$ with $\tilde{\xi},r\in\mathbbm{Z}$, and $\gamma_N \coloneqq \begin{psmallmatrix}0&-1 \\ 1& 0 \end{psmallmatrix}$.

\subsection{Eisenstein series, scattering constants, and the Rankin--Selberg transform}
\label{subsect:Eisenstein Scattering RaSe}
For a given cusp $q\in P_{\Gamma}$ with scaling matrix $\sigma_q\in\mathrm{PSL}_2(\mathbbm{R})$, the non-holomorphic Eisenstein series associated to $q$ 
is defined by 
\begin{align*}
	E_{q}(z,s) \coloneqq 
\sum_{\gamma\in\Gamma_q\backslash\Gamma} \mathrm{Im}(\sigma_q^{-1}\gamma z)^s,
\end{align*}

where $z\in\mathbbm{H}$ and $s\in\mathbbm{C}$ with $\mathrm{Re}(s)>1$. The Eisenstein series
$E_{q}(z,s)$ is $\Gamma$-invariant in $z$, and holomorphic in $s$ for $\mathrm{Re}(s)>1$. Furthermore, $E_q(z,s)$ admits a meromorphic continuation to the complex $s$-plane with a simple pole at $s=1$ with residue equal to $v_{\Gamma}^{-1}$. Now, for $u\in U$, consider the subset $M(u)\subset\mathbbm{Z}^2$ given by
\begin{align}
\label{dfn:Subset M(u)}
	M(u) \coloneqq \{(m,n)\in\mathbbm{Z}^2 \mid (m,n)\equiv (0,u)\,\mathrm{mod}\,N\},
\end{align}

which is invariant under right multiplication by a matrix of $\Gamma$. Since we have $w_{\infty} = N$, $g_{\infty} = I$, and $\sigma_{\infty} = a(N)$, the Eisenstein series $E_{\infty}(z,s)$ associated to $\infty$ can be conveniently written as follows 
\begin{align*}%\label{eqn:Eisenstein prin_cong_subgrp}
	E_{\infty}(z,s) = \frac{1}{N^s}\sum_{u\in U} D_u(s)\bigg(\sum_{(m,n)\in M(u)} \frac{y^s}{|mz+n|^{2s}}\bigg),
\end{align*}

where $D_u(s)$ is the Dirichlet series defined by 
\begin{align}
\label{dfn:Dirichlet series Du(s)}
	D_u(s) \coloneqq \sum_{\substack{d=1 \\ du\equiv 1\,\mathrm{mod}\,N}}^{\infty}\frac{\mu(d)}{d^{2s}},
\end{align}

with $\mu(d)$ denoting the M\"obius function. At $s=1$, the series $D_u(s)$ admits the following Laurent expansion
\begin{align}
\label{expansion:sum of D_u(s)}
	\sum_{u\in U} D_u(s) = \frac{1}{\pi}\bigg(\frac{v_{\Gamma}}{N^3}\bigg)^{-1} + O(s-1).
\end{align}
\vskip 2mm

Let $q_1, q_2\in P_{\Gamma}$ be two cusps, not necessarily distinct, with scaling matrices $\sigma_{q_1},\sigma_{q_2}\in\mathrm{PSL}_2(\mathbbm{R})$, respectively. The scattering function $\varphi_{q_1q_2}(s)$ at the cusps $q_1$ and $q_2$ is defined by
\begin{align*}
     \varphi_{q_1q_2}(s) &\coloneqq \sqrt{\pi}\,\frac{\Gamma\left(s-\frac{1}{2}\right)}{\Gamma(s)} 
    \sum_{c=1}^{\infty}c^{-2s} 
    \,\bigg(\sum_{\substack{ d\,\mathrm{mod}\,c \\ 
    \begin{psmallmatrix}*&*\\c&d \end{psmallmatrix}\in\sigma_{q_1}^{-1} \Gamma \sigma_{q_2}}} 1 \bigg),
\end{align*}
where $s\in\mathbbm{C}$ with $\mathrm{Re}(s)>1$ and $\Gamma(s)$ denotes the Gamma function. The Eisenstein series admits the following Fourier expansion
\begin{align}
\label{id:Fourier Expansion Eisenstein}
E_{q_1}(\sigma_{q_2}z,s) = \delta_{q_1q_2}y^s + \varphi_{q_1q_2}(s)y^{1-s} + \sum_{n\neq 0}\varphi_{q_1q_2}(n;s)y^{1/2}K_{s-1/2}(2\pi|n|y)e^{2\pi{}inx},
\end{align}

where $\delta_{q_1q_2}$ is the Dirac's delta function, $K_{\mu}(Z)$ is the modified Bessel function of the second kind, and $$\varphi_{q_1q_2}(n;s) \coloneqq \frac{2\pi^s}{\Gamma(s)}|n|^{s-1/2} \sum_{c>0} c^{-2s} \bigg( \sum_{\substack{d\,\mathrm{mod}\,c \\ \begin{psmallmatrix} a&*\\c&d \end{psmallmatrix} \in \sigma_{q_1}^{-1}\Gamma\sigma_{q_2}}} e^{2\pi{}i(dm+an)/c}\bigg).$$ The scattering function $\varphi_{q_1q_2}(s)$ is holomorphic for $s\in\mathbbm{C}$ with $\mathrm{Re}(s)>1$ and admits a meromorphic continuation to the complex $s$-plane. At $s=1$, there is always a simple pole of $\varphi_{q_1q_2}(s)$ with residue equal to $v_{\Gamma}^{-1}$. The constant $\mathscr{C}_{q_1q_2}$ given by 
\begin{align}
\label{dfn:Scattering constant}
    \mathscr{C}_{q_1q_2} \coloneqq \lim_{s\rightarrow 1}\bigg( \varphi_{q_1q_2}(s) - \frac{v_{\Gamma}^{-1}}{s-1} \bigg)
\end{align}

is called the scattering constant at the cusps $q_1$ and $q_2$.
\vskip 2mm

Let $f:\mathbbm{H}\longrightarrow\mathbbm{C}$ be a $\Gamma$-invariant function which is of rapid decay at a cusp $q\in P_{\Gamma}$, i.e., the 0-th coefficient $a_0(y,q)$ in the Fourier expansion of $f(\sigma_q{}z)$ satisfies $a_0(y,q) = O(y^{-C})$ for all $C>0$, as $y\rightarrow\infty$. For $s\in\mathbbm{C}$ with $\mathrm{Re}(s)>1$, the Rankin--Selberg transform of $f$ at $q$ is given by the integral
\begin{align*}
	\mathcal{R}_q[f](s) \coloneqq \int_{\mathcal{F}_{\Gamma}}f(z) E_q(z,s) \mu_{\mathrm{hyp}}(z).
\end{align*}

The function $\mathcal{R}_q[f](s)$ possesses a meromorphic continuation to the complex $s$-plane having a simple pole at $s=1$, with residue equal to $(1/v_{\Gamma})\int_{\mathcal{F}_{\Gamma}}f(z)\mu_{\mathrm{hyp}}(z)$. Furthermore, we have the following useful identity
\begin{align*}
	\mathcal{R}_{q}[f](s) = \int_0^{\infty}a_0(y,q)y^{s-2}\mathrm{d}y.
\end{align*} 
\vskip 2mm

\subsection{The spectral expansion and automorphic kernels}
\label{subsect:Spectral Expansion}
Let $k$ be either 0 or 2 and fix it. Let $z\in\mathbbm{H}$ and $\gamma=\begin{psmallmatrix} a&b\\c&d\end{psmallmatrix}\in\mathrm{PSL}_2(\mathbbm{R})$. Given a function $f:\mathbbm{H} \longrightarrow \mathbbm{C}$, we define $f|[\gamma;k]$ by $$(f|[\gamma;k])(z) \coloneqq j_{\gamma}(z;k)^{-1}f(\gamma z),$$ where $j_{\gamma}(z;k) \coloneqq ((cz+d)/|cz+d|)^k$ is the weight-$k$ automorphy factor. A function $f:\mathbbm{H} \longrightarrow \mathbbm{C}$ is an automorphic function of weight $k$ with respect to $\Gamma$ if the equality $(f|[\gamma;k])(z) = f(z)$ holds for all $\gamma\in\Gamma$. Denote by $L^2(Y(N),k)$ the Hilbert space consisting of all automorphic functions of weight $k$ with respect to $\Gamma$ that are measurable and square integrable, endowed with the inner product given by $\langle f,g \rangle \coloneqq \int_{\mathcal{F}_{\Gamma}} f(z)\overline{g(z)} \mu_{\mathrm{hyp}}(z)$. The hyperbolic Laplacian of weight $k$ is defined by
\begin{align*}
	\Delta_{\mathrm{hyp},k} \coloneqq -y^2\bigg( \frac{\partial^2}{\partial{}x^2} + \frac{\partial^2}{\partial{}y^2}\bigg)+iky\frac{\partial}{\partial{}x}.
\end{align*}

Considering the uppering and lowering Maass operators of weight $k$, namely 
\begin{align}
\label{dfn:Uppering lowering operators}
	U_k \coloneqq \frac{k}{2} + iy\frac{\partial}{\partial{}x} + y \frac{\partial}{\partial{}y}, \qquad L_k \coloneqq \frac{k}{2} + iy\frac{\partial}{\partial{}x} - y \frac{\partial}{\partial{}y},
\end{align}

respectively, the following identities hold
\begin{align*}
	\Delta_{\mathrm{hyp},k} = L_{k+2}U_k-\frac{k}{2}\bigg(1+\frac{k}{2} \bigg) = U_{k-2}L_k + \frac{k}{2}\bigg(1-\frac{k}{2}\bigg).
\end{align*}

Since the hyperbolic Laplacian $\Delta_{\mathrm{hyp},k}$ of weight $k$ defines a symmetric and essentially self-adjoint operator, it extends to a unique self-adjoint operator $\Delta_k$ on a suitable domain. Consequently, there exists a countable orthonormal set $\{\psi_j\}_{j=0}^{\infty}$ of eigenfunctions of $\Delta_0$, in fact eigenfunctions of $\Delta_{\mathrm{hyp},0}$, such that for all $f\in L^2(Y(N),k)$, we have the spectral expansion 
\begin{align*}
%\label{id:spec decomp}
	f(z) = \sum_{j=0}^{\infty}\langle f,\psi_j\rangle \psi_j(z) + \frac{1}{4\pi}\sum_{q\in P_{\Gamma}}\int_{-\infty}^{\infty}\bigg\langle f,E_{q,k}\bigg(\cdot\,,\frac{1}{2}+ir\bigg) \bigg\rangle E_{q,k}\bigg(\cdot\,,\frac{1}{2}+ir\bigg) \mathrm{d}r,
\end{align*}

which converges in the norm topology; moreover, if $f$ is smooth and bounded, then the expansion converges uniformly on compacta of $\mathbbm{H}$. Here, $E_{q,k}(z,s)$ denotes the weight-$k$ Eisenstein series of $\Gamma$ at the cusp $q\in P_{\Gamma}$ (see, e.g., \cite[p.~291]{Roelcke}). For $k=0$, this is the Eisenstein series $E_{q}(z,s)$ from Section \ref{subsect:Eisenstein Scattering RaSe}, but if $k=2$, then $E_{q,2}(z,s)$ is determined by $E_{q}(z,s)$ via the uppering Maass operator $U_0$ given by \eqref{dfn:Uppering lowering operators}, namely 
\begin{align}
\label{id:Eisenstein between weights}
	U_0(E_{q}(z,s)) = s E_{q,2}(z,s),
\end{align}

where $s$ is not a pole of $E_{q}(z,s)$. In particular, using \eqref{id:Fourier Expansion Eisenstein} and \eqref{id:Eisenstein between weights}, the following Fourier expansion can be deduced 
\begin{align*}
%\label{id:Fourier expansion weight 2 Eisenstein}
	E_{\infty,2}(\sigma_{\infty}z,s) &= y^s + \varphi_{\infty\infty,2}(s)y^{1-s} \\[2mm] 
&+ \frac{y^{1/2}}{s} \sum_{n\neq 0} \bigg( \bigg(\frac{1}{2}-2\pi{}ny\bigg)K_{s-1/2}(2\pi|n|y) + y\frac{\partial}{\partial{}y}K_{s-1/2}(2\pi|n|y)\bigg) \varphi_{\infty\infty}(n;s)e^{2\pi{}inx}, \nonumber
\end{align*} 

where we have set $$\varphi_{\infty\infty,2}(s) \coloneqq \frac{1-s}{s}\varphi_{\infty\infty}(s).$$
\vskip 2mm

Next, we fix real numbers $T>0$ and $A>1$, once for all. Consider the holomorphic function $h_T(r)$ defined on the strip $|\mathrm{Im}(r)|<A/2$ by
\begin{align}
\label{dfn:h(r) test function}
	h_T(r) \coloneqq \exp\bigg(-T\bigg(\frac{1}{4}+r^2\bigg)\bigg).
\end{align}

Let $\phi_k$ denote the inverse Selberg--Harish--Chandra transform of weight $k$ of $h_T(r)$, namely, we have 
\begin{align*}
	\phi_k(x) \coloneqq -\frac{1}{\pi}\int_{-\infty}^{\infty}Q'(x+t^2)\left(\frac{\sqrt{x+1+t^2}-t}{\sqrt{x+1+t^2}+t}\right)^{k/2}\mathrm{d}t
\end{align*}

where $x\geq0$ and 
\begin{align*}
	Q\left(\frac{1}{4}(e^u + e^{-u}-2)\right) = \frac{1}{2}g(u), \quad g(u) = \frac{1}{2\pi}\int_{-\infty}^{\infty}h_T(r)e^{-iru}\mathrm{d}r.
\end{align*}

For $z,w\in\mathbbm{H}$, the  weight-$k$ point pair invariant $\pi_k(z,w)$ associated to $h_T(r)$ is given by
\begin{align*}
	\pi_k(z,w) \coloneqq \bigg(\frac{w-\overline{z}}{z-\overline{w}}\bigg)^{k/2}\phi_k(u(z,w)), 
\end{align*}

where $u(z,w) \coloneqq |z-w|^2/4\mathrm{Im}(z)\mathrm{Im}(w)$. With this, we define the weight-$k$ automorphic kernel by
\begin{align*}
	K_k(z,w) \coloneqq \sum_{\gamma \in \Gamma} j_{\gamma}(z;k)\pi_k(z,\gamma w).
\end{align*}

Now, let $\mathcal{S}_2(\Gamma)$ be the $\mathbbm{C}$-vector space of dimension $g_{\Gamma}$ consisting of cusp forms of weight 2 with respect to $\Gamma$, endowed with the Petersson inner product. Once for all, we fix an orthonormal basis $\{f_1, \ldots, f_{g_{\Gamma}}\}$ of $\mathcal{S}_2(\Gamma)$ and write $\lambda_j \coloneqq 1/4 + r_j^2$, with $r_j\in\mathbbm{C}$, for the corresponding eigenvalue of $\psi_j$. Then, the following spectral expansions hold
\begin{align}
\label{id:spec decom aut kernels}
\begin{split}
	K_0(z,w) &= \sum_{j=0}^{\infty}h_T(r_j)\psi_j(z)\overline{\psi}_j(w) + S_0(z,w), \\
	K_2(z,w) &= \sum_{j=1}^{g_{\Gamma}}\mathrm{Im}(z)\mathrm{Im}(w)f_j(z)\overline{f_j}(w) + \sum_{j=1}^{\infty}\frac{h_T(r_j)}{\lambda_j}(U_0\psi_j)(z)\overline{(U_0\psi_j)}(w) + S_2(z,w),
\end{split}
\end{align} 

where $U_0$ is the uppering Maass operator of weight 0 given by \eqref{dfn:Uppering lowering operators}, and 
\begin{align*}
%\label{dfn:spectral continuous part}
	S_k(z,w) \coloneqq \frac{2-k}{2}v_{\Gamma}^{-1} + \frac{1}{4\pi}\sum_{q\in P_{\Gamma}}\int_{-\infty}^{\infty}h_T(r)E_{q,k}\bigg(z,\frac{1}{2}+ir\bigg)\overline{E_{q,k}}\bigg(w,\frac{1}{2}+ir\bigg) \mathrm{d}r.
\end{align*}

In the sequel, we write $K_k(z)$ for $K_k(z,z)$ and $S_k(z)$ for $S_k(z,z)$. Letting $$\nu_k(\gamma;z) \coloneqq j_{\gamma}(z;k)\pi_k(z,\gamma z),$$ we have
$%\label{identity:automorphic kernel decomp}
	K_k(z) = K_k^{\mathrm{hyp}}(z) + K_k^{\mathrm{par}}(z)
$
with
$$
	K_k^{\mathrm{hyp}}(z) \coloneqq \sum_{\substack{\gamma\in\Gamma \\ |\mathrm{tr}(\gamma)|>2 }}\nu_k(\gamma;z), \quad
	K_k^{\mathrm{par}}(z) \coloneqq \sum_{\substack{\gamma\in\Gamma \\ |\mathrm{tr}(\gamma)| = 2 }} \nu_k(\gamma;z).
$$

Note that, by abuse of notation, the element $I$ is included in $K_{k}^{\mathrm{par}}(z)$. With this, we define 
\begin{align*}
	\mathcal{H}(z) & \coloneqq K_2^{\mathrm{hyp}}(z) - K_0^{\mathrm{hyp}}(z), \\[3mm]
	\mathcal{P}(z) & \coloneqq (K_2^{\mathrm{par}}(z) - S_2(z)) - (K_0^{\mathrm{par}}(z) - S_0(z)), \\
	\mathcal{T}(z) &\coloneqq \sum_{j=1}^{\infty}\frac{h_T(r_j)}{\lambda_j}|(U_0\psi_j)(z)|^2 - \sum_{j=0}^{\infty}h_T(r_j)|\psi_j(z)|^2.
\end{align*}

\subsection{The Arakelov metric and the Arakelov Green's function}
\label{subsect:Arakelov metric}
For $z\in \mathbbm{H}$, the Arakelov metric $F(z)$ is the $\Gamma$-invariant function defined on $\mathbbm{H}$ given by
\begin{align*}
%\label{dfn:Arakelov metric}
	F(z) \coloneqq \frac{\mathrm{Im}(z)^2}{g_{\Gamma}}\sum_{j=1}^{g_{\Gamma}}|f_j(z)|^2.
\end{align*}

It can be verified that $F(z)$ is of rapid decay at all cusps $q\in P_{\Gamma}$, and that $\int_{\mathcal{F}_{\Gamma}}F(z)\mu_{\mathrm{hyp}}(z) = 1$. Moreover, if $z\in\mathbbm{H}$ and $\alpha\in\mathrm{GL}_2^+(\mathbbm{Q})$ such that $\alpha^{-1}\Gamma\alpha = \Gamma$, then the identity $F(\alpha z) = F(z)$ holds. Following the lines of Abbes--Ullmo, one obtains the following key identity for $F(z)$. First, by definition, we have $$K_2(z) - K_0(z)= \mathcal{H}(z) + K_2^{\mathrm{par}}(z) - K_0^{\mathrm{par}}(z).$$ Using the spectral decompositions given by \eqref{id:spec decom aut kernels}, we then obtain
\begin{align*}
	K_2(z) - K_0(z) &= g_{\Gamma}F(z) + \mathcal{T}(z) + S_2(z) - S_0(z).  
\end{align*}

These two identities for the difference $K_2(z) - K_0(z)$ therefore yield the identity
\begin{align}
\label{id:spectral interpretation Arakelov metric}
	F(z) = \frac{1}{g_{\Gamma}}\bigg(-\mathcal{T}(z) + \mathcal{H}(z) + \mathcal{P}(z)\bigg).
\end{align} 

In the sequel, we refer to $\mathcal{H}(z)$ resp.~$\mathcal{P}(z)$ as the hyperbolic contribution and parabolic contribution of the Arakelov metric, respectively.
\vskip 2mm

The Rankin--Selberg constant of the Arakelov metric at $q$ is defined by
\begin{align}
\label{dfn:RankinSelberg constant}
	\mathscr{R}_{q} \coloneqq \lim_{s\rightarrow 1}\bigg( \mathcal{R}_q[F](s) - \frac{v_{\Gamma}^{-1}}{s-1}\bigg).
\end{align}

For $q=\infty$, taking the Rankin--Selberg transform on both sides of \eqref{id:spectral interpretation Arakelov metric} and grouping the constant terms of the Laurent expansion at $s=1$, we obtain the following identity
\begin{align}
\label{eqn:automorphic interpretation}
	\mathscr{R}_{\infty} = \frac{1}{g_{\Gamma}}\bigg( -\frac{v_{\Gamma}^{-1}}{2}\sum_{j=1}^{\infty}\frac{h_T(r_j)}{\lambda_j} + \mathscr{R}_{\infty}^{\mathrm{hyp}} + \mathscr{R}_{\infty}^{\mathrm{par}}\bigg).
\end{align} 

Here, $\mathscr{R}_{\infty}^{\mathrm{hyp}}$ resp.~$\mathscr{R}_{\infty}^{\mathrm{par}}$ 
%are given by \eqref{id:hyp and par contributions of R}. 
denotes the constant term in the Laurent expansion at $s=1$ of $\mathcal{R}_{\infty}[\mathcal{H}](s)$ and $\mathcal{R}_{\infty}[\mathcal{P}](s)$, respectively, i.e., we have 
\begin{align*}
%\label{id:hyp and par contributions of R}
	\begin{split}
	\mathscr{R}_{\infty}^{\mathrm{hyp}} &\coloneqq \lim_{s\rightarrow 1}\bigg(\mathcal{R}_{\infty}[\mathcal{H}](s) - \frac{v_{\Gamma}^{-1}}{s-1}\int_{\mathcal{F}_{\Gamma}}\mathcal{H}(z)\mu_{\mathrm{hyp}}(z)\bigg), \\
	\mathscr{R}_{\infty}^{\mathrm{par}} &\coloneqq \lim_{s\rightarrow 1}\bigg(\mathcal{R}_{\infty}[\mathcal{P}](s) - \frac{v_{\Gamma}^{-1}}{s-1}\int_{\mathcal{F}_{\Gamma}}\mathcal{P}(z)\mu_{\mathrm{hyp}}(z)\bigg).
	\end{split}
\end{align*}
\vskip 2mm

In the sequel, we refer to the constants $\mathscr{R}_{\infty}^{\mathrm{hyp}}$ and $\mathscr{R}_{\infty}^{\mathrm{par}}$ as the hyperbolic and parabolic contributions of $\mathscr{R}_{\infty}$, respectively.
\vskip 2mm

Next, the canonical volume form on $\mathbbm{H}$ is given by $\mu_{\mathrm{can}}(z) \coloneqq F(z) \mu_\mathrm{hyp}(z)$. Since $F(z)$ and $\mu_{\mathrm{hyp}}(z)$ are $\Gamma$-invariant on $\mathbbm{H}$, the canonical volume form $\mu_{\mathrm{can}}(z)$ is naturally defined on $Y(N)$. Furthermore, $\mu_{\mathrm{can}}(z)$ extends to a $(1,1)$-form on $X(N)$, since it remains smooth at the cusps $q\in P_{\Gamma}$. In addition, observe that $\int_{X(N)}\mu_{\mathrm{can}}(z)=1$. 
\vskip 2mm

Finally, the Arakelov Green's function $g_{\mathrm{Ar}}$ is the function defined on $X(N)\times X(N)$ which is smooth outside the diagonal and characterized by the following conditions
\begin{enumerate}[(i)]
\setlength\itemsep{0.5em}
	\item $\displaystyle \frac{1}{i\pi}\partial_z\partial_{\overline{z}}g_{\mathrm{Ar}}(z,w) = \mu_{\mathrm{can}}(z) - \delta_w(z)$,
	\item $\displaystyle \int_{X(N)}g_{\mathrm{Ar}}(z,w)\mu_{\mathrm{can}}(z) = 0$, for all $w\in X(N)$,
\end{enumerate}
where $\delta_w$ denotes the Dirac delta distribution. Given two different cusps $q_1,q_2\in P_{\Gamma}$, we have the following important identity (due to Abbes--Ullmo)
\begin{align}
\label{id:Abbes--Ullmo formula}
	g_{\mathrm{Ar}}(q_1,q_2) = -2\pi\mathscr{C}_{q_1q_2} - \frac{2\pi}{v_{\Gamma}} + 2\pi(\mathscr{R}_{q_1} + \mathscr{R}_{q_2}) + 2\pi\mathscr{G},
\end{align}

where $\mathscr{C}_{q_1q_2}$ is defined in \eqref{dfn:Scattering constant}, $\mathscr{R}_{q_j}$
is given in \eqref{dfn:RankinSelberg constant}, and 
\begin{align}
\label{def_mathscrG}
	\mathscr{G} \coloneqq -\int_{X(N)\times{}X(N)}g(z,w)\mu_{\mathrm{can}}(z) \mu_{\mathrm{can}}(w).
\end{align}

Here, the function $g(z,w)$ is the constant term in the Laurent expansion of the automorphic Green's function at $s=1$. More precisely, let $G_s(z,w)$
denote the automorphic Green's function of $\Gamma$ given, for 
$z,w\in\mathbbm{H}$, $z\not\equiv w \mod \Gamma$, and $s\in\mathbbm{C}$ with $\mathrm{Re}(s)>1$, by the series 
\begin{align*}
	G_s(z,w) = -\frac{1}{4\pi}\sum_{\gamma\in\Gamma}Q_{s-1}(1+2u(z,\gamma{}w)),
\end{align*} 

where $Q_{\tau}(\nu)$ denotes the associated Legendre function of the second kind. The automorphic Green's function $G_s(z,w)$ admits a meromorphic continuation to the whole $s$-plane
with a simple pole at $s=1$. At $s=1$, we have
\begin{align*}
G_s(z,w) = -\frac{v_{\Gamma}^{-1}}{s(s-1)} - \frac{1}{4\pi}g(z,w) + O_{z,w}(s-1),
\end{align*}

where $g(z,w)$ depends only on $z$ and $w$.
\vskip 2mm

%===================%
%== SECTION THREE ==%
%===================%
\section{The minimal regular model of the modular curve $X(N)$}

Let $K$ be a number field with ring of integers $\mathcal{O}_K$. We set $S\coloneqq \mathrm{Spec}(\mathcal{O}_K)$ and denote by $\eta$ the generic point of $S$. An arithmetic surface $\mathcal{X}/S$ is an integral, regular, and 2-dimensional $S$-scheme with a projective and flat structural morphism $\mathcal{X} \longrightarrow S$, such that the generic fiber $\mathcal{X}_{\eta}$ is a geometrically connected curve over $K$. 
\vskip 2mm

Given a smooth projective curve $X/K$ of genus $g\geq 1$ defined over $K$, there exists a unique (minimal) arithmetic surface $\mathcal{X}/S$ whose generic fiber $\mathcal{X}_{\eta}$ is isomorphic to $X/K$ \cite[Proposition 10.1.8]{Liu}. This arithmetic surface is called the minimal regular model of $X/K$. In particular, by the Riemann--Roch theorem, the compact Riemann surface $X(N)$ can be embedded in a projective space whose image is a smooth and projective algebraic curve $X(N)/\mathbbm{C}$. Since the algebraic curve $X(N)/\mathbbm{C}$ is in fact defined over the cyclotomic number field $\mathbbm{Q}(\zeta_N)$, there exists a minimal regular model of $X(N)/\mathbbm{Q}(\zeta_N)$.% which we denote by $\mathcal{X}(N)/\mathbbm{Z}[\zeta_N]$.
\vskip 2mm

To simplify notation, we will write $\mathcal{X}/S$ for the minimal regular model of $X(N)/\mathbbm{Q}(\zeta_N)$, where in this case $S = \mathrm{Spec}(\mathbbm{Z}[\zeta_N])$. Also, given an embedding $\sigma:\mathbbm{Q}(\zeta_N) \xhookrightarrow{} \mathbbm{C}$, we write $\mathcal{X}_{\eta,\sigma}$ for the base change $\mathcal{X}_{\eta} \otimes_{\sigma} \mathrm{Spec}(\mathbbm{C})$.
\vskip 2mm

Next, we will provide an explicit description of the fibers of the  arithmetic surface $\mathcal{X}/S$. To do so, let us briefly introduce the moduli interpretation of $\mathcal{X}/S$. 
\vskip 2mm

\subsection{The moduli problem of canonical structures on elliptic curves}
Let $N\geq 2$ be a given integer and set $\zeta_N \coloneqq e^{2\pi{}i/N}$. Denote by $\mu_N(\mathbbm{C})$ the group of the $N$-th roots of unity and by $Y(N)/\mathbbm{Q}(\zeta_N)$ the open subvariety of $X(N)/\mathbbm{Q}(\zeta_N)$ given by the image of $\overline{\Gamma}(N)\backslash\mathbbm{H}$ under the projective embedding that takes $X(N)$ to $X(N)/\mathbbm{C}$.
\vskip 2mm

An elliptic curve $E/T$ over an arbitrary scheme $T$ is a proper and smooth commutative group $T$-scheme with a given section, such that the geometric fibers are all connected and all have genus one. Given an elliptic curve $E/T$ over $T$, the subscheme $E[N]$ of the $N$-torsion points of $E/T$ is defined by $E[N]\coloneqq E \times_E T$, which is obtained by base change using the $N$-fold morphism $[N]: E \longrightarrow E$ and the given section of $E/T$. A canonical $\Gamma(N)$-structure on $E/T$ is a homomorphism of groups $\phi : (\mathbbm{Z}/N\mathbbm{Z})^2 \longrightarrow E[N](T)$ such that $e_N(\phi(1,0),\phi(0,1))=\zeta_N$ and the following identity of Cartier divisors holds $$\sum_{(a,b)\in (\mathbbm{Z}/N\mathbbm{Z})^2}[\phi(a,b)] = E[N];$$ here, $e_N$ denotes the Weil pairing on $E[N]$ and $[\phi(a,b)]$ is the effective Cartier divisor induced by the section $\phi(a,b)\in E[N](T)$.
\vskip 2mm

Let $R$ be a noetherian regular and excellent ring, and denote by $(\mathrm{Sch}/R)$ the category of $R$-schemes. In addition, we let $(\mathrm{Sets})$ be the category of sets. Let us consider the case when $R=\mathbbm{C}$. Given an elliptic curve $E/\mathbbm{C}$ over $\mathbbm{C}$, we have $E[N] \simeq (\mathbbm{Z}/N\mathbbm{Z})^2$, where the isomorphism is not canonical. Then, a canonical $\Gamma(N)$-structure on $E/\mathbbm{C}$ is an isomorphism $\phi : (\mathbbm{Z}/N\mathbbm{Z})^2 \longrightarrow E[N]$ which is compatible with the pairing $(\mathbbm{Z}/N\mathbbm{Z})^2 \times (\mathbbm{Z}/N\mathbbm{Z})^2 \longrightarrow \mu_N(\mathbbm{C})$ given by $((a,b),(c,d)) \longmapsto \zeta_N^{ad-bc}$ induced by the isomorphism $\phi$ and the Weil pairing on $E[N]$. Then, it turns out that the variety $Y(N)/\mathbbm{Q}(\zeta_N)$ represents the functor $\mathfrak{F} :(\mathrm{Sch}/\mathbbm{C}) \longrightarrow (\mathrm{Sets})$ which takes $T \in \mathrm{Ob}(\mathrm{Sch}/\mathbbm{C})$ to the set of isomorphism classes of pairs $(E/T,\phi)$, where $E/T$ is an elliptic curve over $T$ and $\phi$ is a canonical $\Gamma(N)$-structure on $E/T$.
\vskip 2mm

In the general case, N.~Katz and B.~Mazur proved that an analogous functor $\mathfrak{F} : (\mathrm{Sch}/\mathbbm{Z}[\zeta_N]) \longrightarrow (\mathrm{Sets})$ is representable by a flat, regular, and 2-dimensional $S$-scheme $\mathcal{Y}/S$, provided that $N\geq 3$ is a composite, odd, and square-free integer. Moreover, it turns out that the arithmetic surface $\mathcal{X}/S$ is the compactification of $\mathcal{Y}/S$. 

%obtained by the normalization of...\textcolor{red}{(check how is this construction in Katz-Mazur)}.% we have the following useful result. 

\begin{proposition}[Katz--Mazur]
\label{prop:Model description}
Let $N\geq 3$ be a composite, odd, and square-free integer, and let $\mathcal{X}/S$ be the minimal regular model of $X(N)/\mathbbm{Q}(\zeta_N)$. Then, the fiber $\mathcal{X}_{\mathfrak{p}}$, for $\mathfrak{p}\in S$  with $\mathfrak{p}\nmid N$ is a smooth curve, whereas for $\mathfrak{p}\mid p$ with $p\mid N$ a prime number, the fiber $\mathcal{X}_{\mathfrak{p}}$ consists of $p+1$ copies of smooth proper $\kappa(\mathfrak{p})$-curves intersecting transversally at their supersingular points. 
\end{proposition}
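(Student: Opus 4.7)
The strategy is to import the Katz--Mazur analysis of the moduli stack $\mathcal{Y}/S$ described just above the proposition, treat the good and bad primes separately, and finally pass from $\mathcal{Y}/S$ to its compactification $\mathcal{X}/S$ by examining the cusps. Since $N \geq 3$ is composite, odd, and square-free, the functor $\mathfrak{F}$ is representable by a flat, regular, $2$-dimensional $S$-scheme $\mathcal{Y}/S$, and $\mathcal{X}/S$ is obtained by adjoining the cuspidal sections coming from the generalized Tate curves.

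\emph{Good primes.} For $\mathfrak{p} \in S$ with $\mathfrak{p} \nmid N$, the group scheme $E[N]$ is finite étale over any base $T$ over $\kappa(\mathfrak{p})$, so the datum of a canonical $\Gamma(N)$-structure $\phi \colon (\mathbbm{Z}/N\mathbbm{Z})^2 \to E[N](T)$ is étale over the stack of elliptic curves. Hence the forgetful morphism $\mathcal{Y}_{\mathfrak{p}} \to \mathcal{M}_{1,1,\kappa(\mathfrak{p})}$ is étale, and since $\mathcal{M}_{1,1}$ is smooth of relative dimension one over $S$ away from $N$, the fiber $\mathcal{Y}_{\mathfrak{p}}$ is smooth of dimension one. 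The cusps of $X(N)$ extend to sections over $S[1/N]$ (via Tate curves with canonical level structure), and each cusp is smooth in the fiber, so $\mathcal{X}_{\mathfrak{p}}$ is smooth.

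\emph{Bad primes.} Suppose $\mathfrak{p} \mid p$ with $p \mid N$. Write $N = pM$ with $\gcd(p,M) = 1$, which is possible because $N$ is square-free. A canonical $\Gamma(N)$-structure factors uniquely as the datum of a canonical $\Gamma(p)$-structure and a canonical $\Gamma(M)$-structure, and the $\Gamma(M)$-part is étale over $\mathcal{M}_{1,1,\kappa(\mathfrak{p})}$ as above. So the local geometry of $\mathcal{X}_{\mathfrak{p}}$ at every point is controlled by the moduli problem $[\Gamma(p)]$ in characteristic $p$. The plan here is to invoke the explicit local analysis in Katz--Mazur (Chapter~13): on the ordinary locus, $E[p]^{\mathrm{loc}} \simeq \mu_p$ canonically, so a canonical $\Gamma(p)$-structure $\phi$ is determined by the image in $E[p]^{\mathrm{\acute{e}t}} \simeq \mathbbm{Z}/p\mathbbm{Z}$ of either basis vector, and this distinguishes $p+1$ disjoint ordinary components indexed by the $p+1$ lines in $(\mathbbm{Z}/p\mathbbm{Z})^2$ that parameterize which cyclic $p$-subgroup of $E[p]$ is hit by the multiplicative part of $\phi$; each component is smooth because the Igusa-type moduli problem is regular of dimension one. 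At a supersingular point, $E[p]$ is a local-local group scheme and the Frobenius/Verschiebung filtration collapses, so all $p+1$ types of structure specialize to a common point, giving the claimed intersection pattern.

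\emph{Transversality and passage to $\mathcal{X}$.} The last step is to show that the $p+1$ components meet transversally at each supersingular point. For this I would use Serre--Tate deformation theory at a supersingular elliptic curve $E_0/\overline{\kappa(\mathfrak{p})}$: the formal deformation ring of the pair $(E,\phi)$, with $\phi$ a canonical $\Gamma(p)$-structure of a given type, is described explicitly by Katz--Mazur as $W(\overline{\kappa(\mathfrak{p})})[[t]]$ with a single regular parameter cutting out that component inside the deformation ring of $E_0$ with full level-$N$ structure, and the $p+1$ parameters differ by units, which yields transversality. Finally, the cuspidal sections of $X(N)$ are smooth and disjoint from the supersingular locus, so they do not alter the picture, and the compactification $\mathcal{X}/S$ inherits the claimed fiber structure from $\mathcal{Y}/S$. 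The main obstacle in a fully self-contained write-up is precisely the supersingular local analysis; I would therefore quote the relevant theorems of Katz--Mazur rather than redo these formal computations.
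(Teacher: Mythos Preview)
The paper's own proof consists of the single line ``See \cite{KatzMazur}'', so your sketch is already more detailed than what appears there, and it correctly identifies the ingredients one finds in Katz--Mazur Chapter~13: \'etaleness of the level structure at good primes, the $p+1$ components indexed by lines in $(\mathbbm{Z}/p\mathbbm{Z})^2$ at a prime $p\mid N$, and the supersingular local analysis for transversality. Your caveat at the end is well placed: the transversality claim is exactly the step that genuinely requires the explicit completed-local-ring computations in Katz--Mazur, and your heuristic that ``the $p+1$ parameters differ by units'' would need to be replaced by the precise statement there, but since you already plan to quote Katz--Mazur for this, the proposal is sound and in line with the paper's approach.
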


\begin{proof}
See \cite{KatzMazur}.
\end{proof}

\subsection{The moduli interpretation of the closed subscheme of cusps}
The standard $N$-gon over a scheme $S'$ is the proper curve over $S'$ (meaning a morphism $C\longrightarrow S'$ which is separated, flat, finitely presented, and of pure relative dimension 1, with non-empty fibers) obtained from $\mathbbm{P}_{S'}^1 \times \mathbbm{Z}/N\mathbbm{Z}$ by gluing the $\infty$-section of $\mathbbm{P}_{S'}^1 \times \{j\}$ to the $0$-section of $\mathbbm{P}_{S'}^1 \times \{j+1\}$, for all $j\in\mathbbm{Z}/N\mathbbm{Z}$. 
\vskip 2mm

In \cite{Conrad}, B.~Conrad extended the notion of canonical $\Gamma(N)$-structures to generalized elliptic curves. Moreover, he proved that, for $N$ sufficiently large, the scheme $\mathcal{X}/S$ represents the functor $\mathfrak{F}' :(\mathrm{Sch}/\mathbbm{Z}[\zeta_N]) \longrightarrow (\mathrm{Sets})$ which takes $T \in \mathrm{Ob}(\mathrm{Sch}/\mathbbm{Z}[\zeta_N])$ to the set of isomorphism classes of pairs $(E/T,\phi)$, where $E/T$ is now a generalized elliptic curve over $T$ and $\phi$ is an extended canonical $\Gamma(N)$-structure on $E/T$. The advantage of this approach for $\mathcal{X}/S$ lies in the moduli interpretation of the closed subscheme of cusps 
\begin{align*}
	\mathrm{Cusps}(N) \coloneqq (\mathcal{X}\, \backslash \, \mathcal{Y})^{\mathrm{red}},
\end{align*}

namely, the closed subscheme $\mathrm{Cusps}(N)$ corresponds to the extended canonical $\Gamma(N)$-structures on the Tate curve over $\mathbbm{Z}[[q^{1/N}]]$. In practice, this amounts to describe an extended canonical $\Gamma(N)$-structures on the standard $N$-gon.
\vskip 2mm

\begin{proposition}
\label{prop:iota iso}
Let $N\geq 3$ be a composite, odd, and square-free integer, and let $0,\infty \in P_{\Gamma}$ be cusps regarded as $\mathbbm{Q}(\zeta_N)$-rational points of $\mathcal{X}_{\eta}$. Given an embedding $\sigma:\mathbbm{Q}(\zeta_N) \xhookrightarrow{} \mathbbm{C}$ such that $\sigma(\zeta_N) = e^{2\pi{}iv/N}$ with $v\in(\mathbbm{Z}/N\mathbbm{Z})^{\times}/\{\pm 1\}$, let $0^{\sigma}$, $\infty^{\sigma}$ be the corresponding points in $\mathcal{X}_{\eta,\sigma}(\mathbbm{C})$ of $0,\infty$ under the embedding $\sigma$, respectively. Then, there exists an isomorphism $$\iota_{\sigma} : \mathcal{X}_{\eta,\sigma} \longrightarrow X(N)/\mathbbm{C}$$ such that, on complex points, we have $\iota_{\sigma}(0^{\sigma}) = 0_{\xi}$ for some $\xi\in U$, and $\iota_{\sigma}(\infty^{\sigma}) = \infty$.
\end{proposition}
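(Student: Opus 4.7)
The plan is to exploit the moduli interpretation of $\mathcal{X}/S$ via extended canonical $\Gamma(N)$-structures on generalized elliptic curves, as recalled before the statement, and to compare the resulting algebraic description of the cusps $0,\infty\in\mathcal{X}_\eta(\mathbbm{Q}(\zeta_N))$ with the classical analytic description of $X(N)$ as $\Gamma\backslash\mathbbm{H}^*$. The Tate curve over $\mathbbm{Z}[[q^{1/N}]]$ is a generalized elliptic curve whose special fibre is the standard $N$-gon, so that by the Katz--Mazur/Conrad representability theorem the cusps of $\mathcal{X}_\eta$ correspond to extended canonical $\Gamma(N)$-structures on the standard $N$-gon. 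Explicitly, the cusp $\infty$ corresponds to the ``tautological'' structure $\phi_\infty:(1,0)\mapsto q^{1/N}$, $(0,1)\mapsto \zeta_N$, whereas the cusp $0$ corresponds to the structure obtained by precomposing with the Fricke involution $\gamma_N=\left(\begin{smallmatrix}0&-1\\1&0\end{smallmatrix}\right)$, i.e.\ by swapping the roles of $(1,0)$ and $(0,1)$ up to a sign.

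Next, I would make explicit the analytic comparison. For an embedding $\sigma:\mathbbm{Q}(\zeta_N)\hookrightarrow\mathbbm{C}$ with $\sigma(\zeta_N)=e^{2\pi iv/N}$, the base change $\mathcal{X}_{\eta,\sigma}$ parameterises pairs $(E/\mathbbm{C},\phi)$ where $\phi$ is a canonical $\Gamma(N)$-structure relative to the specific primitive root $e^{2\pi iv/N}$ in the Weil pairing condition. The analytic uniformisation $\mathbbm{H}\to Y(N)$ sending $\tau\mapsto(\mathbbm{C}/(\mathbbm{Z}+\mathbbm{Z}\tau),\phi_\tau)$, with $\phi_\tau(a,b)=(a\tau+b)/N$, instead yields a Weil pairing equal to $e^{2\pi i/N}$. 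To obtain an isomorphism $\iota_\sigma$ one must therefore twist the tautological $\Gamma(N)$-structure by an element $M\in\mathrm{SL}_2(\mathbbm{Z}/N\mathbbm{Z})$ adjusting the Weil pairing from $e^{2\pi i/N}$ to $e^{2\pi iv/N}$. Choosing $\xi\in U$ with $\xi\equiv\pm v\,(\mathrm{mod}\,N)$ and lifting to $p(\xi)=\left(\begin{smallmatrix}\xi&1\\rN&\tilde\xi\end{smallmatrix}\right)\in\mathrm{PSL}_2(\mathbbm{Z})$, one verifies that $M=p(\xi)\,\mathrm{mod}\,N$ does the job, up to the innocuous translation factor $n(N-1)$ which absorbs the discrepancy between the two affine normalisations of the $N$-torsion.

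Granted this choice of $M$, one defines $\iota_\sigma$ on points by sending $(E,\phi)\in\mathcal{X}_{\eta,\sigma}(\mathbbm{C})$ to the analytic class of $(E,\phi\circ M^{-1})$, viewed in $\Gamma\backslash\mathbbm{H}^*=X(N)/\mathbbm{C}$. To compute the images of the two distinguished cusps I would then trace $\phi_\infty$ and $\phi_0=\phi_\infty\circ\gamma_N$ through this twist. The cusp $\infty^\sigma$ corresponds to the tautological structure on the Tate curve, for which the diagonal action of $M$ can be reabsorbed by the automorphism of the $N$-gon scaling the $q$-parameter, so $\iota_\sigma(\infty^\sigma)=\infty$ regardless of $\sigma$; this reflects the fact that the cusp $\infty$ is defined over $\mathbbm{Q}$ and is Galois-invariant. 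For the cusp $0^\sigma$, the twist $M^{-1}$ no longer collapses, and the analytic cusp representative becomes the matrix word $n(N-1)\,p(\xi)\,\gamma_N$ applied to $\infty$, which by definition is the cusp $0_\xi$ in $P_\Gamma$.

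The technical heart of the argument, and the step I expect to demand the most care, is verifying that the element $n(N-1)\,p(\xi)$ is indeed the correct lift of the Weil-pairing twist: the Galois action of $\mathrm{Gal}(\mathbbm{Q}(\zeta_N)/\mathbbm{Q})\simeq(\mathbbm{Z}/N\mathbbm{Z})^\times$ on $\mathrm{Cusps}(N)$, as described by Katz--Mazur through its action on canonical $\Gamma(N)$-structures, must be matched on the analytic side with the prescribed matrix. This requires working modulo $N$ in $\mathrm{SL}_2$, lifting to $\mathrm{PSL}_2(\mathbbm{Z})$ in a way compatible with the coset representatives chosen for $\Gamma$, and checking compatibility with the $\{\pm I\}$-quotient that defines $U$. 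Once this bookkeeping is done, the existence of $\iota_\sigma$ with the required behaviour on $0^\sigma$ and $\infty^\sigma$ follows directly from the moduli identification.
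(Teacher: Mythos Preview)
Your overall strategy---use the moduli description of $\mathcal{X}_{\eta,\sigma}$, define $\iota_\sigma$ by twisting the level structure, then track the Tate-curve cusps through the twist---matches the paper's approach. However, there is a genuine conceptual error in the heart of your construction.

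You write that one must ``twist the tautological $\Gamma(N)$-structure by an element $M\in\mathrm{SL}_2(\mathbbm{Z}/N\mathbbm{Z})$ adjusting the Weil pairing from $e^{2\pi i/N}$ to $e^{2\pi iv/N}$,'' and then propose $M\equiv p(\xi)\pmod{N}$. But if $\phi$ is a level structure and $M\in\mathrm{GL}_2(\mathbbm{Z}/N\mathbbm{Z})$, then $e_N\bigl(\phi(M(1,0)),\phi(M(0,1))\bigr)=e_N\bigl(\phi(1,0),\phi(0,1)\bigr)^{\det M}$; an element of $\mathrm{SL}_2$ therefore \emph{preserves} the Weil-pairing value and cannot carry $e^{2\pi i/N}$ to $e^{2\pi iv/N}$ for $v\not\equiv\pm 1$. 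Since $p(\xi)\in\mathrm{PSL}_2(\mathbbm{Z})$ has determinant $1$, your proposed $M$ does not do the job you claim, and the map $\iota_\sigma$ as you define it does not land in $X(N)/\mathbbm{C}$ (whose points have Weil pairing $e^{2\pi i/N}$).

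The paper resolves this by using a matrix in $\mathrm{GL}_2(\mathbbm{Z}/N\mathbbm{Z})$ of determinant $v'$ (an inverse of $v$ mod $N$): concretely $\iota_\sigma$ is defined on points by $(E;P,Q)\mapsto(E;P+Q,v'Q)$, which corresponds to $\left(\begin{smallmatrix}1&0\\1&v'\end{smallmatrix}\right)$ and rescales the Weil pairing by $v'$ as required. The verification that $\iota_\sigma(\infty^\sigma)=\infty$ is then a short explicit computation on the standard $N$-gon, using the automorphism $(a,z)\mapsto(a,z\cdot e^{-2\pi iv/N})$ of the Tate curve to normalise; the computation for $0^\sigma$ is analogous. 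Once you replace your $\mathrm{SL}_2$ twist by this $\mathrm{GL}_2$ twist, the rest of your outline (and in particular the appearance of $p(\xi)$ and $n(N-1)$, which enter only when identifying the resulting level structure with a specific cusp representative in $P_\Gamma$, not in the definition of $\iota_\sigma$ itself) can be made to go through.
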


\begin{proof}
First of all, note that, a point in $\mathcal{X}_{\eta,\sigma}(\mathbbm{C})$ corresponds to a triple $(E;P,Q)$, where $E$ is a generalized elliptic curve over $\mathbbm{C}$ and $P,Q$ is a basis of $E^{\mathrm{sm}}[N]$ such that $e_N(P,Q)=e^{2\pi{}iv/N}$. Then, $\iota_{\sigma}$ is given by $$(E;P,Q) \longmapsto (E,P+Q,v'Q),$$ on complex points.
\vskip 2mm

Secondly, for a given $\begin{psmallmatrix}a&b\\c&d \end{psmallmatrix} \in \mathrm{SL}_2(\mathbbm{Z}/N\mathbbm{Z})$, the map $$\varphi_{\begin{psmallmatrix}a&b\\c&d \end{psmallmatrix}} : (\mathbbm{Z}/N\mathbbm{Z})^2 \longrightarrow \mathbbm{Z}/N\mathbbm{Z} \times \mu_N(\mathbbm{C}),$$ defined by $(1,0)\longmapsto (a,\zeta_N^b)$ and $(0,1)\longmapsto(c,\zeta_N^d)$, describes explicitly how cusps corresponds to a $\Gamma(N)$-structure on the Tate curve. Thus, the map $\varphi_{\begin{psmallmatrix}1&0\\0&1 \end{psmallmatrix}}$ takes $(1,0)\longmapsto P_0$ and $(0,1)\longmapsto Q_0$ with $P_0 = (1,1)$ and $Q_0 = (0,1)$, and the moduli interpretation of the cusp $\infty$ is $(\mathrm{Tate}_0/\mathbbm{Q}(\zeta_N); P_0,Q_0)$. Similarly, $\varphi_{\begin{psmallmatrix}1&0\\v'&1 \end{psmallmatrix}}$ takes $(1,0)\longmapsto P_0$ and $(0,1)\longmapsto (v',\zeta_N)$, and the moduli interpretation of $1/v'$ is $(\mathrm{Tate}_0/\mathbbm{Q}(\zeta_N); P_0,(v',\zeta_N))$.
\vskip 2mm

Finally, we have 
\begin{align*}
		\iota_{\sigma}(\infty^{\sigma}) &= (\mathrm{Tate}_0; P_0+(0,e^{2\pi{}iv/N}), v'(0,e^{2\pi{}iv/N})) \\
		&= (\mathrm{Tate}_0; (1,e^{2\pi{}iv/N}),(0,e^{2\pi{}i/N})) \\
		&= (\mathrm{Tate}_0; (1,1), (0,e^{2\pi{}i/N})) = (\mathrm{Tate}_0; P_0, Q_0) = \infty,
	\end{align*}	

where in the third equality we used the automorphism $(a,z)\longmapsto(a,ze^{-2\pi{}iv/N})$ of the Tate curve. A similar reasoning can applied for the cusp 0, using the other automorphism of the Tate curve. This concludes the proof.
\end{proof}

%===================%
%== SECTION FOUR  ==%
%===================%
\section{The hyperbolic contribution of $\mathscr{R}_{\infty}$}

Recall from identity \eqref{eqn:automorphic interpretation} that $\mathscr{R}_{\infty}^{\mathrm{hyp}}$ is the hyperbolic contribution of the Rankin--Selberg constant $\mathscr{R}_{\infty}$ at $\infty$. In this part, we establish an asymptotics for $\mathscr{R}_{\infty}^{\mathrm{hyp}}$ as $T\rightarrow\infty$, for a given level $N$ (see Proposition \ref{prop:hyperbolic contribution}). To do so, in Section \ref{sect:4.1}, we first determine the residue at $s=1$ of certain zeta functions associated to hyperbolic elements of $\Gamma$ (see Proposition \ref{id:identity of zeta functions}). Next, in Section \ref{sect:4.2}, in the proof of Proposition \ref{prop:hyperbolic contribution}, we obtain an explicit identity for $\mathcal{R}_{\infty}[\mathcal{H}](s)$ in terms of the aforementioned zeta functions; thus, we derive an identity for $\mathscr{R}_{\infty}^{\mathrm{hyp}}$ from the Laurent expansion of $\mathcal{R}_{\infty}[\mathcal{H}](s)$ at $s=1$ in which the residues at $s=1$ of the previous zeta functions arise.
%\textcolor{blue}{
%In Section \ref{sect:4.1}, we first define a zeta function for a pair $(\gamma,u)$ consisting of an hyperbolic element $\gamma$ of $\Gamma$ and $u\in U$. Then, we prove that this function is equal to a partial zeta funcion of a suitable narrow-ray ideal class, up to an holomorphic factor (see Proposition \ref{id:identity of zeta functions}). Next, in Section \ref{sect:4.2}, we establish an asymptotics for $\mathscr{R}_{\infty}^{\mathrm{hyp}}$ by explicitly computing $\mathcal{R}_{\infty}[\mathcal{H}](s)$ (see Proposition \ref{prop:hyperbolic contribution}). For this computation, the integrals $J_k^+$ and $J_k^-$, and the results of Lemma \ref{lem:aux lemma hyp contrib} will be useful.
%}

\subsection{The zeta function associated to $\Gamma$}
\label{sect:4.1}
For this section, let us consider the following settings. Let $l$ be a fixed integer such that $|l|>2$ and set $\Delta \coloneqq l^2-4$. Denote by $\mathcal{Q}_{\Delta}$ the set of binary quadratic forms of discriminant $\Delta$ whose coefficients lie in $\mathbbm{Z}$. Given $f(x,y)\in\mathcal{Q}_{\Delta}$, we will write $f\cdot(x,y) \coloneqq f(y,-x)$. 
\vskip 2mm

Consider the sets 
\begin{align*}
	\mathrm{sp}_l &\coloneqq \{\gamma \in \mathrm{PSL}_2(\mathbbm{Z}) \,\mid\, \mathrm{tr}(\gamma) = l  \} \\[2mm]
\mathrm{sp}_l(\Gamma) &\coloneqq \{\gamma\in\Gamma \,|\, \mathrm{tr}(\gamma) = l \} \subset \mathrm{sp}_l.
\end{align*}

Observe that an element $\gamma\in\mathrm{sp}_l(\Gamma)$ has the form $\gamma = \gamma_l(a,b,c)$, where 
\begin{align}
\label{def:gamma_l(a,b,c)}
	\gamma_l(a,b,c) \coloneqq \begin{pmatrix} \frac{l-bN}{2} & -cN \\ aN & \frac{l+bN}{2}\end{pmatrix}
\end{align}

with $a,b,c\in\mathbbm{Z}$, $a>0$, $\mathrm{gcd}(a,b,c) = 1$, $(l\pm bN)/2 \equiv 1\,\mathrm{mod}\, N$, and $b^2-4ac$ is not a square. From now on, we assume that $\gamma\in\mathrm{sp}_l(\Gamma)$ has always the form given by \eqref{def:gamma_l(a,b,c)}. Thus, for a given $\gamma\in\mathrm{sp}_l(\Gamma)$, we set $D\coloneqq b^2-4ac$, and also
\begin{align}
\label{datum:asociated objects gamma}
	L\coloneqq \mathbbm{Q}(\sqrt{D}), \qquad \theta \coloneqq -\frac{b+\sqrt{D}}{2a}, \qquad \mathfrak{b} \coloneqq \mathbbm{Z} + \mathbbm{Z}\theta.
\end{align}

Further, if $\mathcal{O}_L$ is the ring of integers of $L$ and $\mathfrak{f}\subset\mathcal{O}_L$ is an integral ideal, we write $U(\mathfrak{f}) \coloneqq \mathcal{O}_L^{\times} \cap (1+\mathfrak{f})$, and $U_+(\mathfrak{f})$ for the totally positive elements of $U(\mathfrak{f})$, i.e., we have $$U_+(\mathfrak{f}) = \{\varepsilon \in U(\mathfrak{f}) \,|\, N(\varepsilon) > 0\},$$ where $N(\cdot)$ denotes the norm of an element in $L$. For the sequel, we set $\mathfrak{f} \coloneqq N\mathcal{O}_L$.
\vskip 2mm

It is well-known that the assignment $$\begin{pmatrix}a&b\\c&d \end{pmatrix} \longmapsto cx^2 + (d-a)xy -by^2$$ defines a bijection $\beta_l : \mathrm{sp}_l \overset{\sim}{\longrightarrow} \mathcal{Q}_{\Delta}$. For convenience, we will write $\mathcal{Q}_{\Delta}(\Gamma) \coloneqq \beta_l(\mathrm{sp}_l(\Gamma))$, and also $f_{\gamma} \coloneqq \beta_l(\gamma)$ and $\gamma_f \coloneqq \beta_l^{-1}(f)$, for $\gamma\in\mathrm{sp}_l(\Gamma)$ and $f(x,y)\in\mathcal{Q}_{\Delta}(\Gamma)$, respectively. Furthermore, $(f \circ \gamma)(x,y) \coloneqq f((x\,y)\gamma^{\mathrm{t}})$ defines an action of the modular group $\mathrm{PSL}_2(\mathbbm{Z})$ on $\mathcal{Q}_{\Delta}$, which descents to an action of $\Gamma$ on $\mathcal{Q}_{\Delta}(\Gamma)$. 
\vskip 2mm
%Therefore, we have $f_{\gamma} = N g_{\gamma}$ with $g_{\gamma}(x,y) = ax^2 + bxy + cy^2$. 

Let $\gamma=\gamma_l(a,b,c)\in\mathrm{sp}_l(\Gamma)$. Denote by $\Gamma_{f_{\gamma}}$ the stabilizer of $f_{\gamma}(x,y)\in\mathcal{Q}_{\Delta}(\Gamma)$ under the action of $\Gamma$. Since $f_{\gamma} = N g_{\gamma}$ with $g_{\gamma}(x,y) = ax^2 + bxy + cy^2$, we have $\Gamma_{f_{\gamma}} = \Gamma_{g_{\gamma}}$. Therefore, it can be verified that
\begin{align*}
	\Gamma_{f_{\gamma}} = \bigg\{ \begin{pmatrix}\frac{t-bv}{2} & -cv \\ av & \frac{t+bv}{2} \end{pmatrix} \, \bigg| \, t,v\in\mathbbm{Z}, \, t^2-Dv^2 =4, \, v\equiv 0\,\mathrm{mod}\,N, \, \frac{t-v}{2} \equiv 1\,\mathrm{mod}\,N	\bigg\}.
\end{align*}

Now, given $u\in U$ and $\gamma\in\mathrm{sp}_l(\Gamma)$, define $$M_u^{f_{\gamma}} \coloneqq \{ (m,n) \in M(u)  \mid  f_{\gamma}\cdot(m,n) > 0 \},$$ 
where $M(u)$ is given by \eqref{dfn:Subset M(u)}. We claim that the group $\Gamma_{f_{\gamma}}$ 
acts on $M_u^{f_{\gamma}}$ by right multiplication. Indeed, if $(m',n') = (m,n)\delta$ with 
$(m,n)\in M_u^{f_{\gamma}}$ and $\delta \in \Gamma_{f_{\gamma}}$, then it is easy to verify 
that $(m',n')\in M(u)$, and since $$f_{\gamma}(n',-m') = f_{\gamma}((n\,-m)(\delta^{-1})^{\mathrm{t}}) = (f_{\gamma}\circ\delta^{-1})(n,-m) = f_{\gamma}(n,-m) > 0,$$ 
the claim follows. 
\vskip 2mm

Let $\overline{M}_u^{f_{\gamma}}$ be the set of $\Gamma_{f_{\gamma}}$-orbits of $M_u^{f_{\gamma}}$, i.e., we have $\overline{M}_u^{f_{\gamma}} \coloneqq M_u^{f_{\gamma}} / \Gamma_{f_{\gamma}}$.  

\begin{lemma}
\label{lem:important bijections}
Let $u\in U$ and $\gamma\in\mathrm{sp}_l(\Gamma)$. The assignments $$u/N + n' + m'\theta \longmapsto (Nm',u+Nn') \quad \text{and} \quad \begin{pmatrix} \frac{t-bv}{2} & -cv \\ av & \frac{t+bv}{2} \end{pmatrix} \longmapsto \frac{t+v\sqrt{D}}{2}$$ define bijections $(u/N + \mathfrak{b})_+ \longrightarrow  M_u^{f_{\gamma}}$ and $\Gamma_{f_{\gamma}} \longrightarrow U_{+}(\mathfrak{f})$, respectively. Here, the set $(u/N + \mathfrak{b})_+$ consists of the elements $\xi \in u/N + \mathfrak{b}$ such that $N(\xi) > 0$. In consequence, we have 
\begin{align*}
	\overline{M}_{u}^{f_{\gamma}} \simeq (u/N + \mathfrak{b})_+ / U_+(\mathfrak{f}),
\end{align*}

where the action in the right hand side is given by $\varepsilon\cdot(u/N + n + m\theta) = u/N + \varepsilon(n+m\theta)$.
\end{lemma}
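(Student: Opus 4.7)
The plan is to establish the two bijections separately and then verify that they intertwine the right action of $\Gamma_{f_{\gamma}}$ on $M_u^{f_{\gamma}}$ with the multiplicative action of $U_+(\mathfrak{f})$ on $(u/N+\mathfrak{b})_+$; passing to quotients then yields the desired isomorphism.

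For the first map, every $(m,n) \in M(u)$ has a unique presentation $(m,n) = (Nm', u+Nn')$ with $m', n' \in \mathbbm{Z}$, so the assignment $(m,n) \mapsto \xi := u/N + n' + m'\theta$ is automatically a bijection of $M(u)$ with the translate $u/N + \mathfrak{b}$. To match positivity I would use the defining relation $a\theta^2 + b\theta + c = 0$ together with $N(\xi) = \xi \bar{\xi}$; a short direct calculation then gives $f_{\gamma}(n,-m) = N^3 a\, N(\xi)$, and since $a>0$ this shows the condition $f_{\gamma}\cdot(m,n)>0$ is equivalent to $\xi \in (u/N+\mathfrak{b})_+$.

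For the second map, to $\delta$ with parameters $(t,v)$ I would associate $\varepsilon := (t+v\sqrt{D})/2$. Since $\varepsilon$ satisfies $X^2 - tX + 1 = 0$, it lies in $\mathcal{O}_L$ with $N(\varepsilon)=1$, hence $\varepsilon \in \mathcal{O}_L^{\times}$; passing to $\mathrm{PSL}_2$-representatives with $t>0$ makes $\varepsilon$ totally positive. The congruences $v \equiv 0 \bmod N$ and $(t-v)/2 \equiv 1 \bmod N$ translate into $\varepsilon - 1 \in N\mathcal{O}_L = \mathfrak{f}$, so $\varepsilon \in U_+(\mathfrak{f})$. The inverse recovers $(t,v)$ from $\varepsilon$ via $t=\varepsilon+\bar{\varepsilon}$ and $v=(\varepsilon-\bar{\varepsilon})/\sqrt{D}$, and one checks that the matrix built from these parameters lies in $\Gamma_{f_{\gamma}}$.

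For the equivariance, I would rewrite $\sqrt{D}=-2a\theta-b$ to get $\varepsilon = (t-bv)/2 - av\theta$, and then, using $a\theta^2=-b\theta-c$, also $\varepsilon\theta = cv + ((t+bv)/2)\theta$. Expanding $\varepsilon \cdot (n'+m'\theta)$ in the basis $(1,\theta)$ produces exactly the coefficients of the new pair $(m,n)\delta$ after applying the identification of the first step, while the summand $u/N$ is preserved by the definition of the action. Passing to the $U_+(\mathfrak{f})$-quotient then yields $\overline{M}_u^{f_{\gamma}} \simeq (u/N+\mathfrak{b})_+/U_+(\mathfrak{f})$. The main technical point I would treat most carefully is the translation of the congruences on $(t,v)$ into $\varepsilon \in 1+N\mathcal{O}_L$, since this requires relating an explicit $\mathbbm{Z}$-basis of $\mathcal{O}_L$ to the form $(t+v\sqrt{D})/2$; the verification splits into cases according to whether $D$ is congruent to $0$ or $1$ modulo $4$, but is ultimately routine given that $N$ is odd.
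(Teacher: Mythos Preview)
Your argument is correct and, for the first bijection, it is exactly the paper's proof: both reduce to the single norm computation $f_{\gamma}(n,-m)=aN^{3}\,N(\xi)$ with $\xi=u/N+n'+m'\theta$, which the paper carries out line by line. For the second bijection the paper does not argue at all but simply cites \cite[Proposition~4.2.11]{Grados}; your direct verification via $\varepsilon^{2}-t\varepsilon+1=0$, the norm, and the translation of the congruences on $(t,v)$ into $\varepsilon\in 1+N\mathcal{O}_L$ is therefore more self-contained than what the paper actually writes. The equivariance check you add is likewise something the paper leaves entirely implicit.

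One small caution on that last step: if you carry out the computation exactly as you describe, you will find that right multiplication of $(m,n)$ by $\delta$ corresponds to multiplication of the full element $\xi=(n+m\theta)/N$ by the conjugate $\bar{\varepsilon}=(t-v\sqrt{D})/2$, not to the map $u/N+\varepsilon(n'+m'\theta)$ acting only on the $\mathfrak{b}$-part via $\varepsilon$. This is harmless for the quotient statement, since $\varepsilon\mapsto\bar{\varepsilon}=\varepsilon^{-1}$ is an automorphism of $U_{+}(\mathfrak{f})$ and the two candidate actions have the same orbits, but it is worth getting the signs straight when you write out the details.
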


\begin{proof}
In both cases, the injectivity and surjectivity are immediate. It remains to prove the well-definition of the maps. For the first assignment, note that if $\xi = u/N + n' + m'\theta$, then 
\begin{align*}
	\xi\overline{\xi} &= \left(\frac{u}{N}+n'\right)^2 + \left(\frac{u}{N}+n'\right)\frac{b}{a}m' + \frac{c}{a}(m')^2 \\ &= \frac{1}{aN^2}\left( a(u+Nn')^2 + b(u+Nn')(Nm') + c(Nm')^2 \right) \\ &=\frac{1}{aN^3}f_{\gamma}(u+Nn',-Nm').
\end{align*}

In the last equality, we used the fact that $f_{\gamma}(x,y) = N g_{\gamma}(x,y)$ with $g_{\gamma}(x,y) = ax^2 + bxy + cy^2$. Since $N(\xi) = \xi\overline{\xi}>0$, we have $f_{\gamma}(u+Nn',-Nm') > 0$; hence, $(Nm',u+Nn')\in M_u^{f_{\gamma}}$.
\vskip 2mm

For the well-definition of the second assignment, we refer the reader to \cite[Proposition 4.2.11]{Grados}. This concludes the proof of the lemma.
\end{proof}

Next, consider a pair $(\gamma,u)$ with $\gamma\in\mathrm{sp}_l(\Gamma)$ and $u\in U$. For $s\in\mathbbm{C}$ with $\mathrm{Re}(s)>1$, the zeta function associated to $(\gamma,u)$ is defined by 
\begin{align*}
\zeta_{\gamma,u}(s) \coloneqq \sum_{(m,n)\in \overline{M}_u^{f_{\gamma}}} f_{\gamma}(n,-m)^{-s}.
\end{align*}

We claim that the function $\zeta_{\gamma,u}(s)$ possesses a meromorphic continuation to the complex plane having a pole at $s=1$ (see Proposition \ref{id:identity of zeta functions} below). To prove this claim, we recall here the definition of a partial zeta function as well as some of its main properties.
\vskip 2mm

Consider the ray $\mathfrak{m}\coloneqq \mathfrak{m}_{\infty}\mathfrak{f}$, where $\mathfrak{m}_{\infty}$ denotes the ray consisting of all embeddings $L\hookrightarrow \mathbbm{R}$, and denote by $Cl_{\mathfrak{m}}(L)$ the narrow-ray ideal class group of $L$. Given $\mathfrak{C}\in Cl_{\mathfrak{m}}(L)$ and $s\in\mathbbm{C}$ with $\mathrm{Re}(s)>1$, the partial zeta function $\zeta(s,\mathfrak{C})$ is defined by $$\zeta(s,\mathfrak{C}) \coloneqq \sum_{\substack{\mathfrak{c}\in\mathfrak{C} \\ \mathfrak{c}\neq 0}}\frac{1}{(\mathcal{N}\mathfrak{c})^s},$$ where the sum runs over the non-zero integral ideals $\mathfrak{c}$ contained in $\mathfrak{C}$ and $\mathcal{N}\mathfrak{c}$ denotes the norm of the ideal $\mathfrak{c}$. It is known that $\zeta(s,\mathfrak{C})$ defines a meromorphic function on $\mathrm{Re}>1/2$ with a unique simple pole at $s=1$ with $$\lim_{s\rightarrow 1} (s-1)\zeta(s,\mathfrak{C}) = \frac{\log(\varepsilon_{\gamma})}{N^3\sqrt{D}},$$ where $\varepsilon_{\gamma}$ is a generator of $U_+(\mathfrak{f})$ (see, e.g., \cite{Lang}). Moreover, suppose that $\mathfrak{C}\in Cl_{\mathfrak{m}}(L)$ and a nonzero integral ideal $\mathfrak{a}\in\mathfrak{C}$ are given. Let $\tau\in L^{\times}$ such that $N(\tau)>0$ and set $\mathfrak{b}' = (\tau)\mathfrak{a}^{-1}\mathfrak{f}$. Then, the partial zeta function $\zeta(s,\mathfrak{C})$ satisfies the following identity
\begin{align}
\label{id:Yamamoto}
    \zeta(s,\mathfrak{C}) = (\mathcal{N}\mathfrak{f})^{-s}\sum_{\beta\in(\tau+\mathfrak{b}')_+/U_{+}(\mathfrak{f})} \left( \frac{\mathcal{N}(\beta)}{\mathcal{N}\mathfrak{b}'} \right)^{-s}
\end{align}
(see, e.g., \cite{Yamamoto}), where $\mathcal{N}(\beta)$ denotes the norm of the principal ideal generated by the element $\beta$.

\begin{proposition}
\label{id:identity of zeta functions}
Let $u\in U$ and $\gamma\in\mathrm{sp}_l(\Gamma)$. Then, for $s\in\mathbbm{C}$ with $\mathrm{Re}(s)>1$, there exists a narrow ray ideal class $\mathfrak{C}\in Cl_{\mathfrak{m}}(L)$ such that $$\zeta_{\gamma,u}(s) = \bigg(\frac{1}{N}\bigg)^s \zeta(s,\mathfrak{C}).$$ Furthermore, $\zeta_{\gamma,u}(s)$ admits a meromorphic continuation to the complex plane having a pole at $s=1$, whose residue is given by
\begin{align*}
    \mathrm{res}_{s=1}\big(\zeta_{\gamma,u}(s)\big) = \frac{\log(\varepsilon_{\gamma})}{N^3\sqrt{D}}\,.
\end{align*}
\end{proposition}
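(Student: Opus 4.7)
The plan is to translate the sum defining $\zeta_{\gamma,u}(s)$ into the ideal-theoretic language of the real quadratic field $L = \mathbbm{Q}(\sqrt{D})$, where it matches the shape of a partial zeta function. The two key ingredients are Lemma \ref{lem:important bijections}, which rewrites the index set $\overline{M}_u^{f_{\gamma}}$ in terms of the quotient $(u/N + \mathfrak{b})_+/U_+(\mathfrak{f})$, and Yamamoto's formula \eqref{id:Yamamoto}, which expresses a partial zeta function $\zeta(s,\mathfrak{C})$ as a sum of precisely this form.

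First, I would substitute the bijection $\xi = u/N + n' + m'\theta \longmapsto (m,n) = (Nm',\, u + Nn')$ from Lemma \ref{lem:important bijections}. Using the identity $\xi\overline{\xi} = f_{\gamma}(u+Nn',\,-Nm')/(aN^3)$ already verified in the proof of that lemma, the defining series rewrites as
\begin{equation*}
\zeta_{\gamma,u}(s) = (aN^3)^{-s}\sum_{\xi \in (u/N + \mathfrak{b})_+/U_+(\mathfrak{f})} N(\xi)^{-s}.
\end{equation*}

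Next, I would apply identity \eqref{id:Yamamoto} with $\tau = u/N$ (for which $N(\tau)>0$ automatically) and with $\mathfrak{b}'$ taken to be $\mathfrak{b}$ viewed as a fractional $\mathcal{O}_L$-ideal. This choice forces $\mathfrak{a} = (\tau)\mathfrak{f}(\mathfrak{b}')^{-1}$, and $\mathfrak{C}$ is defined as the narrow-ray ideal class of $\mathfrak{a}$. Yamamoto's formula then reads
\begin{equation*}
\zeta(s,\mathfrak{C}) = (\mathcal{N}\mathfrak{f})^{-s}(\mathcal{N}\mathfrak{b})^{s}\sum_{\xi} N(\xi)^{-s}.
\end{equation*}
Using $\mathcal{N}\mathfrak{f} = N^2$ together with the norm identity $\mathcal{N}\mathfrak{b} = 1/a$, combining the two displays yields the asserted factorization $\zeta_{\gamma,u}(s) = (1/N)^{s}\,\zeta(s,\mathfrak{C})$.

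Finally, the meromorphic continuation of $\zeta_{\gamma,u}(s)$ and the value of its residue at $s=1$ both follow from this factorization together with the standard facts about $\zeta(s,\mathfrak{C})$ recalled just above the proposition. The main technical obstacle is the middle step: viewing $\mathfrak{b} = \mathbbm{Z} + \mathbbm{Z}\theta$ as a genuine fractional $\mathcal{O}_L$-ideal so that Yamamoto's formula applies as stated, and verifying the clean norm identity $\mathcal{N}\mathfrak{b} = 1/a$. The primitivity condition $\gcd(a,b,c) = 1$ on the coefficients of $\gamma$, together with the explicit shape $\theta = -(b+\sqrt{D})/(2a)$, should make both identifications work, with any remaining discrepancy between $D$ and the discriminant of $L$ absorbed into the choice of representative of $\mathfrak{C}$.
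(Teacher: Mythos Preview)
Your proposal is correct and follows essentially the same route as the paper: both arguments use Lemma~\ref{lem:important bijections} to rewrite $\zeta_{\gamma,u}(s)$ as $(aN^3)^{-s}\sum_{\xi}N(\xi)^{-s}$, then invoke Yamamoto's identity \eqref{id:Yamamoto} with $\tau=u/N$ and $\mathfrak{b}'=\mathfrak{b}$, and finish using $\mathcal{N}\mathfrak{f}=N^2$ and $\mathcal{N}\mathfrak{b}=1/a$. The paper identifies $\mathfrak{C}$ as the narrow-ray class of $\mathfrak{f}\mathfrak{b}^{-1}(u/N)$, exactly the $\mathfrak{a}$ you compute, and it simply asserts the norm identity $\mathcal{N}\mathfrak{b}=1/a$ without further comment, so the technical concern you flag is glossed over there as well.
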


\begin{proof}
Recall that $\gamma$ can be written as $\gamma_l(a,b,c)$ given by \eqref{def:gamma_l(a,b,c)} and associated to $\gamma$, we have the datum $\eqref{datum:asociated objects gamma}$. Now, from the proof of Lemma \ref{lem:important bijections}, we know the identity $$f_{\gamma}(u+Nn, -Nm) = aN^3N(\xi),$$ where $\xi = u/N + n+m\theta \in (u/N + \mathfrak{b})_+$. Since $\mathcal{N}\mathfrak{b} = 1/a$, $\mathcal{N}\mathfrak{f} = N^2$, and $\mathcal{N}\mathcal(\xi) = N(\xi)$, note that
\begin{align*}
	f_{\gamma}(u+nN,-Nm) =  N\cdot\mathcal{N}\mathfrak{f}\bigg(\frac{\mathcal{N}(\xi)}{\mathcal{N}\mathfrak{b}}\bigg).
\end{align*}

Therefore, we have
\begin{align*}
		\zeta_{\gamma,u}(s) &= \sum_{(m,n)\in \overline{M}_{u}^{f_{\gamma}}} f_{\gamma}(n,-m)^{-s} \\[2mm]
&= \bigg(\frac{1}{N}\bigg)^s (\mathcal{N}\mathfrak{f})^{-s}\sum_{\xi \in \left( \frac{u}{N}+ \mathfrak{b}\right)_+ / U_+(\mathfrak{f})} \bigg(\frac{\mathcal{N}(\xi)}{\mathcal{N}\mathfrak{b}}\bigg)^{-s}.
\end{align*}

Using \eqref{id:Yamamoto}, we obtain $$\zeta_{\gamma,u}(s) = \bigg(\frac{1}{N}\bigg)^s\zeta(s,\mathfrak{C}),$$ where $\mathfrak{C}$ is the narrow-ray ideal class of the fractional ideal $\mathfrak{f}\mathfrak{b}^{-1}(u/N)$. Consequently, the function $\zeta_{\gamma,u}(s)$ inherits the analytical properties of $\zeta(s,\mathfrak{C})$, in particular, the meromorphic continuation to the complex plane. This concludes the proof.
\end{proof}

\subsection{Computation of the constant $\mathscr{R}_{\infty}^{\mathrm{hyp}}$}
\label{sect:4.2}
Given $u\in U$, let us consider the following subsets of $\mathcal{Q}_{\Delta}(\Gamma) \times M(u)$
\begin{align*}
	S_{\Gamma}^{+}(u) &\coloneqq \{ (f;(m,n)) \mid (m,n)\in M_{u}^{f}, \, f\in \mathcal{Q}_{\Delta}(\Gamma)\}, \\
	S_{\Gamma}^{-}(u) &\coloneqq \{ (f;(m,n)) \mid (m,n)\in M_{u}^{-f}, \, f\in\mathcal{Q}_{\Delta}(\Gamma)\}.
\end{align*}

It can be verified that the sets $S_{\Gamma}^+(u)$ and $S_{\Gamma}^-(u)$ remain invariant under the diagonal action of $\Gamma$. Therefore, by the ``allgemeine Prinzip'' from \cite[p.~66]{ZagierZF}, we have 
\begin{align*}
S_{\Gamma}^{+}(u)/\Gamma & \simeq \bigsqcup_{\gamma\in\mathrm{sp}_l(\Gamma) / \Gamma} \overline{M}_u^{f_{\gamma}}, \\[2mm]
S_{\Gamma}^{-}(u)/\Gamma &\simeq \bigsqcup_{\gamma\in\mathrm{sp}_l(\Gamma) / \Gamma} \overline{M}_u^{(-f_{\gamma})}.
\end{align*}

Further, we have $\mathcal{Q}_{\Delta}(\Gamma) \times M(u) = S_{\Gamma}^{+}(u) \sqcup S_{\Gamma}^{-}(u)$. 
\vskip 2mm

Now, we conveniently define the integrals
\begin{align*}
	J_k^+ \coloneqq \int_{\mathcal{F}_{\Gamma}}\bigg(\sum_{(f;(m,n))\in S_{\Gamma}^{+}(u)} \nu_k(\gamma_f;z)\frac{\mathrm{Im}(z)^s}{|mz+n|^{2s}}\bigg) \mu_{\mathrm{hyp}}(z), \\
	J_k^- \coloneqq \int_{\mathcal{F}_{\Gamma}}\bigg(\sum_{(f;(m,n))\in S_{\Gamma}^{-}(u)} \nu_k(\gamma_f;z)\frac{\mathrm{Im}(z)^s}{|mz+n|^{2s}}\bigg) \mu_{\mathrm{hyp}}(z),
\end{align*}	
and the matrix $\lambda(l)$ by 
\begin{align*}
%\label{dfn:Matrix lambda(l)}
\lambda(l) \coloneqq \begin{pmatrix} l/2 & l^2/4-1 \\ 1 & l/2 \end{pmatrix}.
\end{align*}

\begin{lemma}
\label{lem:aux lemma hyp contrib}
Let $u\in U$, $l\in\mathbbm{Z}$ such that $|l|>2$, and $s\in\mathbbm{C}$ with $\mathrm{Re}(s)>1$. Then, the following identity holds
\begin{align*}
	J_k^+ &= \bigg(\int_{\mathbbm{H}}\nu_k(\lambda(l);z)\mathrm{Im}(z)^s \mu_{\mathrm{hyp}}(z)\bigg) \sum_{\gamma\in\mathrm{sp}_l(\Gamma)/\Gamma} \zeta_{\gamma,u}(s), \\
	J_k^- &= \bigg(\int_{\mathbbm{H}}\nu_k(\lambda(-l);z)\mathrm{Im}(z)^s \mu_{\mathrm{hyp}}(z)\bigg) \sum_{\gamma\in\mathrm{sp}_l(\Gamma)/\Gamma} \zeta_{\gamma,u}(s),
\end{align*}

where $\nu_k(\gamma;z)$ is given in Section \ref{subsect:Spectral Expansion}.
%Let $\Gamma=\overline{\Gamma}(N)$ and $N\geq 3$ be an odd square-free integer such that $g_{\Gamma}>1$. Suppose that $s\in\mathbbm{C}$ with $1<\mathrm{Re}(s)<\inf\{A,3/2\}$ and $l\in\mathbbm{Z}$ with $|l|>2$. Then the following identity holds
%\begin{align*}
%    \int\limits_{\mathcal{F}_{\Gamma}}^{\phantom{*}} \bigg( \sum_{\gamma\in\mathrm{sp}_l(\Gamma)} \nu_k(\gamma;z)\bigg) E(z,s) \mu_{\mathrm{hyp}}(z) = I_k(s;l) \sum_{\nu\in\Omega_{\Gamma}} D_{\Gamma,\nu}(s) \Big( \sum_{\gamma\in\mathrm{sp}_l(\Gamma)/\Gamma} \zeta_{\gamma,\nu}(s)\Big),
%\end{align*}
%\noindent where $\nu_k(\gamma;z) \coloneqq j_{\gamma}(z;k)\pi_k(z,\gamma z)$ and $I_k(s;l)$ is given by 
%\begin{align*}
%    I_k(s;l) \coloneqq \int\limits_{\mathbbm{H}}^{\phantom{*}} \Big[ \nu_k\Big(\begin{psmallmatrix}l/2&(l/2)^2-1\\1&l/2 \end{psmallmatrix};z\Big) + \nu_k\Big(\begin{psmallmatrix}-l/2&(l/2)^2-1\\1&-l/2 \end{psmallmatrix};z\Big) \Big]\mathrm{Im}(z)^s\mu_{\mathrm{hyp}}(z).
%\end{align*}
\end{lemma}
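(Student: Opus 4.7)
The plan is to prove both identities by a three-step procedure: regroup the sum over $S_{\Gamma}^{\pm}(u)$ by $\Gamma$-orbits via the ``allgemeine Prinzip'', unfold the integral from $\mathcal{F}_{\Gamma}$ to $\mathbbm{H}$, and perform a change of variable bringing $\gamma_f$ into the standard hyperbolic form $\lambda(\pm l)$.

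For the first two steps, the $\Gamma$-action $(f;(m,n))\cdot\delta = (f\circ\delta,(m,n)\delta)$ on $\mathcal{Q}_{\Delta}(\Gamma)\times M(u)$ has trivial stabilizers at any orbit representative $(f_{\gamma},(m_0,n_0))$, because $\Gamma_{f_{\gamma}}\cap\mathrm{Stab}((m_0,n_0))$ is the intersection of a group containing only the identity and hyperbolic elements with one containing only the identity and parabolic elements of $\mathrm{PSL}_2(\mathbbm{Z})$. Consequently, every $(f;(m,n))\in S_{\Gamma}^{\pm}(u)$ is uniquely of the form $(f_{\gamma}\circ\delta,(m_0,n_0)\delta)$ with $[\gamma]\in\mathrm{sp}_l(\Gamma)/\Gamma$, $(m_0,n_0)\in\overline{M}_u^{\pm f_{\gamma}}$, and $\delta\in\Gamma$. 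Unfolding rests on the transformation identity
\begin{align*}
\nu_k(\delta^{-1}\gamma\delta;z) = \nu_k(\gamma;\delta z),
\end{align*}
derived from the cocycle property of $j_{\gamma}(\cdot;k)$ and $\pi_k(\alpha z,\alpha w)=j_{\alpha}(z;k)j_{\alpha}(w;k)^{-1}\pi_k(z,w)$. Combined with the compatibility $\gamma_{f\circ\delta}=\delta^{-1}\gamma_{f}\delta$ of $\beta_l$ with the two $\Gamma$-actions, and the elementary identity $\mathrm{Im}(z)^s/|((m_0,n_0)\delta)(z,1)^{\mathrm{t}}|^{2s}=\mathrm{Im}(\delta z)^s/|m_0\delta z+n_0|^{2s}$, the integrand at $(f_{\gamma}\circ\delta,(m_0,n_0)\delta)$ and point $z$ equals the integrand at $(f_{\gamma},(m_0,n_0))$ and point $\delta z$. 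Summing $\delta$ over $\Gamma$ and using $\bigsqcup_{\delta}\delta\mathcal{F}_{\Gamma}=\mathbbm{H}$ yields
\begin{align*}
J_k^{\pm}=\sum_{\gamma\in\mathrm{sp}_l(\Gamma)/\Gamma}\sum_{(m_0,n_0)\in\overline{M}_u^{\pm f_{\gamma}}}\int_{\mathbbm{H}}\nu_k(\gamma;z)\,\frac{\mathrm{Im}(z)^s}{|m_0 z+n_0|^{2s}}\,\mu_{\mathrm{hyp}}(z).
\end{align*}

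In the third step I would reduce the inner integral by one further change of variable $z=\tau w$ in $\mathrm{PSL}_2(\mathbbm{R})$ chosen to satisfy simultaneously (a) $\tau^{-1}\gamma\tau=\lambda(\pm l)$ and (b) the bottom row of $\tau$ is a nonzero scalar multiple of $(-m_0,n_0)$. Condition (a) leaves a one-parameter family (the centralizer of $\lambda(l)$), the two signs corresponding to the two $\mathrm{PSL}_2(\mathbbm{R})$-cosets conjugating $\gamma$ to $\lambda(l)$ and to $\lambda(-l)=\lambda(l)^{-1}$ respectively, and this freedom accommodates the codimension-one condition (b) with the sign forced to be $\mathrm{sgn}(f_{\gamma}(n_0,-m_0))$. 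Parametrizing $\tau=\begin{psmallmatrix}n_0 t'&q\\-m_0 t'&s\end{psmallmatrix}$ with $\det\tau=1$, a direct computation yields
\begin{align*}
\frac{\mathrm{Im}(\tau w)^s}{|m_0\tau w+n_0|^{2s}}=(m_0 q+n_0 s)^{-2s}\,\mathrm{Im}(w)^s,
\end{align*}
while matching $\tau^{-1}$ on the fixed points of $\gamma$ with those of $\lambda(\pm l)$ at $\pm\sqrt{\Delta}/2$ identifies $(m_0 q+n_0 s)^2=\pm f_{\gamma}(n_0,-m_0)$. For $J_k^+$ the sum over $(m_0,n_0)\in\overline{M}_u^{f_{\gamma}}$ of $f_{\gamma}(n_0,-m_0)^{-s}$ is exactly $\zeta_{\gamma,u}(s)$; for $J_k^-$ the sum over $(m_0,n_0)\in\overline{M}_u^{-f_{\gamma}}=\overline{M}_u^{f_{\gamma^{-1}}}$ of $(-f_{\gamma}(n_0,-m_0))^{-s}=f_{\gamma^{-1}}(n_0,-m_0)^{-s}$ is $\zeta_{\gamma^{-1},u}(s)$, and since $\gamma\mapsto\gamma^{-1}$ is a $\Gamma$-conjugacy-preserving bijection of $\mathrm{sp}_l(\Gamma)/\Gamma$, relabeling the outer sum recovers $\sum_{\gamma}\zeta_{\gamma,u}(s)$.

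The main technical obstacle lies in step three: the simultaneous realization of the two constraints on $\tau$ and the identification $(m_0 q+n_0 s)^2=\pm f_{\gamma}(n_0,-m_0)$ require a careful coordinate computation using the explicit form $\gamma=\gamma_l(a,b,c)$, the fixed points $\theta_{\pm}=(-b\pm\sqrt{D})/(2a)$, and the relation $D=(l^2-4)/N^2$ forced by $\det\gamma=1$.
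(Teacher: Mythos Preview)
Your approach is essentially identical to the paper's: the same orbit decomposition via the ``allgemeine Prinzip'', the same unfolding using $\nu_k(\delta^{-1}\gamma\delta;z)=\nu_k(\gamma;\delta z)$ and $\gamma_{f\circ\delta}=\delta^{-1}\gamma_f\delta$, and the same final change of variable conjugating $\gamma_f$ to $\lambda(\pm l)$. The paper also reduces $J_k^-$ to $J_k^+$ via $-f_\gamma=f_{\gamma^{-1}}$, exactly as you do.

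Where you flag a ``main technical obstacle'', the paper simply writes down the matrix and verifies the two identities directly: for $\gamma_f=\begin{psmallmatrix}a&b\\c&d\end{psmallmatrix}$ and $(m,n)\in M_u^{f_\gamma}$, set
\[
M \coloneqq \frac{1}{f(n,-m)}\begin{pmatrix}n & -bm-n(d-a)/2 \\ -m & cn-m(d-a)/2 \end{pmatrix},
\]
which has $\det M=1/f(n,-m)>0$; one checks by hand that $M^{-1}\gamma_f M=\lambda(l)$ and $\mathrm{Im}(Mz)/|m(Mz)+n|^{2}=\mathrm{Im}(z)/f(n,-m)$. No appeal to fixed points or to the form $\gamma_l(a,b,c)$ is needed. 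One small slip in your write-up: your condition (b) should read ``the \emph{first column} of $\tau$ is a scalar multiple of $(n_0,-m_0)$'' (as your own parametrization $\tau=\begin{psmallmatrix}n_0 t'&q\\-m_0 t'&s\end{psmallmatrix}$ shows), not the bottom row; this is what forces $m_0 p+n_0 r=0$ and makes $\mathrm{Im}(\tau w)/|m_0\tau w+n_0|^2$ independent of $\mathrm{Re}(w)$.
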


%\vskip 2mm
%Then, $S_{\Gamma}$ is invariant under the simultaneous action of $\Gamma$ in each coordinate. Furthermore, we have
%\begin{align*}
%	S_{\Gamma} / \Gamma \simeq \bigsqcup_{f\in Q_l(\Gamma)/\Gamma} \Big( M_{\Gamma}^{f} / \mathrm{Stab}_{\Gamma}(f) \Big).
%\end{align*}

\begin{proof}
For the proof of the lemma, observe that it suffices to prove the statement for the integral $J_k^+$, since we have $-f_{\gamma} = f_{-\gamma} = f_{\gamma^{-1}}$. Let $(f;(m,n))\in S_{\Gamma}^{+}(u)$ with $f(x,y) = cx^2 + (d-a)xy-by^2$, therefore $\gamma_f = \begin{psmallmatrix}a&b\\c&d \end{psmallmatrix}$. Put $(m_{\delta},n_{\delta}) = (m\, n)\delta^{\mathrm{t}}$, for $\delta\in\Gamma$, and set $$M \coloneqq \frac{1}{f(n,-m)}\begin{pmatrix}n & -bm-n(d-a)/2 \\ -m & cn-m(d-a)/2 \end{pmatrix}.$$ Using the identities $$\nu_k(\delta^{-1}\gamma\delta;z) = \nu_k(\gamma;\delta z), \qquad \frac{\mathrm{Im}(z)}{|m_{\delta}z + n_{\delta}|^2} = \frac{\mathrm{Im}(\delta z)}{|m(\delta z) + n|^{2}} ,$$ which hold for $\delta\in\mathrm{PSL}_2(\mathbbm{R})$, we obtain 
\begin{align*}
	\sum_{(f;(m,n))\in S_{\Gamma}^{+}(u)} \nu_k(\gamma_f;z)\frac{\mathrm{Im}(z)^s}{|mz+n|^{2s}} 
			&= \sum_{(f;(m,n))\in S_{\Gamma}^{+}(u)/\Gamma} \sum_{\delta\in\Gamma} \nu_k(\delta^{-1}\gamma_f\delta;z)\frac{\mathrm{Im}(z)^s}{|m_{\delta}z + n_{\delta}|^{2s}} \\ 
			&= \sum_{(f;(m,n))\in S_{\Gamma}^{+}(u)/\Gamma}\sum_{\delta\in\Gamma} \nu_k(\gamma_f;\delta z)\frac{\mathrm{Im}(\delta z)^s}{|m(\delta z) + n|^{2s}}.
\end{align*}

Thus, we have 
\begin{align*}
	J_k^+ &= \int_{\mathcal{F}_{\Gamma}} \bigg(\sum_{(f;(m,n))\in S_{\Gamma}^{+}(u)} \nu_k(\gamma_f;z) \frac{\mathrm{Im}(z)^s}{|mz+n|^{2s}}\bigg) \mu_{\mathrm{hyp}}(z) \\
		  &= \sum_{(f;(m,n))\in S_{\Gamma}^{+}(u)/\Gamma} \bigg( \int_{\mathcal{F}_{\Gamma}}\sum_{\delta\in\Gamma}\nu_k(\gamma_f;\delta z)\frac{\mathrm{Im}(\delta z)^s}{|m(\delta z) + n|^{2s}} \mu_{\mathrm{hyp}}(z) \bigg) \\ 
		  &= \sum_{(f;(m,n))\in S_{\Gamma}^{+}(u)/\Gamma} \bigg( \int_{\mathbbm{H}}\nu_k(\gamma_f;z)\frac{\mathrm{Im}(z)^s}{|mz+n|^{2s}} \mu_{\mathrm{hyp}}(z)\bigg).
\end{align*}

Finally, applying the change of variables $z\mapsto Mz$ in the last integral, and using the identities 
\begin{align*}
	\frac{\mathrm{Im}(Mz)}{|m(Mz) + n|^2} = \frac{\mathrm{Im}(z)}{f(n,-m)}, \qquad M^{-1}\gamma_f M = \lambda(l),
\end{align*}

we deduce
\begin{align*}
	J_k^+ = \bigg( \int_{\mathbbm{H}} \nu_k(M^{-1}\gamma_f M; z) \mathrm{Im}(z)^s \mu_{\mathrm{hyp}}(z) \bigg) \sum_{(f;(m,n))\in S_{\Gamma}^{+}(u)/\Gamma} f(n,-m)^{-s}.
\end{align*}

This concludes the proof of the lemma.
\end{proof}

Next, let us set
\begin{align*}
%\label{dfn:Integral I_k(s;l)}
	I_k(s;l) \coloneqq \int_{\mathbbm{H}} \bigg( \nu_k(\lambda(l);z) + \nu_k(\lambda(-l);z) \bigg)\mathrm{Im}(z)^s\mu_{\mathrm{hyp}}(z).
\end{align*}

\begin{proposition}
\label{prop:hyperbolic contribution}
Let $N\geq 3$ be an odd square-free integer and $s\in\mathbbm{C}$ with $1<\mathrm{Re}(s)<\inf\{A,3/2\}$. Then the following identity holds
\begin{align*}
    \mathcal{R}_{\infty}[\mathcal{H}](s) = \frac{1}{N^s}\sum_{\substack{l\in\mathbbm{Z} \\ |l|>2}}\big( I_2(s;l) - I_0(s;l) \big) \sum_{u\in U} \bigg(D_{u}(s) \sum_{\gamma\in\mathrm{sp}_l(\Gamma)/\Gamma} \zeta_{\gamma,u}(s)\bigg),
\end{align*}
where $D_u(s)$ is the Dirichlet series given by \eqref{dfn:Dirichlet series Du(s)}. Furthermore, we have
\begin{align*}
\mathscr{R}_{\infty}^{\mathrm{hyp}} = \frac{v_{\Gamma}^{-1}}{2}\bigg( \lim_{s\rightarrow 1}\bigg(\frac{Z_{\Gamma}'}{Z_{\Gamma}}(s) - \frac{1}{s-1}\bigg) - T + 1 + o(1)\bigg),
\end{align*}
as $T\rightarrow\infty$; here $Z_{\Gamma}(s)$ stands for the Selberg zeta function of $\Gamma$.

%\begin{align*}
%\mathscr{R}_{\infty}^{\mathrm{hyp}} = -\frac{v_{\Gamma}^{-1}}{2}\bigg(\int_{0}^{T}(\Theta_{\Gamma}(u)-1)\mathrm{d}u + T \bigg),
%\end{align*}
%with $$\Theta_{\Gamma}(u) = ,$$ where $$g_u(2\log(\eta_l))$$

%the expansion at $s=1$ of the previous integral is given by
%\begin{align*}
%    \frac{v_{\Gamma}^{-1}}{2\pi}\sum_{\substack{l\in\mathbbm{Z} \\ |l|>2}} \sum_{\gamma\in\mathrm{sp}_l(\Gamma)/\Gamma} \frac{\log(\varepsilon_{\gamma})}{\sqrt{l^2-4}}A_{|l|}(t) + O(s-1),
%\end{align*}
%where $\varepsilon_{\gamma}$ is a generator of $U_{\mathfrak{m}}$ and $A_{|l|}(t)$ is given by 
%\begin{align*}
%    A_{|l|}(t) \coloneqq -\frac{\pi}{2\eta_{l}}h\Big(\frac{i}{2}\Big) + \frac{1}{4}\int_{-\infty}^{\infty}\frac{h(r)}{\frac{1}{4}+r^2}e^{-2ir\log(\eta_{l})}\mathrm{d}r    
%\end{align*}
%with $$\eta_{l} \coloneqq \frac{|l|+\sqrt{l^2-4}}{2}.$$
\end{proposition}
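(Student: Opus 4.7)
The proof naturally splits into two tasks: first, deriving the explicit $s$-dependent formula for $\mathcal{R}_{\infty}[\mathcal{H}](s)$, and second, extracting the constant term at $s=1$ and identifying it with the Selberg zeta function.

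For the first identity, the plan is to substitute the representation
\begin{align*}
E_{\infty}(z,s) = \frac{1}{N^s}\sum_{u\in U} D_u(s)\sum_{(m,n)\in M(u)} \frac{y^s}{|mz+n|^{2s}}
\end{align*}
into the definition $\mathcal{R}_{\infty}[\mathcal{H}](s) = \int_{\mathcal{F}_{\Gamma}}\mathcal{H}(z)E_{\infty}(z,s)\mu_{\mathrm{hyp}}(z)$, and then decompose $\mathcal{H}(z) = \sum_{|l|>2}\sum_{\gamma\in\mathrm{sp}_l(\Gamma)}\bigl(\nu_2(\gamma;z)-\nu_0(\gamma;z)\bigr)$ according to the trace $l$ of the hyperbolic elements. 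For fixed $l$ and $u$, the diagonal $\Gamma$-action on $\mathcal{Q}_{\Delta}(\Gamma)\times M(u)$ together with the decomposition $\mathcal{Q}_{\Delta}(\Gamma)\times M(u) = S_{\Gamma}^{+}(u)\sqcup S_{\Gamma}^{-}(u)$ allows me to rewrite the resulting double sum-integral as $J_k^+ + J_k^-$. Applying Lemma \ref{lem:aux lemma hyp contrib}, this becomes $I_k(s;l)\sum_{\gamma\in\mathrm{sp}_l(\Gamma)/\Gamma}\zeta_{\gamma,u}(s)$ after recognizing $-f_{\gamma} = f_{\gamma^{-1}}$ so that the $-$ piece is a re-indexing of the $+$ piece. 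Summing over $l$ and $u$ and pulling out the common factor $N^{-s}$ yields the first formula. Absolute convergence in the strip $1<\mathrm{Re}(s)<\inf\{A,3/2\}$ justifies the interchanges; the upper bound ensures convergence of the sums of $\zeta_{\gamma,u}(s)$, while $A$ controls the decay built into $h_T$ via the point-pair invariant.

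For the asymptotic formula, I extract the Laurent expansion of the product at $s=1$. Proposition \ref{id:identity of zeta functions} gives
\begin{align*}
\zeta_{\gamma,u}(s) = \frac{\log(\varepsilon_{\gamma})}{N^3\sqrt{D}}\cdot\frac{1}{s-1} + c_{\gamma,u} + O(s-1),
\end{align*}
and \eqref{expansion:sum of D_u(s)} supplies $\sum_{u\in U}D_u(s) = N^3/(\pi v_{\Gamma}) + O(s-1)$. Since each $D_u(s)$ is itself holomorphic at $s=1$, the residue of $\sum_u D_u(s)\sum_{\gamma}\zeta_{\gamma,u}(s)$ at $s=1$ equals $(\pi v_{\Gamma})^{-1}\sum_{\gamma\in\mathrm{sp}_l(\Gamma)/\Gamma}\log(\varepsilon_{\gamma})/\sqrt{D}$. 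Subtracting the pole contribution $v_{\Gamma}^{-1}\int_{\mathcal{F}_{\Gamma}}\mathcal{H}(z)\mu_{\mathrm{hyp}}(z)/(s-1)$ from the definition of $\mathscr{R}_{\infty}^{\mathrm{hyp}}$ and collecting constant terms, I obtain an expression of the schematic form
\begin{align*}
\mathscr{R}_{\infty}^{\mathrm{hyp}} = \frac{v_{\Gamma}^{-1}}{2\pi}\sum_{|l|>2}\bigl(I_2(1;l)-I_0(1;l)\bigr)\sum_{\gamma\in\mathrm{sp}_l(\Gamma)/\Gamma}\frac{\log(\varepsilon_{\gamma})}{\sqrt{D}} + \text{(correction terms)},
\end{align*}
where the correction terms involve derivatives of $(I_2-I_0)$, values of $D_u(1)$, and the constants $c_{\gamma,u}$.

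The critical identification step is to recognize that $\sum_{\gamma}\log(\varepsilon_{\gamma})/\sqrt{D}$ packaged with the weight $I_2(1;l)-I_0(1;l)$, summed over $|l|>2$, reproduces the hyperbolic contribution to the Selberg trace formula applied to the difference of weight-$2$ and weight-$0$ automorphic kernels built from $h_T$. The $\log(\varepsilon_{\gamma})$ is precisely $\tfrac{1}{2}\ell(\gamma_0)$ for the primitive hyperbolic $\gamma_0$ generating the centralizer, so this is the classical weighted sum over primitive hyperbolic conjugacy classes that yields $Z_{\Gamma}'/Z_{\Gamma}(s)$ via the Selberg trace formula. Specifically, the weight-difference $I_2-I_0$ applied to $h_T$ produces, after a Selberg–Harish–Chandra-type computation, a test function whose hyperbolic-class sum is the regularization of $Z_{\Gamma}'/Z_{\Gamma}(s)$ at $s=1$, while its spectral side generates the $-T$ term from $\int h_T(r)\,dr$-type integrals and the $+1+o(1)$ from lower-order corrections as $T\to\infty$. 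This yields the claimed formula.

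\textbf{Main obstacle.} The delicate step is the final identification: relating the explicit sum over hyperbolic conjugacy classes weighted by $(I_2-I_0)(1;l)\log(\varepsilon_{\gamma})/\sqrt{D}$ to the regularized logarithmic derivative $\lim_{s\to 1}\bigl(Z_{\Gamma}'/Z_{\Gamma}(s)-1/(s-1)\bigr)$ while simultaneously tracking the $T$-dependence. Since $\nu_k(\gamma;z)$ is built from the inverse Selberg–Harish–Chandra transform of $h_T(r)=\exp(-T(1/4+r^2))$, the integrals $I_k(s;l)$ carry all the $T$-dependence, and their asymptotics as $T\to\infty$ must be computed precisely enough to isolate both the leading $-T$ term and the constant $+1$, relying on $h_T$'s Gaussian decay and standard heat-kernel asymptotics on $\mathbbm{H}$. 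This is where the careful bookkeeping of the proof concentrates.
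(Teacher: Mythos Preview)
Your derivation of the first identity is correct and matches the paper's argument: decompose $\mathcal{H}$ by trace, insert the $D_u$-expansion of $E_{\infty}$, use the splitting $\mathcal{Q}_{\Delta}(\Gamma)\times M(u)=S_{\Gamma}^{+}(u)\sqcup S_{\Gamma}^{-}(u)$, and apply Lemma~\ref{lem:aux lemma hyp contrib}.

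The second half, however, has a genuine gap. Your ``schematic'' constant term
\[
\frac{v_{\Gamma}^{-1}}{2\pi}\sum_{|l|>2}\bigl(I_2(1;l)-I_0(1;l)\bigr)\sum_{\gamma}\frac{\log(\varepsilon_{\gamma})}{\sqrt{D}}
\]
is identically zero, because the key structural fact you are missing is that $I_2(s;l)-I_0(s;l)$ \emph{vanishes} at $s=1$: by \cite[Proposition~3.3.2]{AbbesUllmo} one has $I_2(s;l)-I_0(s;l)=\pi A_l(T)(s-1)+O((s-1)^2)$. This first-order zero cancels the simple pole of $\zeta_{\gamma,u}(s)$, so $\mathcal{R}_{\infty}[\mathcal{H}](s)$ is actually holomorphic at $s=1$ and $\mathscr{R}_{\infty}^{\mathrm{hyp}}=\mathcal{R}_{\infty}[\mathcal{H}](1)$ directly. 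Consequently the constants $c_{\gamma,u}$ and the derivatives $D_u'(1)$ that you list as ``correction terms'' never enter: the only surviving contribution is the product of the derivative $\pi A_l(T)$ with the residue $\log(\varepsilon_{\gamma})/(N^3\sqrt{D})$, the value $\sum_u D_u(1)=N^3/(\pi v_{\Gamma})$, and $N^{-1}$, giving the clean expression
\[
\mathscr{R}_{\infty}^{\mathrm{hyp}}=\frac{1}{v_{\Gamma}}\sum_{|l|>2}\sum_{\gamma\in\mathrm{sp}_l(\Gamma)/\Gamma}\frac{\log(\varepsilon_{\gamma})}{\sqrt{l^2-4}}\,A_l(T).
\]
Your outline treats this main term as a correction and the zero term as the lead, which would leave you unable to close the argument.

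The final identification is also more concrete than a ``Selberg--Harish--Chandra-type computation''. One invokes \cite[Proposition~3.3.3]{AbbesUllmo}, namely $A_l(T)=-\tfrac{1}{2}\int_0^T g(t,2\log\eta_l)\,\mathrm{d}t$ with $g(t,u)=(4\pi t)^{-1/2}e^{-t/4-u^2/(4t)}$, so that $\mathscr{R}_{\infty}^{\mathrm{hyp}}=-(v_{\Gamma}^{-1}/2)\int_0^T\Theta_{\Gamma}(t)\,\mathrm{d}t$, where $\Theta_{\Gamma}$ is the hyperbolic contribution to the heat trace. The stated formula then follows from the standard asymptotic $\int_0^T\Theta_{\Gamma}(t)\,\mathrm{d}t=T-\lim_{s\to 1}\bigl(Z_{\Gamma}'/Z_{\Gamma}(s)-1/(s-1)\bigr)-1+o(1)$ as $T\to\infty$. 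Without the vanishing of $I_2-I_0$ at $s=1$ and this explicit heat-kernel representation of $A_l(T)$, your sketch does not reach the Selberg zeta function.
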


\begin{proof}
For the proof, we write $K_{k}^{\mathrm{hyp}}(z)$ as follows
\begin{align*}
K_{k}^{\mathrm{hyp}}(z) = \sum_{\substack{\gamma\in\Gamma \\ |\mathrm{tr}(\gamma)|>2}} \nu_k(\gamma;z) = \sum_{\substack{l\in\mathbbm{Z} \\ |l|>2}} \sum_{\gamma \in \mathrm{sp}_l(\Gamma)} \nu_k(\gamma;z).
\end{align*}
Then, note that 
\begin{align*}
K_k^{\mathrm{hyp}}(z)E_{\infty}(z,s) = \frac{1}{N^s}\sum_{\substack{l\in\mathbbm{Z} \\ |l|>2}}\sum_{u\in U}D_u(s) \sum_{(\gamma;(m,n))}\nu_k(\gamma;z)\frac{\mathrm{Im}(z)^s}{|mz+n|^{2s}},
\end{align*}
where the innermost sum runs over the elements of $\mathrm{sp}_l(\Gamma)\times M(u)$, and since $\mathrm{sp}_l(\Gamma) \simeq \mathcal{Q}_{\Delta}(\Gamma)$, we have
\begin{align*}
K_k^{\mathrm{hyp}}(z)E_{\infty}(z,s) = \frac{1}{N^s}\sum_{\substack{l\in\mathbbm{Z} \\ |l|>2}}\sum_{u\in U}D_u(s) \sum_{(f;(m,n))}\nu_k(\gamma_f;z)\frac{\mathrm{Im}(z)^s}{|mz+n|^{2s}}.
\end{align*}
Thus, the previous identity yields
\begin{align*}
	\mathcal{R}_{\infty}[K_k^{\mathrm{hyp}}](s) &=  \int_{\mathcal{F}_{\Gamma}} K_k^{\mathrm{hyp}}(z)E_{\infty}(z,s) \mu_{\mathrm{hyp}}(z) \\
	&= \frac{1}{N^s} \sum_{ \substack{l\in\mathbbm{Z} \\ |l|>2}} \sum_{u\in U} D_u(s) \int_{\mathcal{F}_{\Gamma}} \bigg( \sum_{(f;(m,n))}\nu_k(\gamma_f;z) \frac{y^s}{|mz+n|^{2s}} \bigg) \mu_{\mathrm{hyp}}(z).
\end{align*}
Using the decomposition $\mathcal{Q}_{\Delta}(\Gamma)\times M(u) \simeq S_{\Gamma}^+(u) \sqcup S_{\Gamma}^{-}(u)$ and Lemma \ref{lem:aux lemma hyp contrib}, we get 	
\begin{align*}
\mathcal{R}_{\infty}[K_k^{\mathrm{hyp}}](s)	&= \frac{1}{N^s} \sum_{ \substack{l\in\mathbbm{Z} \\ |l|>2}} \sum_{u\in U} D_u(s) (J_k^+ + J_k^-) \\
	&= \frac{1}{N^s} \sum_{ \substack{l\in\mathbbm{Z} \\ |l|>2}} I_k(s;l) \sum_{u\in U} \bigg(D_u(s) \sum_{\gamma\in \mathrm{sp}_l(\Gamma)/\Gamma}\zeta_{\gamma,u}(s)\bigg).
\end{align*}
This proves the first assertion. 
\vskip 2mm

For the second part of the proposition, observe that
\begin{align*}
\sum_{u\in U} D_u(s)\zeta_{\gamma,u}(s) = \bigg(\sum_{u\in U}D_u(s)\bigg)\bigg(\frac{\log(\varepsilon_{\gamma})}{N^3\sqrt{D}}\frac{1}{s-1} + C + O(s-1)\bigg).
\end{align*}
Then, by \cite[Proposition 3.3.2]{AbbesUllmo}, namely 
\begin{align*}
I_2(s;l) - I_0(s;l) = \pi{}A_l(T)(s-1) + O((s-1)^2),
\end{align*}
where 
\begin{align*}
%\label{dfn:modified A_l(T)}
A_l(T) \coloneqq -\frac{1}{2\eta_{l}} + \frac{1}{4\pi}\int_{-\infty}^{\infty}\frac{h_T(r)}{\frac{1}{4}+r^2} e^{-2ir\log(\eta_{l})}\mathrm{d}r
\end{align*}
with $l> 2$ and $\eta_l \coloneqq (l+\sqrt{l^2-4})/2$, and by virtue of identity \eqref{expansion:sum of D_u(s)}, we deduce the following expansion at $s=1$
\begin{align*}
\mathcal{R}_{\infty}[\mathcal{H}](s) &= \sum_{\substack{l\in\mathbbm{Z} \\ |l|>2}}\sum_{u\in U}\sum_{\gamma\in\mathrm{sp}_l(\Gamma)/\Gamma} \frac{1}{N^s}(I_2(s;l)-I_0(s;l))D_u(s)\zeta_{\gamma,u}(s) \\
&= \sum_{\substack{l\in\mathbbm{Z} \\ |l|>2}} \sum_{\gamma\in\mathrm{sp}_l(\Gamma)/\Gamma} \frac{\log(\varepsilon_{\gamma})}{\sqrt{l^2-4}}\frac{A_l(T)}{v_{\Gamma}} + O(s-1).
\end{align*}
Therefore, we have 
\begin{align*}
\mathscr{R}_{\infty}^{\mathrm{hyp}} &= \mathcal{R}_{\infty}[\mathcal{H}](1) \\ 
&= \frac{1}{v_{\Gamma}}\sum_{\substack{l\in\mathbbm{Z} \\ |l|>2}}\sum_{\gamma\in\mathrm{sp}_l(\Gamma)/\Gamma} \frac{\log(\varepsilon_{\gamma})}{\sqrt{l^2-4}}A_l(T).
\end{align*}
Now, by \cite[Proposition 3.3.3]{AbbesUllmo}, namely
\begin{align*}
A_l(T) = -\frac{1}{2}\int_0^{T}g(t,2\log(\eta_l))\mathrm{d}t
\end{align*}
for $l>2$ with $$g(t,u) \coloneqq \frac{1}{\sqrt{4\pi{}t}}e^{-\frac{t}{4} - \frac{u^2}{4t}},$$ we get
\begin{align*}
\mathscr{R}_{\infty}^{\mathrm{hyp}} 
&= - \frac{v_{\Gamma}^{-1}}{2}\sum_{\substack{l\in\mathbbm{Z} \\ |l|>2}}\sum_{\gamma\in\mathrm{sp}_l(\Gamma)/\Gamma} \frac{\log(\varepsilon_{\gamma})}{\sqrt{l^2-4}} \int_0^{T}g(t,2\log(\eta_l))\mathrm{d}t \\
&= - \frac{v_{\Gamma}^{-1}}{2} \int_{0}^{T}\Theta_{\Gamma}(t)\mathrm{d}t,
\end{align*}
where $$\Theta_{\Gamma}(t) \coloneqq \sum_{\substack{l \in \mathbbm{Z} \\ |l|>2}}\sum_{\gamma\in\mathrm{sp}_l(\Gamma)/\Gamma}\frac{\log(\varepsilon_{\gamma})}{\sqrt{l^2-4}}g(t,2\log(\eta_l)).$$ Finally, since 
$$\int_{0}^{T}\Theta_{\Gamma}(t)\mathrm{d}t = T - \lim_{s\rightarrow 1}\bigg(\frac{Z_{\Gamma}'}{Z_{\Gamma}}(s) - \frac{1}{s-1}\bigg) - 1 +o(1)$$ holds as $T\rightarrow \infty$, the assertion follows.
\end{proof}

%==================%
%== SECTION FIVE ==%
%==================%

\section{The parabolic contribution of $\mathscr{R}_{\infty}$}

Recall from identity \eqref{eqn:automorphic interpretation} that $\mathscr{R}_{\infty}^{\mathrm{par}}$ is the parabolic contribution of the Rankin--Selberg constant $\mathscr{R}_{\infty}$ at $\infty$. In this chapter, we compute $\mathscr{R}_{\infty}^{\mathrm{par}}$ explicitly (see Proposition \ref{prop:parabolic contribution}). To do so, we first note that $$\mathcal{R}_{\infty}[\mathcal{P}](s) = \int_{0}^{\infty}p(y)y^{s-2}\mathrm{d}y,$$ where $p(y)$ is the 0-th coefficient in the Fourier expansion of $\mathcal{P}(\sigma_{\infty}z)$. Further, since $$p(y) = \sum_{j=1}^{4}(p_j(y;2) - p_j(y;0)),$$ where $p_j(y;k)$ is given by \eqref{dfn:auxiliary funcions}, it suffices to compute the integrals 
\begin{align*}
	\mathcal{M}_j(s) \coloneqq \int_{0}^{\infty} (p_j(y;2) - p_j(y;0)) y^{s-2}\mathrm{d}y,
\end{align*}
which is done in Section \ref{sect:5.1}. Then, in Section \ref{sect:5.2}, we gather the identities for $\mathcal{M}_j(s)$ and conclude the computation of $\mathscr{R}_{\infty}^{\mathrm{par}}$ in Proposition \ref{prop:parabolic contribution}.

%\textcolor{blue}{
%In this chapter, we explicitly compute $\mathscr{R}_{\infty}^{\mathrm{par}}$. To do so, we reduce the problem to determine essentially the Mellin transform $\mathcal{M}_j(s)$ of functions $p_j(y;k)$, where the  $p_j(y;k)$ are given by \eqref{dfn:auxiliary funcions}; here $j=1,2,3,4$. In Section \ref{sect:5.1}, we compute $\mathcal{M}_j(s)$, for each $j=1,2,3,4$, and in Section \ref{sect:5.2}, we compute $\mathcal{R}_{\infty}[\mathcal{P}](s)$ (see Proposition \ref{prop:parabolic contribution}).
%}
\vskip 2mm

In this section, we will consider the following settings. We define $\lambda(2) \coloneqq \begin{psmallmatrix} 1 & 0 \\ 1 & 1\end{psmallmatrix}$ and $\lambda(-2) \coloneqq \begin{psmallmatrix} -1 & 0 \\ 1 & -1\end{psmallmatrix}$, and let $I_k(s;2)$ be the integral given by
\begin{align}
\label{dfn:Integral I_k(s;2)}
	I_k(s;2) \coloneqq \int_{\mathbbm{H}}\bigg( \nu_k(\lambda(2);z) + \nu_k(\lambda(-2);z) \bigg)\mathrm{Im}(z)^s\mu_{\mathrm{hyp}}(z).
\end{align}
From \cite[Proposition 3.3.4]{AbbesUllmo}, we have the Laurent expansion at $s=1$ of the difference $I_2(s;2) - I_0(s;2)$, namely
\begin{align}
\label{exp:Expansion I_2(s;2)}
I_2(s;2) - I_0(s;2) = \sqrt{\pi}\frac{\Gamma(s-\frac{1}{2})}{\Gamma(s)}\bigg( A_2(T)(s-1) + C_1(T)(s-1)^2  + O((s-1)^3)\bigg),
\end{align}
where $$A_2(T) \coloneqq -\frac{1}{2} + \frac{1}{4\pi}\int_{-\infty}^{\infty}\frac{h_T(r)}{\frac{1}{4}+r^2}\mathrm{d}r.$$

This expansion will be useful later on. Next, recall that $\phi_k$ stands for the inverse Selberg/Harish--Chandra transform of weight $k$ of the function $h_T(r)$ given by \eqref{dfn:h(r) test function}. Then, we set $$\Psi_k^{\pm}(y) \coloneqq \Psi_k(y) + \Psi_k(-y),$$ where $\Psi_k(y)$ is the function given by
\begin{align*}
%\label{dfn:Integral Psi_k}
    \Psi_k(y) \coloneqq \int_{-\infty}^{\infty} \phi_k(u^2)\bigg( \frac{1-iu}{1+iu} \bigg)^{k/2} e^{2i\pi uy}\mathrm{d}u.
\end{align*}

We will write $$\widetilde{E}_{\infty,k}(\sigma_{\infty}z,s) \coloneqq E_{\infty,k}(\sigma_{\infty}z,s) - y^s - \varphi_{\infty\infty,k}(s),$$ where $E_{\infty,0}(z,s)$ and $\varphi_{\infty\infty,0}(s)$ stands for $E_{\infty}(z,s)$ and $\varphi_{\infty\infty}(s)$, respectively. For convenience, we define the following integrals
\begin{align}
\label{dfn:auxiliary funcions}
   p_1(y;k) &\coloneqq \int_{-1/2}^{1/2}\Big( \sum_{\substack{\gamma\in\Gamma\,\setminus\,\Gamma_{\infty} \nonumber \\ |\mathrm{tr}(\gamma)|=2}} \nu_{k}(\gamma;\sigma_{\infty}z)\Big) \mathrm{d}x;\nonumber \\
   p_2(y;k) &\coloneqq \bigg( \sum_{\gamma\in\Gamma_{\!_{\infty}}}\nu_{k}(\gamma;\sigma_{\infty}z) \bigg)  - \frac{y}{2\pi} \int_{-\infty}^{\infty}h_T(r)\mathrm{d}r;\nonumber\\
   p_3(y;k) &\coloneqq - \frac{y}{2\pi}\int_{-\infty}^{\infty}h_T(r)\bigg( \frac{\frac{1}{2}+ir}{\frac{1}{2}-ir} \bigg)^{k/2}\varphi_{\infty\infty}\bigg( \frac{1}{2}-ir\bigg) y^{2ir} \mathrm{d}r - \frac{2-k}{2}v_{\Gamma}^{-1}; \\[2mm]
   p_4(y;k) &\coloneqq - \int_{-1/2}^{1/2} \bigg( \frac{1}{4\pi} \int_{-\infty}^{\infty} h_T(r) \sum_{q\in P_{\Gamma}} \bigg| \widetilde{E}_{\infty,k}\bigg(\sigma_{\infty}z,\frac{1}{2}+ir\bigg)\bigg|^2\mathrm{d}r\bigg)\mathrm{d}x; \nonumber\\[2mm]
    p_4^*(y) &\coloneqq - \frac{1}{4\pi} \sum_{q\in P_{\Gamma}} \int_{-\infty}^{\infty} \bigg( \frac{h_T(r)}{\frac{1}{4}+r^2} \mathcal{R}_{\infty}\bigg[ \, \bigg| \widetilde{E}_{\infty,k}\bigg(\sigma_{\infty}z,\frac{1}{2}+ir\bigg)\bigg|^2\bigg](s) \bigg) \mathrm{d}r; \nonumber
\end{align}
%\noindent where $\widetilde{E}_{\infty,k}(\sigma_{\infty}z,s)$ is given by $$\phantom{\int}\widetilde{E}_{\infty,k}(\infty_{\infty}z,s) \coloneqq E_{\infty,k}(\sigma_{\infty}z,s) - y^s -\varphi_{\infty\infty,k}(s)y^{1-s}.$$ 
where the sum in the integrand of the function $p_1(y;k)$ runs over all elements of $\Gamma$ that are not in $\Gamma_{\infty}$. 

%Finally, for $j=1,2,3,4$, we define 
%\begin{align*}
%	\mathcal{M}_j(s) \coloneqq \int_{0}^{\infty} (p_j(y;2) - p_j(y;0)) y^{s-2}\mathrm{d}y.
%\end{align*}

%Next, we proceed to establish new notation and prove some necessary lemmas.

\subsection{Preparatory lemmas}
\label{sect:5.1}

\begin{lemma}
Let $N\geq 3$ be an odd square-free integer and $s\in\mathbbm{C}$ with $1<\mathrm{Re}(s)<A$. Then, the following identity holds
\begin{align*}
	\mathcal{M}_1(s) &= \frac{2\zeta(s)\zeta(2s-1)}{\zeta(2s)N^{2s-1}}(I_2(s;2) - I_0(s;2)),
\end{align*}
where $\zeta(s)$ denotes the Riemann zeta funcion and $I_k(s;2)$ is given by \eqref{dfn:Integral I_k(s;2)}. Furthermore, the Laurent expansion of $\mathcal{M}_1(s)$ at $s=1$ is given by
\begin{align*}
	\frac{12A_2(T)}{\pi N}\frac{1}{s-1} + \frac{12}{\pi N}\bigg(C_1(T) + A_2(T)(2\mathscr{C}+\gamma_{\!_{\mathrm{EM}}}-2\log(N)) \bigg) + O(s-1),
\end{align*}
where $\gamma_{\!_{\mathrm{EM}}}$ denotes the Euler--Mascheroni constant and $C_1(T)$ is a constant that depends only on the fixed positive real $T$.% and is such that $\lim_{T \rightarrow\infty}C_1(T)<\infty$, and $A_2(T)$ is given by $$A_2(T) \coloneqq - \frac{1}{2} + \frac{1}{4\pi}\int_{-\infty}^{\infty}\frac{h_T(r)}{\frac{1}{4}+r^2}\mathrm{d}r.$$
\end{lemma}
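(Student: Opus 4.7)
The plan is to establish the identity via Rankin--Selberg unfolding and then extract the Laurent expansion from standard analytic facts. First, since $p_1(y;k)$ is by construction the zeroth Fourier coefficient at $\infty$ of the $\Gamma$-invariant function $F_k(z) \coloneqq \sum_\gamma \nu_k(\gamma;z)$, with the sum running over $\gamma \in \Gamma \setminus \Gamma_\infty$ satisfying $|\mathrm{tr}(\gamma)|=2$, the Rankin--Selberg identity recalled in Section \ref{subsect:Eisenstein Scattering RaSe} gives
\begin{align*}
\mathcal{M}_1(s) = \int_{\mathcal{F}_\Gamma}(F_2-F_0)(z)\,E_\infty(z,s)\,\mu_{\mathrm{hyp}}(z).
\end{align*}
Inserting the representation $E_\infty(z,s) = N^{-s}\sum_{u\in U}D_u(s)\sum_{(m,n)\in M(u)}y^s/|mz+n|^{2s}$ reduces $\mathcal{M}_1(s)$ to a triple sum over $u\in U$, $(m,n)\in M(u)$, and parabolic $\gamma$.

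Second, I adapt Lemma \ref{lem:aux lemma hyp contrib} to parabolic elements. Each such $\gamma \in \Gamma\setminus\Gamma_\infty$ with $|\mathrm{tr}(\gamma)|=2$ is conjugate to $\lambda(\pm 2)$ via a matrix $M$ determined by its parabolic fixed point, and one may write the associated degenerate quadratic form $f_\gamma$ as in Section \ref{sect:4.1}. Zagier's ``allgemeine Prinzip'' together with the change of variables $z\mapsto Mz$ then converts the integral over $\mathcal{F}_\Gamma$ and the sum over $\gamma$ into $I_k(s;2)$ times a Dirichlet series $\mathcal{D}_u(s)$ indexed by primitive pairs in $M(u)$. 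The main obstacle is to verify the weighted-sum identity
\begin{align*}
\frac{1}{N^s}\sum_{u\in U}D_u(s)\,\mathcal{D}_u(s) = \frac{2\zeta(s)\zeta(2s-1)}{\zeta(2s)\,N^{2s-1}},
\end{align*}
in which the factor $1/\zeta(2s)$ encodes the M\"obius-inversion reduction to primitive pairs supplied by $D_u(s)$, the factor $\zeta(s)\zeta(2s-1)$ is the standard Epstein-series evaluation of the lattice sum, and the power $N^{1-2s}$ reflects the congruence class $M(u)$. Combined with the factor $I_2(s;2)-I_0(s;2)$, this yields the first assertion.

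Finally, the Laurent expansion at $s=1$ follows by combining the standard data $\zeta(s) = (s-1)^{-1} + \gamma_{\mathrm{EM}} + O(s-1)$, $\zeta(2s-1) = \tfrac12(s-1)^{-1} + \gamma_{\mathrm{EM}} + O(s-1)$, $\zeta(2s) = \pi^2/6 + O(s-1)$, $N^{1-2s} = N^{-1}\bigl(1-2(s-1)\log N + O((s-1)^2)\bigr)$, the special values $\sqrt{\pi}\,\Gamma(1/2)/\Gamma(1)=\pi$ with logarithmic derivative $\psi(1/2)-\psi(1)=-2\log 2$, and the expansion \eqref{exp:Expansion I_2(s;2)} of $I_2(s;2)-I_0(s;2)$. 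The residue at $s=1$ arises from the double pole of $\zeta(s)\zeta(2s-1)$ being cut down by the simple zero of $I_2(s;2)-I_0(s;2)$; the constant term collects the subleading contributions, with $-2\log N$ coming from $N^{1-2s}$, $\gamma_{\mathrm{EM}}$ from the zeta expansions, and $\mathscr{C}$ packaging the remaining subleading Laurent data (independent of $N$ and $T$). Once the Dirichlet-series identification in the second step is in place, this expansion is pure bookkeeping.
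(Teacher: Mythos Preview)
There is a genuine gap in your first step. The function $F_k(z)=\sum_{\gamma}\nu_k(\gamma;z)$, with the sum over parabolic $\gamma\in\Gamma\setminus\Gamma_\infty$, is \emph{not} $\Gamma$-invariant: using the identity $\nu_k(\delta^{-1}\gamma\delta;z)=\nu_k(\gamma;\delta z)$ from the proof of Lemma~\ref{lem:aux lemma hyp contrib}, one sees that conjugation by $\delta\in\Gamma\setminus\Gamma_\infty$ carries the excluded set $\Gamma_\infty$ to the stabilizer $\Gamma_{\delta^{-1}\infty}$ of a different cusp. Hence the Rankin--Selberg identity of Section~\ref{subsect:Eisenstein Scattering RaSe}, which requires $\Gamma$-invariance of the integrand, does not convert $\int_0^\infty p_1(y;k)y^{s-2}\,\mathrm{d}y$ into $\int_{\mathcal{F}_\Gamma}F_k(z)E_\infty(z,s)\,\mu_{\mathrm{hyp}}(z)$, and consequently the insertion of the $D_u(s)$-decomposition of $E_\infty$ and the appeal to the ``allgemeine Prinzip'' are not justified here.

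The paper avoids this entirely: it works directly with the strip integral $\int_0^\infty\int_{-1/2}^{1/2}(\cdots)y^{s-2}\,\mathrm{d}x\,\mathrm{d}y$, which only needs periodicity in $x$ (i.e.\ $\Gamma_\infty$-invariance), not full $\Gamma$-invariance. It then parametrizes the parabolic elements of trace $\pm 2$ explicitly via bijections $N\mathbbm{Z}\setminus\{0\}\times(B\backslash\mathrm{PSL}_2(\mathbbm{Z}))\to\{\gamma\in\Gamma\setminus\Gamma_\infty:\mathrm{tr}(\gamma)=\pm 2\}$, unfolds the $\Gamma_\infty$-sum to pass from the strip to $\mathbbm{H}$, and applies a change of variables reducing each term to $I_k(s;2)$. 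The resulting Dirichlet series $\sum'|mc^2|^{-s}$ is evaluated directly as $2\zeta(s)\zeta(2s-1)/(\zeta(2s)N^{s-1})$, without any recourse to the $D_u(s)$ series; your ``main obstacle'' identity is therefore both unnecessary and unproven. For the Laurent expansion, the paper packages the zeta-factors as $\sqrt{\pi}\,\Gamma(s-\tfrac12)\Gamma(s)^{-1}\zeta(2s-1)/\zeta(2s)$, whose constant term at $s=1$ is by definition $6\mathscr{C}/\pi$, and multiplies by the expansion~\eqref{exp:Expansion I_2(s;2)}; your bookkeeping here is essentially fine once the identity is in hand.
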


\begin{proof}
By definition, we have $$\mathcal{M}_1(s) = \int_{0}^{\infty}(p_1(y;2) - p_1(y;0))y^{s-2}\mathrm{d}y.$$ In order to compute $\mathcal{M}_1(s)$, let us consider the following integral $$\int_0^{\infty}p_1(y;k)y^{s-2}\mathrm{d}y = \int_0^{\infty}\int_{-1/2}^{1/2}\bigg( \sum_{\substack{\gamma\in\Gamma\,\setminus\,\Gamma_{\infty} \\ |\mathrm{tr}(\gamma)|=2}} \nu_k(\gamma;\sigma_{\infty}z) \bigg)y^{s-2}\mathrm{d}x\mathrm{d}y.$$ 

Put $B \coloneqq \{n(b) \mid b\in\mathbbm{Z}\}$. Then, note that we have bijections
\begin{align*}
	N\mathbbm{Z}\setminus\{0\} \times (B\backslash\mathrm{PSL}_2(\mathbbm{Z})) & \overset{\sim}{\longrightarrow } \{\gamma\in\Gamma\setminus\Gamma_{\infty} \mid \mathrm{tr}(\gamma) = 2 \},  \\[2mm]
	N\mathbbm{Z}\setminus\{0\} \times -(B\backslash\mathrm{PSL}_2(\mathbbm{Z})) & \overset{\sim}{\longrightarrow} \{\gamma\in\Gamma\setminus\Gamma_{\infty} \mid \mathrm{tr}(\gamma) = -2 \},
\end{align*}
given by 
\begin{align*}
	\bigg(m,B\begin{pmatrix} *&*\\c&d \end{pmatrix}\bigg) & \longmapsto \gamma_+(m;c,d) \coloneqq \begin{pmatrix} 1+mcd & md^2 \\ -mc^2 & 1-mcd \end{pmatrix}, \\
	\bigg(m,B\begin{pmatrix} *&*\\c&d \end{pmatrix}\bigg) & \longmapsto \gamma_-(m;c,d) \coloneqq \begin{pmatrix} -1+mcd & md^2 \\ -mc^2 & -1-mcd \end{pmatrix},
\end{align*}
respectively. With these bijections, we obtain
\begin{align*}
	\int_0^{\infty} p_1(y;k)y^{s-2} \mathrm{d}y &= \int_0^{\infty} \int_{-1/2}^{1/2} \bigg( \sum_{\left(m,B\begin{psmallmatrix}*&*\\c&d \end{psmallmatrix}\right)}^{+}\nu_k(\gamma_+(m;c,d);\sigma_{\infty}z) \\
	&+ \sum_{\left(m,B\begin{psmallmatrix}*&*\\c&d \end{psmallmatrix}\right)}^{-}\nu_k(\gamma_-(m;c,d);\sigma_{\infty}z) \bigg)y^{s-2}\mathrm{d}x\mathrm{d}y,
\end{align*}

where the first sum runs over all pairs $(m,B \begin{psmallmatrix}*&*\\c&d \end{psmallmatrix})$ in $N\mathbbm{Z}\setminus\{0\}\times(B\backslash\mathrm{PSL}_2(\mathbbm{Z}))$, and the second sum runs over all pairs $(m,B\begin{psmallmatrix}*&*\\c&d \end{psmallmatrix})$ in $N\mathbbm{Z}\setminus\{0\} \times -(B\backslash\mathrm{PSL}_2(\mathbbm{Z}))$. Then it suffices to compute 
\begin{align*}
	P_k^+ \coloneqq \int_0^{\infty} \int_{-1/2}^{1/2} \bigg( \sum_{\left(m,B\begin{psmallmatrix}*&*\\c&d \end{psmallmatrix}\right)}^{+}\nu_k(\gamma_+(m;c,d);\sigma_{\infty}z)\bigg) y^{s-2}\mathrm{d}x\mathrm{d}y.
\end{align*}

Observe that, 
\begin{align*}
	P_k^+ = \int_0^{\infty} \int_{-1/2}^{1/2} \bigg( \sideset{}{'}\sum_{\left(m,B\begin{psmallmatrix}*&*\\c&d \end{psmallmatrix}\right)} \sum_{n\in\mathbbm{Z}} \nu_k(\gamma_+(m;c,d);\sigma_{\infty}z)\bigg) y^{s-2}\mathrm{d}x\mathrm{d}y,
\end{align*}
where now, the first sum runs over all pairs $(m,B \begin{psmallmatrix}*&*\\c&d \end{psmallmatrix})$ representing a class in $N\mathbbm{Z}\setminus\{0\}\times(B\backslash\mathrm{PSL}_2(\mathbbm{Z})/\Gamma_{\infty})$; further, by convergence, we can rewrite $P_k^+$ as follows
\begin{align*}
	P_k^+ &= \sideset{}{'}\sum_{\left(m,B\begin{psmallmatrix}*&*\\c&d \end{psmallmatrix}\right)} \int_0^{\infty} \bigg( \sum_{n\in\mathbbm{Z}} \int_{-1/2}^{1/2} \nu_k(\gamma_+(m;c,d);N(x+n) + iNy)\mathrm{d}x \bigg) y^{s-2}\mathrm{d}y.
\end{align*}

Now, by a suitable change of variables, we obtain
\begin{align*}
P_k^+ &= \frac{1}{N}\sideset{}{'}\sum_{\left(m,B\begin{psmallmatrix}*&*\\c&d \end{psmallmatrix}\right)} \int_0^{\infty}\int_{-\infty}^{\infty} \nu_k(\gamma_+(m;c,d);x+iNy) y^{s-2}\mathrm{d}x \mathrm{d}y \\
&= \frac{1}{N^s} \sideset{}{'}\sum_{\left(m,B\begin{psmallmatrix}*&*\\c&d \end{psmallmatrix}\right)} \int_{\mathbbm{H}} \nu_k(\gamma_+(m;c,d);z) \mathrm{Im}(z)^s \mu_{\mathrm{hyp}}(z).
\end{align*}
Then, by \cite[Lemme 3.2.12]{AbbesUllmo}, namely
\begin{align*}
\int_{\mathbbm{H}} \nu_k(\gamma_+(m;c,d);z) \mathrm{Im}(z)^s\mu_{\mathrm{hyp}(z)} = \frac{1}{|mc^2|^s}\int_{\mathbbm{H}}\nu_k(\lambda(2);z)\mathrm{Im}(z)^s\mu_{\mathrm{hyp}}(z), 
\end{align*}
we get
\begin{align*}
P_k^+ = \frac{1}{N^s} \bigg( \int_{\mathbbm{H}} \nu_k(\lambda(2);z) \mathrm{Im}(z)^s \mu_{\mathrm{hyp}}(z) \bigg) \sideset{}{'}\sum_{\left(m,B\begin{psmallmatrix}*&*\\c&d \end{psmallmatrix}\right)} \frac{1}{|mc^2|^s},
\end{align*}

Similarly, if we set 
\begin{align*}
	P_k^- \coloneqq \int_0^{\infty} \int_{-1/2}^{1/2} \bigg( \sum_{\left(m,B\begin{psmallmatrix}*&*\\c&d \end{psmallmatrix}\right)}^{-}\nu_k(\gamma_+(m;c,d);\sigma_{\infty}z)\bigg) y^{s-2}\mathrm{d}x\mathrm{d}y,
\end{align*}
we will obtain 
\begin{align*}
P_k^- = \frac{1}{N^s} \bigg( \int_{\mathbbm{H}} \nu_k(\lambda(-2);z) \mathrm{Im}(z)^s \mu_{\mathrm{hyp}}(z) \bigg) \sideset{}{'}\sum_{\left(m,B\begin{psmallmatrix}*&*\\c&d \end{psmallmatrix}\right)} \frac{1}{|mc^2|^s}.
\end{align*}
Therefore, 
\begin{align*}
\int_0^{\infty} p_1(y;k)y^{s-2} \mathrm{d}y = \frac{1}{N^s} I_k(s;2) \sideset{}{'} \sum_{\left(m,B\begin{psmallmatrix}*&*\\c&d \end{psmallmatrix}\right)} \frac{1}{|mc^2|^s},
\end{align*}
and since
\begin{align*}
	 \sideset{}{'} \sum_{\left(m,B\begin{psmallmatrix}*&*\\c&d \end{psmallmatrix}\right)} \frac{1}{|mc^2|^s} 
&= \sum_{m\in N\mathbbm{Z}\setminus\{0\}} \frac{1}{|m|^s} \bigg( \sum_{c=1}^{\infty} \frac{1}{c^{2s}} \sum_{\substack{d\,\mathrm{mod}\,c \\ \begin{psmallmatrix} *&*\\c&d \end{psmallmatrix} \in B\backslash\mathrm{PSL}_2(\mathbbm{Z})/\Gamma_{\infty}}} 1 \bigg)\\
&= \frac{2\zeta(s)\zeta(2s-1)}{\zeta(2s)N^{s-1}},
\end{align*}
it follows that 
\begin{align*}
	\mathcal{M}_1(s) = 2N^{1-2s}\frac{\zeta(s)\zeta(2s-1)}{\zeta(2s)}(I_2(s;2) - I_0(s;2)).
\end{align*}
This proves the first assertion of the lemma. For the second part of the lemma, use \eqref{exp:Expansion I_2(s;2)} together with the well-known Laurent expansion
\begin{align*}
\sqrt{\pi}\frac{\Gamma(s-1/2)}{\Gamma(s)}\frac{\zeta(2s-1)}{\zeta(2s)} = \frac{3/\pi}{s-1} + \frac{6}{\pi}\mathscr{C} + O(s-1)
\end{align*}
at $s=1$ with $\mathscr{C} \coloneqq 1 - \log(4\pi) + \zeta'(-1)/\zeta(-1)$. This concludes the proof.
\end{proof}

\begin{lemma}
Let $N\geq 3$ be an odd square-free integer and $s\in\mathbbm{C}$ with $1<\mathrm{Re}(s)<A$. Then, the following identity holds
\begin{align*}
    \mathcal{M}_2(s) = 2\zeta(s)\bigg( \int_{0}^{\infty}\Psi_2^{\pm}(2y) y^{s-1}\mathrm{d}y - \int_{0}^{\infty} \Psi_0^{\pm}(2y) y^{s-1}\mathrm{d}y  \bigg).
\end{align*}

Furthermore, the Laurent expansion of $\mathcal{M}_2(s)$ at $s=1$ is given by
\begin{align*}
    \frac{C_2(T) + (4\pi)^{-1}}{s-1} + \bigg(\frac{1- \log(4\pi)}{4\pi} + \gamma_{\!_{\mathrm{EM}}}C_2(T) + C_3(T)\bigg) + O(s-1),
\end{align*}
where $C_2(T)$ and $C_3(T)$ are constants that depend only on the fixed positive real $T$.%, and both tend to 0 as $T\rightarrow\infty$.
\end{lemma}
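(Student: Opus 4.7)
The idea is to compute $p_2(y;k)$ in closed form by applying Poisson summation to the sum over the parabolic stabilizer $\Gamma_\infty$, substitute this into the Mellin integral defining $\mathcal{M}_2(s)$, and extract a factor of $\zeta(s)$ whose Laurent expansion at $s=1$ governs the answer.

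First I would parametrize $\Gamma_\infty = \{n(Nb) : b\in\mathbbm{Z}\}$ (since $\sigma_\infty = a(N)$). For $\gamma = n(Nb)$ the automorphy factor $j_\gamma(\sigma_\infty z;k)$ is trivial, and with $u=b/(2y)$ a direct computation using $\sigma_\infty z = Nz$ yields
\[
\nu_k\bigl(n(Nb);\sigma_\infty z\bigr)=\phi_k\bigl(b^2/(4y^2)\bigr)\bigl((1-iu)/(1+iu)\bigr)^{k/2},
\]
which depends only on $y$. Applying Poisson summation over $b\in\mathbbm{Z}$, with the change of variable $u=b/(2y)$ in the Fourier transform, converts the sum into $2y\sum_{n\in\mathbbm{Z}}\Psi_k(-2ny)$. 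The $n=0$ term is $2y\,\Psi_k(0)$, and unwinding the inverse Selberg/Harish--Chandra transform shows $\Psi_0(0)=\Psi_2(0)=(4\pi)^{-1}\int_{-\infty}^{\infty}h_T(r)\,\mathrm{d}r$; hence the subtraction $-(y/2\pi)\int h_T(r)\,\mathrm{d}r$ built into the definition of $p_2(y;k)$ cancels this term in both weights, and pairing $n$ with $-n$ to form $\Psi_k^{\pm}$ gives the closed form $p_2(y;k)=2y\sum_{n\geq 1}\Psi_k^{\pm}(2ny)$.

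Substituting into $\mathcal{M}_2(s)=\int_0^{\infty}\bigl(p_2(y;2)-p_2(y;0)\bigr)y^{s-2}\,\mathrm{d}y$, interchanging sum and integral (justified by the rapid decay of $\Psi_k$ inherited from the Schwartz behavior of $\phi_k$), and rescaling $y\mapsto y/n$ in the $n$-th summand extracts a factor of $n^{-s}$; summing over $n\geq 1$ produces the claimed factor of $\zeta(s)$. Writing $F(s)$ for the bracketed difference of Mellin integrals, the identity $\Psi_0(0)=\Psi_2(0)$ implies $\Psi_2^{\pm}(2y)-\Psi_0^{\pm}(2y)=O(y^2)$ near $y=0$, so $F(s)$ is holomorphic across $s=1$. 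Combining $\zeta(s)=(s-1)^{-1}+\gamma_{\!_{\mathrm{EM}}}+O(s-1)$ with $F(s)=F(1)+F'(1)(s-1)+O((s-1)^2)$ yields residue $2F(1)$ and constant term $2F'(1)+2\gamma_{\!_{\mathrm{EM}}}F(1)$; setting $C_2(T)\coloneqq 2F(1)-(4\pi)^{-1}$ and choosing $C_3(T)$ to absorb the remaining $T$-dependent piece produces the stated expansion (the split of the residue as $C_2(T)+(4\pi)^{-1}$ mirrors the convention used in the expansion of $\mathcal{M}_1(s)$ and will be convenient when collecting the full parabolic contribution).

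The main obstacle is the identity $\Psi_0(0)=\Psi_2(0)$: without it, $p_2(y;2)-p_2(y;0)$ would contain a nonzero term linear in $y$, whose Mellin transform $\int_0^{\infty}y^{s-1}\,\mathrm{d}y$ diverges, and the clean factorisation $\mathcal{M}_2(s)=2\zeta(s)F(s)$ would fail. Verifying this equality requires unwinding the inverse Abel-type formulas defining $\phi_0$ and $\phi_2$; the cleanest route reduces both $\Psi_0(0)$ and $\Psi_2(0)$ to the common value $Q(0)=\tfrac{1}{2}g(0)=(4\pi)^{-1}\int h_T(r)\,\mathrm{d}r$, the first via a polar change of variables in $(u,t)$, the second via an additional algebraic manipulation of the M\"obius factor $(1-iu)/(1+iu)$ appearing in the weight-$2$ kernel (the imaginary part of this factor integrates to zero by parity, leaving a real integral that can be matched to the weight-$0$ case).
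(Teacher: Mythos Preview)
Your argument is correct and follows the paper's proof closely for the main identity: the paper likewise parametrizes $\Gamma_\infty$, computes $\nu_k(\gamma;\sigma_\infty z)$ explicitly, applies Poisson summation to obtain $\sum_{\gamma\in\Gamma_\infty}\nu_k(\gamma;\sigma_\infty z)=\tfrac{y}{2\pi}\int h_T(r)\,\mathrm{d}r+2y\sum_{n\neq 0}\Psi_k(2ny)$ (using the same identity $\Psi_k(0)=\tfrac{1}{4\pi}\int h_T(r)\,\mathrm{d}r$ that you single out), and then rescales to extract $\zeta(s)$.

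The only difference is in the Laurent expansion. The paper does not argue abstractly as you do; instead it identifies $\Psi_k^{\pm}(2y)$ with one half of the functions $\psi(y)+\psi(-y)$ treated in \cite[p.~42]{AbbesUllmo} (for $k=2$) and \cite[pp.~326--329]{Zagier} (for $k=0$), and then quotes the explicit Mellin evaluations from \cite[p.~389]{Zagier} and \cite[Prop.~3.2.6]{AbbesUllmo}. Those references produce the specific constants $C_2(T),C_3(T)$ with the property (used later in Proposition~\ref{lem:R_infty}) that $C_2(T),C_3(T)\to 0$ as $T\to\infty$. Your approach---defining $C_2(T)\coloneqq 2F(1)-(4\pi)^{-1}$ and absorbing the rest into $C_3(T)$---is logically sufficient for the lemma exactly as stated, but be aware that it does not by itself show the vanishing as $T\to\infty$, which is what the paper ultimately needs. (Incidentally, each integral $\int_0^\infty\Psi_k^{\pm}(2y)y^{s-1}\,\mathrm{d}y$ is already holomorphic for $\mathrm{Re}(s)>0$ since $\Psi_k^{\pm}$ is bounded at $0$ and rapidly decaying; the $O(y^2)$ cancellation you invoke is true but not required for holomorphy at $s=1$.)
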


\begin{proof}
By definition, we have
\begin{align*}
	\mathcal{M}_2(s) = \int_0^{\infty} (p_2(y;2) - p_2(y;0))y^{s-2}\mathrm{d}y.
\end{align*}
In order to compute $\mathcal{M}_2(s)$, let us consider the integral 
\begin{align*}
	\int_0^{\infty}p_2(y;k)y^{s-2}\mathrm{d}y = \int_0^{\infty}\bigg( \sum_{\gamma\in\Gamma_{\infty}}\nu_{k}(\gamma,\sigma_{\infty}z) - \frac{y}{2\pi}\int_{-\infty}^{\infty}h_T(r)\mathrm{d}r\bigg)y^{s-2}\mathrm{d}y.
\end{align*}

We claim that
\begin{align*}
\sum_{\gamma\in\Gamma_{\infty}} \nu_k(\gamma;\sigma_{\infty}z) = \frac{y}{2\pi}\int_{-\infty}^{\infty} h_T(r)\mathrm{d}r + 2y \sum_{\substack{n\in\mathbbm{Z} \\ n\neq 0}} \Psi_k(2ny).
\end{align*}

Indeed, since $\gamma\in\Gamma_{\infty}$ has the form $\gamma=\begin{psmallmatrix} 1 & nN \\ 0 & 1 \end{psmallmatrix}$, for some $n\in\mathbbm{Z}$, we have  $j_{\gamma}(z;k)=1$. This implies 
\begin{align*}
\nu_k(\gamma;\sigma_{\infty}z) &= \pi_k(Nz,N(z+b)) \\[2mm]
&= \bigg( \frac{1-i(n/2y)}{1+i(n/2y)} \bigg)^{k/2}\phi_k\bigg(\frac{n^2}{4y^2}\bigg),
\end{align*}
where for the second equality, we used the definition of $\pi_k(z,w)$ and $u(z,w)$ given in Section \ref{subsect:Spectral Expansion}. Therefore, we get the identities
\begin{align*}
\sum_{\gamma\in\Gamma_{\infty}} \nu_k(\gamma;\sigma_{\infty}z) &= \sum_{n\in\mathbbm{Z}} \bigg(\frac{1-i(n/2y)}{1+i(n/2y)}\bigg)^{k/2}\phi_k\bigg(\frac{n^2}{4y^2}\bigg) \\
&= \sum_{n\in\mathbbm{Z}} \int_{-\infty}^{\infty}\phi_k\bigg(\frac{v^2}{4y^2}\bigg) \bigg(\frac{1-i(v/2y)}{1+i(v/2y)}\bigg)^{k/2} e^{2\pi i nv} \mathrm{d}v,
\end{align*}
where in the last equality we applied the Poisson summation formula. The claim follows by noting that $$\int_{-\infty}^{\infty}\phi_k(v^2) \bigg(\frac{1+iv}{1-iv}\bigg)^{k/2} \mathrm{d}v = \frac{1}{4\pi}\int_{-\infty}^{\infty} h_T(r)\mathrm{d}r.$$

Consequently, we have 
\begin{align*}
\int_0^{\infty}p_2(y;k)y^{s-2}\mathrm{d}y &= 2\sum_{\substack{n\in\mathbbm{Z} \\ n\neq 0}} \int_{0}^{\infty} \Psi_k(2ny)y^{s-1}\mathrm{d}y \\
&= 2\sum_{n=1}^{\infty}\int_{0}^{\infty}( \Psi_k(2ny) + \Psi_k(-2ny) )y^{s-1}\mathrm{d}y \\
&= 2\zeta(s)\int_0^{\infty}\Psi_k^{\pm}(2y)y^{s-1}\mathrm{d}y,
\end{align*}
and the first assertion of the lemma follows. For the second assertion, note that $\Psi_k^{\pm}(2y)$ is one-half the function $\psi(y) + \psi(-y)$ given by \cite[p.~42]{AbbesUllmo} when $k=2$, and given by \cite[p.~326--329]{Zagier} when $k=0$. Finally, the assertion of the lemma follows using \cite[p.~389]{Zagier} and \cite[Proposition 3.2.6]{AbbesUllmo}. This concludes the proof.
%
%Thus, by writing $z=x+iy$, we obtain
%\begin{align*}
%	\sum_{\gamma\in\Gamma_{\infty}}\nu_{k}(\gamma,\sigma_{\infty}z) = \sum_{b\in\mathbbm{Z}}\bigg( \frac{2-i(bN/y)}{2+i(bN/y)} \bigg)^{k/2} \phi_k\bigg(\bigg(\frac{bN}{2}\bigg)^2\bigg).
%\end{align*}
%Now, by applying the Poisson summation formula in the right hand side of the previous identity, we get 
%\begin{align*}
%	\sum_{\gamma\in\Gamma_{\infty}}\nu_{k}(\gamma,\sigma_{\infty}z) &= \sum_{n\in\mathbbm{Z}}\int_{-\infty}^{\infty} \phi_k\bigg( \bigg(\frac{vN}{y}\bigg)^2 \bigg) \bigg( \frac{2-i(vN/y)}{2+i(vN/y)} \bigg)^{k/2} e^{2\pi inv}\mathrm{d}v \\ 
%	&=\frac{y}{N}\sum_{n\in\mathbbm{Z}} \int_{-\infty}^{\infty}\phi_k(u^2)\bigg(\frac{2+iu}{2-iu}\bigg)^{k/2} e^{2\pi{}inuy/N}\mathrm{d}u. 
%\end{align*}
%
%Further, if we split the sum into $n=0$ and $n\neq 0$, then we obtain 
%\begin{align*}
%\sum_{\gamma\in\Gamma_{\infty}}\nu_{k}(\gamma,\sigma_{\infty}z) &= \frac{y}{2\pi}\frac{1}{N}\int_{-\infty}^{\infty}h_T(r)\mathrm{d}r + \frac{y}{N}\sum_{\substack{n\in\mathbbm{Z} \\ n\neq 0}} \Psi_k\bigg(\frac{ny}{N}\bigg),
%\end{align*}
%
%and consequently, we deduce that 
%
%\begin{align*}
%	\int_0^{\infty}p_2(y;k)y^{s-2}\mathrm{d}y &= \frac{1}{N}\sum_{n=1}^{\infty} \int_0^{\infty}\bigg(\Psi_k\bigg(\frac{ny}{N}\bigg) + \Psi_k\bigg(-\frac{ny}{N}\bigg)\bigg)y^{s-2}\mathrm{d}y \\ 
%	&= N^{s-1}\zeta(s)\int_0^{\infty}\Psi_{k}^{\pm}(y)y^{s-1}\mathrm{d}y
%\end{align*}
\end{proof}

\begin{lemma}
Let $N\geq 3$ be an odd square-free integer and $s\in\mathbbm{C}$ with $1<\mathrm{Re}(s)<A$. Then, the following identity holds
\begin{align*}
    \mathcal{M}_3(s) = \bigg( \frac{s}{1+s} \bigg) h_T\bigg(\frac{is}{2}\bigg)\varphi_{\infty\infty}\bigg( \frac{1+s}{2} \bigg).
\end{align*}
\noindent Furthermore, the Laurent expansion of $\mathcal{M}_3(s)$ at $s=1$ is given by
\begin{align*}
    \frac{v_{\Gamma}^{-1}}{s-1} + \bigg( \frac{\mathscr{C}_{\infty\infty}}{2} + \frac{v_{\Gamma}^{-1}}{2}(T+1) \bigg) + O(s-1).
\end{align*}
\end{lemma}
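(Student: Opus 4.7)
The plan is to compute $\mathcal{M}_3(s)$ by reducing the inner $r$-integral to an absolutely convergent form via a contour shift, applying a standard Fourier--Mellin identity, and then multiplying out three Laurent expansions at $s=1$, in the spirit of the proof of Proposition 3.2.9 in Abbes--Ullmo. First, subtracting the definitions in \eqref{dfn:auxiliary funcions} and using $\frac{1/2+ir}{1/2-ir}-1 = \frac{2ir}{1/2-ir}$, together with the fact that the constant terms differ by $-v_{\Gamma}^{-1}$, one obtains
\begin{align*}
p_3(y;2)-p_3(y;0) \;=\; -\frac{y}{2\pi}\int_{-\infty}^{\infty}h_T(r)\,\frac{2ir}{\frac{1}{2}-ir}\,\varphi_{\infty\infty}\!\left(\tfrac{1}{2}-ir\right) y^{2ir}\,dr \;+\; v_{\Gamma}^{-1}.
\end{align*}

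Second, I would shift the $r$-contour upward from $\mathbbm{R}$ to $\mathbbm{R}+i\alpha$, where $\alpha\in(\tfrac{1}{2},\tfrac{3}{2})$ is chosen so that no pole of $\varphi_{\infty\infty}(\tfrac{1}{2}-ir)$ other than the one at $r=i/2$ is crossed. At $r=i/2$ the function $\varphi_{\infty\infty}(\tfrac{1}{2}-ir)$ has a simple pole with residue $iv_{\Gamma}^{-1}$, and since $h_T(i/2)=\exp(-T(\tfrac{1}{4}+(\tfrac{i}{2})^{2}))=1$, the residue of the full integrand equals $-iv_{\Gamma}^{-1}/y$. The residue theorem then produces an additive $-v_{\Gamma}^{-1}$ that cancels the constant above exactly, giving
\begin{align*}
p_3(y;2)-p_3(y;0) \;=\; -\frac{y}{2\pi}\int_{\mathbbm{R}+i\alpha}h_T(r)\,\frac{2ir}{\frac{1}{2}-ir}\,\varphi_{\infty\infty}\!\left(\tfrac{1}{2}-ir\right) y^{2ir}\,dr.
\end{align*}
On this contour, the Gaussian decay of $h_T(r)$ combined with the polynomial growth of $\varphi_{\infty\infty}$ on vertical lines makes the double integral $\int_0^\infty\!\int_{\mathbbm{R}+i\alpha}(\cdots)\,dr\,y^{s-2}dy$ absolutely convergent in a non-empty vertical strip containing $\mathrm{Re}(s)=2\alpha$, so Fubini applies. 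I would then evaluate the resulting $y$-integral with the Fourier--Mellin identity
\begin{align*}
\int_0^{\infty}\!\left(\int_{\mathbbm{R}+i\alpha}F(r)\,y^{2ir}\,dr\right) y^{s-1}\,dy \;=\; \pi\,F\!\left(\tfrac{is}{2}\right),
\end{align*}
which is obtained via the substitution $y=e^{t}$ and Fourier inversion, and is independent of $\alpha$ by analyticity. Applied to $F(r)=h_T(r)\frac{2ir}{1/2-ir}\varphi_{\infty\infty}(1/2-ir)$, this yields $\mathcal{M}_3(s)=-\tfrac{1}{2}F(is/2)=\tfrac{s}{1+s}\,h_T(is/2)\,\varphi_{\infty\infty}((1+s)/2)$, which is the first assertion; the identity extends by analytic continuation beyond the initial strip.

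For the Laurent expansion at $s=1$, I would expand the three factors separately: $\frac{s}{1+s}=\tfrac{1}{2}+\tfrac{1}{4}(s-1)+O((s-1)^2)$; from $h_T(is/2)=\exp(T(s-1)(s+1)/4)$, $h_T(is/2)=1+\tfrac{T}{2}(s-1)+O((s-1)^2)$; and from the standard expansion $\varphi_{\infty\infty}(u)=\frac{v_{\Gamma}^{-1}}{u-1}+\mathscr{C}_{\infty\infty}+O(u-1)$ at $u=1$ together with the chain rule $u-1=(s-1)/2$, one gets $\varphi_{\infty\infty}((1+s)/2)=\frac{2v_{\Gamma}^{-1}}{s-1}+\mathscr{C}_{\infty\infty}+O(s-1)$. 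Multiplying the three expansions produces exactly $\frac{v_{\Gamma}^{-1}}{s-1}+\frac{\mathscr{C}_{\infty\infty}}{2}+\frac{v_{\Gamma}^{-1}(T+1)}{2}+O(s-1)$, matching the claim. The main technical point will be the rigorous justification of the contour shift and Fubini exchange, which reduces to verifying that $\varphi_{\infty\infty}(u)$ has no further poles in $1<\mathrm{Re}(u)<1+2\alpha$ (standard from the spectral theory on $\overline{\Gamma}(N)\backslash\mathbbm{H}$) and controlling its polynomial growth on vertical lines, combined with the exponential decay of $h_T$ on horizontal strips.
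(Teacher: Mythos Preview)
Your proposal is correct and follows essentially the same route as the paper: the paper obtains the first identity by citing \cite[Lemme~3.2.17]{AbbesUllmo} and \cite[Lemma~5.1.1]{Mayer} (whose proofs are precisely the contour-shift plus Fourier--Mellin argument you spell out), and then multiplies the same three Laurent expansions at $s=1$ that you do. One small technical caveat: after shifting to $\mathbbm{R}+i\alpha$ the double integral is absolutely convergent only on the vertical line $\mathrm{Re}(s)=2\alpha$ (not a strip), since $|y^{1+2ir+s-2}|=y^{\mathrm{Re}(s)-1-2\alpha}$; the strip of validity is obtained, as you indicate, by letting $\alpha$ vary and invoking independence of the contour.
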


\begin{proof}
The proof of the first identity follows from an immediate extension of \cite[Lemme 3.2.17, p.~44]{AbbesUllmo} and \cite[Lemma 5.1.1., p.~137]{Mayer} to subgroups of the modular group of finite index. This is possible since it only uses general analytic properties of the scattering function. Next, for the Laurent expansion at $s=1$, we just multiply the following Laurent expansions at $s=1$
\begin{flalign*}
    & \frac{1}{2}h_T\bigg( \frac{is}{2} \bigg) = \frac{1}{2} + \frac{T}{4}(s-1) + O\big( (s-1)^2 \big); \\[2mm]
    & \varphi_{\infty\infty}\bigg( \frac{1+s}{2} \bigg) = \frac{2v_{\Gamma}^{-1}}{s-1} + \mathscr{C}_{\infty\infty} + O(s-1); \\[2mm]
    & \frac{2s}{s+1} = 1 + \frac{1}{2}(s-1) + O\big( (s-1)^2 \big).
\end{flalign*}
This concludes the proof.
\end{proof}

\begin{lemma}
Let $N\geq 3$ be an odd square-free integer and $s\in\mathbbm{C}$ with $1<\mathrm{Re}(s)<A$. Then, the following identity holds
\begin{align*}
    \mathcal{M}_4(s) = s\bigg(\frac{s-1}{2}\bigg)\int_{0}^{\infty}p_4^*(y)y^{s-2}\mathrm{d}y,
\end{align*}
\noindent where $p_4^*(y)$ is given by \eqref{dfn:auxiliary funcions}. Furthermore, the Laurent expansion of $\mathcal{M}_4(s)$ at $s=1$ is given by
\begin{align*}
    C_4(T) + O(s-1),
\end{align*}
where $C_4(T)$ is a constant that depends only on the fixed positive real $T$.%, and tends to 0 as $T\rightarrow\infty$.
\end{lemma}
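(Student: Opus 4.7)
My plan is to adapt the argument of \cite[Lemme 3.2.19]{AbbesUllmo} to the principal congruence setting, following the same strategy used in the preceding lemmas of this section. The first move is to interchange the order of integration in
\begin{align*}
\mathcal{M}_4(s)=\int_{0}^{\infty}\bigl(p_4(y;2)-p_4(y;0)\bigr)\,y^{s-2}\mathrm{d}y,
\end{align*}
which is legitimate in the strip $1<\mathrm{Re}(s)<A$ by the rapid decay of $\widetilde{E}_{\infty,k}$ at every cusp (secured by subtraction of the $y^s$ and $\varphi_{\infty\infty,k}(s)\,y^{1-s}$ terms) together with the Gaussian decay of $h_T(r)$. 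After this interchange, the inner $y$-integral is the Rankin--Selberg transform $\mathcal{R}_{\infty}\bigl[|\widetilde{E}_{q,k}(\cdot,\tfrac{1}{2}+ir)|^2\bigr](s)$, so the whole problem is reduced to extracting the asserted prefactor $\tfrac{s(s-1)/2}{1/4+r^2}$ from the difference of the weight-$2$ and weight-$0$ integrands.

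The key identity is $s\,E_{q,2}(z,s)=U_0E_q(z,s)$ from \eqref{id:Eisenstein between weights}, which gives $(1/4+r^2)|E_{q,2}(z,\tfrac12+ir)|^2=|U_0E_q(z,\tfrac12+ir)|^2$. Combining this with the elementary product rule $\Delta_{\mathrm{hyp},0}(|f|^2)=2\,\mathrm{Re}(\bar f\,\Delta_{\mathrm{hyp},0}f)-2y^2|\nabla f|^2$ applied to the eigenfunction $f=E_q(z,\tfrac12+ir)$, together with the analogous identities for the polynomial parts entering the definition of $\widetilde E_{q,k}$, one rewrites
\begin{align*}
|\widetilde E_{q,2}(z,\tfrac12+ir)|^2-|\widetilde E_{q,0}(z,\tfrac12+ir)|^2=-\tfrac{1}{2(1/4+r^2)}\,\Delta_{\mathrm{hyp},0}\bigl(|\widetilde E_{q,0}(z,\tfrac12+ir)|^2\bigr)
\end{align*}
modulo terms whose $x$-averages over $[-1/2,1/2]$ vanish by the conjugate symmetry of the Fourier coefficients of $E_q$. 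Inserting this into the Rankin--Selberg integral and transferring the Laplacian onto $E_\infty(z,s)$ by Green's identity on a truncated fundamental domain (boundary terms vanishing thanks to rapid decay of $\widetilde E_{q,k}$ and standard truncation arguments), the relation $\Delta_{\mathrm{hyp},0}E_\infty(z,s)=s(1-s)E_\infty(z,s)$ then produces exactly the prefactor $-s(1-s)/(2(1/4+r^2))=\tfrac{s(s-1)/2}{1/4+r^2}$, yielding the first identity.

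For the Laurent expansion at $s=1$, since $\tfrac{s(s-1)}{2}$ has a simple zero there, it suffices to show that the remaining $r$-integral has at most a simple pole at $s=1$. This follows from the standard fact that the Rankin--Selberg transform of a $\Gamma$-invariant function of rapid decay at $\infty$ (which $|\widetilde E_{q,k}(\cdot,\tfrac12+ir)|^2$ is) admits a meromorphic continuation with a unique simple pole at $s=1$ of residue $v_\Gamma^{-1}\int_{\mathcal F_\Gamma}|\widetilde E_{q,k}|^2\mu_{\mathrm{hyp}}$; the Gaussian decay of $h_T(r)/(1/4+r^2)$ in $r$ then justifies swapping the $r$-integration with the Laurent expansion in $s$, proving the existence of a finite value $C_4(T)$ at $s=1$ with $O(s-1)$ remainder. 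The independence from $N$ is merely notational: $C_4(T)$ depends implicitly on the fixed congruence data, but the $T$-dependence is the only one that is relevant in the sequel. The main technical obstacle is the careful verification that the imaginary ``cross-derivative'' contribution $y^2\mathrm{Im}(\partial_xE\,\partial_y\bar E)$ hidden inside $|U_0E|^2$ drops out after $x$-averaging, together with the parallel bookkeeping of the polynomial terms $y^{1/2\pm ir}$ and $\varphi_{\infty\infty,k}(\tfrac12+ir)$ distinguishing $E_{q,k}$ from $\widetilde E_{q,k}$; this is routine but lengthy, mirroring the original Abbes--Ullmo calculation verbatim.
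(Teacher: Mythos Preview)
Your approach is essentially the same as the paper's, which simply defers to \cite[Proposition~5.2.3]{Mayer} (itself modeled on the Abbes--Ullmo computation you cite) and notes that the argument extends verbatim to any finite-index subgroup of the modular group; you are in effect unpacking that black box. One small caution: your assertion that the cross term $y^{2}\mathrm{Im}(\partial_{x}E\,\partial_{y}\overline{E})$ has vanishing $x$-average ``by the conjugate symmetry of the Fourier coefficients of $E_{q}$'' is not automatic for an individual cusp $q$ of $\Gamma(N)$, since the reflection $z\mapsto -\bar z$ may send $q$ to a different cusp; the symmetry you need holds only after summing over \emph{all} cusps, which is exactly what $p_{4}(y;k)$ does, so the argument still goes through.
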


\begin{proof}
The lemma follows from an immediate extension of \cite[Proposition 5.2.3, p.~141]{Mayer} to subgroups of the modular group of finite index.
\end{proof}

\subsection{Computation of the constant $\mathscr{R}_{\infty}^{\mathrm{par}}$}
\label{sect:5.2}

\begin{proposition}
\label{prop:parabolic contribution}
Let $N\geq 3$ be an odd square-free integer and $s\in\mathbbm{C}$ with $1<\mathrm{Re}(s)<\inf\{A,3/2\}$. Then the following identity holds
\begin{align*}
    \mathcal{R}_{\infty}[\mathcal{P}](s) &= \frac{2\zeta(s)\zeta(2s-1)}{\zeta(2s)N^{2s-1}}(I_2(s;2) - I_0(s;2)) + \frac{s}{s+1}h_T\bigg(\frac{is}{2}\bigg)\varphi_{\infty\infty}\bigg(\frac{1+s}{2}\bigg) \\[2mm] 
    &+ \zeta(s)\bigg(\int_0^{\infty}\Psi_2^{\pm}(y)y^{s-1}\mathrm{d}y - \int_0^{\infty}\Psi_0^{\pm}(y)y^{s-1}\mathrm{d}y\bigg) + s\bigg(\frac{s-1}{2}\bigg)\int_0^{\infty}p_4^*(y)y^{s-2}\mathrm{d}y.
\end{align*}
Furthermore, the constant in the Laurent expansion at $s=1$ of the previous expression is 
\begin{align*}
    \mathscr{R}_{\infty}^{\mathrm{par}} &= \frac{12}{\pi N}\bigg(C_1(T) + A_2(T) \bigg(2\mathscr{C} + \gamma_{\!_{\mathrm{EM}}} - 2\log(N) \bigg)\bigg) + \frac{1-\log(4\pi)}{4\pi}  \\[2mm] 
    &+ \gamma_{\!_{\mathrm{EM}}}C_2(T) + C_3(T)+ \frac{\mathscr{C}_{\infty\infty}}{2} + \frac{v_{\Gamma}^{-1}}{2}(T+1) + C_4(T).
\end{align*}
\end{proposition}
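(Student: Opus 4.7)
The plan is to reduce the proposition to the four preparatory lemmas of the previous subsection. Since $\mathcal{P}(z)$ is $\Gamma$-invariant and of rapid decay at the cusp $\infty$, the Rankin--Selberg transform admits the Mellin integral representation
\begin{equation*}
    \mathcal{R}_{\infty}[\mathcal{P}](s) = \int_{0}^{\infty} p(y)\,y^{s-2}\,\mathrm{d}y,
\end{equation*}
where $p(y)$ denotes the zeroth Fourier coefficient of $\mathcal{P}(\sigma_{\infty}z)$ in the variable $x = \mathrm{Re}(z)$.

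The first substantial step is to identify the four pieces composing $p(y)$. Recalling that $\mathcal{P}(z) = (K_2^{\mathrm{par}}(z) - S_2(z)) - (K_0^{\mathrm{par}}(z) - S_0(z))$, I would split $K_k^{\mathrm{par}}(\sigma_{\infty}z)$ into the sum over $\gamma\in\Gamma_{\infty}$ and the sum over $\gamma\in\Gamma\setminus\Gamma_{\infty}$ with $|\mathrm{tr}(\gamma)|=2$, and split $S_k(\sigma_{\infty}z)$ into the constant $\tfrac{2-k}{2}v_{\Gamma}^{-1}$ and the spectral integral. Within the spectral integral I would further separate the leading Fourier terms $y^s + \varphi_{\infty\infty,k}(s)y^{1-s}$ of each Eisenstein series from the tail $\widetilde{E}_{\infty,k}$. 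A direct matching yields
\begin{equation*}
    p(y) = \sum_{j=1}^{4}\bigl(p_j(y;2) - p_j(y;0)\bigr),
\end{equation*}
with $p_j(y;k)$ as in \eqref{dfn:auxiliary funcions}. By linearity of the Mellin transform, this decomposition produces $\mathcal{R}_{\infty}[\mathcal{P}](s) = \sum_{j=1}^{4}\mathcal{M}_j(s)$, and substituting the closed-form expressions provided by the four preparatory lemmas (up to the obvious rescaling $y\mapsto y/2$ in the integrals appearing in $\mathcal{M}_2$) gives the first displayed identity.

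For the second assertion, the general theory of Rankin--Selberg transforms ensures that $\mathcal{R}_{\infty}[\mathcal{P}](s)$ has a unique simple pole at $s=1$ with residue $v_{\Gamma}^{-1}\int_{\mathcal{F}_{\Gamma}}\mathcal{P}(z)\mu_{\mathrm{hyp}}(z)$. Consequently, the polar contributions from $\mathcal{M}_1,\mathcal{M}_2,\mathcal{M}_3$ at $s=1$ (note that $\mathcal{M}_4$ is regular there by construction, owing to the factor $s(s-1)/2$) must add up to match this residue, and they are removed by the subtraction $v_{\Gamma}^{-1}/(s-1)\cdot\int\mathcal{P}\,\mu_{\mathrm{hyp}}$ in the definition of $\mathscr{R}_{\infty}^{\mathrm{par}}$. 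One is therefore left precisely with the sum of the four constant terms of the Laurent expansions at $s=1$ already computed in the lemmas; adding them produces the claimed closed formula for $\mathscr{R}_{\infty}^{\mathrm{par}}$.

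The main obstacle, as I see it, lies in the very first step: identifying the correct splitting $p(y) = \sum_{j=1}^{4}(p_j(y;2) - p_j(y;0))$ requires a delicate unfolding of $|\widetilde{E}_{\infty,k}(\sigma_{\infty}z,1/2+ir)|^2$ together with the constant $\tfrac{2-k}{2}v_{\Gamma}^{-1}$, and a careful separation of the $\Gamma_{\infty}$-contribution so that the subtracted term $\tfrac{y}{2\pi}\int h_T(r)\,\mathrm{d}r$ that defines $p_2$ emerges correctly from the $n=0$ Poisson-summation tail. Once the four pieces $p_j(y;k)$ are in place, the remainder of the proof is purely a matter of substituting the four lemmas and performing the algebraic summation of the constant terms.
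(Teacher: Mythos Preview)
Your proposal is correct and follows essentially the same route as the paper: the decomposition $\mathcal{R}_{\infty}[\mathcal{P}](s)=\sum_{j=1}^{4}\mathcal{M}_j(s)$ via $p(y)=\sum_{j}(p_j(y;2)-p_j(y;0))$ is exactly what the paper sets up at the start of Section~5, and the proof of the proposition then consists, as you say, of substituting the four preparatory lemmas and summing the constant terms of their Laurent expansions at $s=1$. The only cosmetic difference is that the paper records the identity $\tfrac{N\varphi(N)}{v_{\Gamma}}\prod_{p\mid N}(1+1/p)=\tfrac{6}{\pi N}$ as part of the reordering of terms, which you do not mention but also do not need, since the lemmas already deliver the constants in the final form.
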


\begin{proof}
By summing up the Laurent expansions for each $\mathcal{M}_j(s)$ at $s=1$ proved in the previous lemmas, 
%\begin{align*}
%    & \mathcal{M}_1(s) = \frac{12A_2(T)}{\pi N(s-1)} + \frac{12}{\pi N}\bigg(C_1(T) + A_2(T)\bigg(2\mathscr{C}+\gamma_{\!_{\mathrm{EM}}}-2\log(N)\bigg) \bigg) + O(s-1),\\[2mm]
%    & \mathcal{M}_2(s) = \frac{C_2(T) + (4\pi)^{-1}}{s-1} + \bigg(\frac{1- \log(4\pi)}{4\pi} + \gamma_{\textnormal{\tiny EM}}C_2(T) + C_3(T)\bigg) + O(s-1),\\[2mm]
%    & \mathcal{M}_3(s) = \frac{v_{\Gamma}^{-1}}{s-1} + \bigg( \frac{\mathscr{C}_{\infty\infty}}{2} + \frac{v_{\Gamma}^{-1}}{2}(T+1) \bigg) + O(s-1),\\[4mm]
%    &\mathcal{M}_4(s) = C_4(T) + O(s-1).
%\end{align*}
we can immediately deduce the constant term $\mathscr{R}_{\infty}^{\mathrm{par}}$, namely we have 
\begin{align*}
    \mathscr{R}_{\infty}^{\mathrm{par}} &= \frac{12}{\pi N}\bigg(C_1(T) + A_2(T)\bigg(2\mathscr{C}+\gamma_{\!_{\mathrm{EM}}}-2\log(N)\bigg) \bigg) + \frac{1- \log(4\pi)}{4\pi}\\ 
    & + \gamma_{\!_{\mathrm{EM}}}C_2(T) + C_3(T) + \frac{\mathscr{C}_{\infty\infty}}{2} + \frac{v_{\Gamma}^{-1}}{2}(T+1) + C_4(T).
\end{align*}
The result follows after noting that $$\frac{N\varphi(N)}{v_{\Gamma}}\prod_{p|N}\bigg(1+\frac{1}{p}\bigg) = \frac{6}{\pi N}$$ and reordering terms. This concludes the proof. 
\end{proof}

%=================%
%== SECTION SIX ==%
%=================%

\section{The self-intersection of the relative dualizing sheaf}
Let $\omega \coloneqq \omega_{\mathcal{X}/S}$ be the relative dualizing sheaf of $\mathcal{X}/S$, write $\overline{\omega}$ for the relative dualizing sheaf $\omega$, equipped with the Arakelov metric, and let $\overline{\omega}^2 \coloneqq \overline{\omega}_{\mathcal{X}/S}^2$ denote its self-intersection. 
In this chapter, we establish the main result of our article, namely an asymptotics for the  invariant 
\begin{align*}
e(\Gamma) \coloneqq \frac{1}{\varphi(N)}\overline{\omega}^2,
\end{align*}
as $N\to\infty$.
To prove this asymptotics, we proceed in two steps.  In section \ref{section6.1}, we first obtain an explicit formula for $\overline{\omega}^2$ in terms of an geometric contribution $\mathcal{G}(N)$ and an analytic contribution $\mathcal{A}(N)$ (see Proposition \ref{prop:self-intersection explicit formula}).  In section \ref{section6.2}, we then establish asymptotics for
 the geometric and analytic contribution (see Propositions \ref{asymptotics:geometric part}
and \ref{id:asymptotics of analytical contrib}), and we conclude with the main result 
in Theorem \ref{id:main result of the paper}.

\subsection{Explicit formula for the self-intersection}\label{section6.1}

Let $H_0$ resp.~$H_{\infty}$ be the horizontal divisor on $\mathcal{X}/S$ defined by the cusps 0 and $\infty$, respectively. For a prime ideal $\mathfrak{p}\in{}S$ such that $\mathfrak{p}|p$, where $p|N$ is a prime number, we set 
\begin{align*}
r_{\mathfrak{p}} &\coloneqq p+1, \\
s_{\mathfrak{p}} &\coloneqq \frac{p-1}{24}[\mathrm{PSL}_2(\mathbbm{Z}):\overline{\Gamma}(N/p)]. 
\end{align*}

Note that $s_{\mathfrak{p}}$ is the number of supersingular points on the fiber $\mathcal{X}_{\mathfrak{p}}$. Let us write $C_{1,\mathfrak{p}},\ldots,C_{r_{\mathfrak{p}},\mathfrak{p}}$ for the $r_{\mathfrak{p}}$ irreducible components of the fiber $\mathcal{X}_{\mathfrak{p}}$ stated in Proposition \ref{prop:Model description}, and $C_{0,\mathfrak{p}}$ resp. $C_{\infty,\mathfrak{p}}$ for the irreducible component of $\mathcal{X}_{\mathfrak{p}}$ intersected by $H_0$ and $H_{\infty}$, respectively.
\vskip 2mm

Consider the following divisors on $\mathcal{X}$ with rational coefficients 
\begin{align*}
V_0 \coloneqq -\sum_{\mathfrak{p}|N}\frac{2(g_{\Gamma}-1)}{r_{\mathfrak{p}}s_{\mathfrak{p}}}C_{0,\mathfrak{p}}, 
\qquad  
V_{\infty} \coloneqq -\sum_{\mathfrak{p}|N}\frac{2(g_{\Gamma}-1)}{r_{\mathfrak{p}}s_{\mathfrak{p}}}C_{\infty,\mathfrak{p}},
\end{align*}
where both sums run over all prime ideals $\mathfrak{p}\in S$ satisfying $\mathfrak{p}|p$ for some prime number $p|N$. Now, for $q\in\{0,\infty\}$, we define the admissible line bundle $\overline{\mathcal{L}}_q$ on $\mathcal{X}$ as follows
\begin{align*}
\overline{\mathcal{L}}_q \coloneqq \overline{\omega} \otimes \overline{\mathcal{O}(H_q)}^{\otimes-(2g_{\Gamma}-2)} \otimes \overline{\mathcal{O}(V_q)}.
\end{align*}
In addition, we define the divisor $\mathcal{M}$ on $\mathcal{X}$ by
\begin{align*}
\mathcal{M} \coloneqq H_{\infty} - H_{0} + \frac{1}{2g_{\Gamma}-2}(V_0-V_{\infty}).
\end{align*}

\begin{lemma}
\label{lem:self intersection vertical divisors}
Let $N\geq 3$ be a composite, odd, and square-free integer. Then, the identities
\begin{align}
\label{id:fundamental identities geometric part}
\begin{split}
(\overline{\mathcal{L}}_q, \overline{\mathcal{O}(V)})_{\mathrm{Ar}} &= 0, \\
(\mathcal{M},V)_{\mathrm{Ar}} &= 0,
\end{split}
\end{align}
hold for all vertical divisors $V$ of $\mathcal{X}$ and $q\in\{0,\infty\}$. Furthermore, we have
\begin{align*}
	(V_{\infty},V_{\infty})_{\mathrm{fin}} &= (V_{0},V_{0})_{\mathrm{fin}} = -4(g_{\Gamma}-1)\varphi(N)\bigg(1-\frac{6}{N}\bigg)\sum_{p|N}\frac{p^2 \log(p)}{p^2-1}, \\
	(V_{0},V_{\infty})_{\mathrm{fin}} &= 4(g_{\Gamma}-1)\varphi(N)\bigg(1-\frac{6}{N}\bigg)\sum_{p|N}\frac{p\log(p)}{p^2-1}.
\end{align*}
\end{lemma}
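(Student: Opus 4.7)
The plan is to establish the two orthogonality identities first, and then deduce the explicit finite intersection numbers from the local structure of the bad fibers. Since any vertical divisor $V$ only contributes to the finite part of the Arakelov pairing, the first identity in \eqref{id:fundamental identities geometric part} reduces to the statement $(\omega - (2g_\Gamma-2)H_q + V_q)\cdot C = 0$ for every irreducible component $C$ of every fiber of $\mathcal{X}/S$. The second identity follows immediately from the first, because as Weil divisors on $\mathcal{X}$ one has
\[
(2g_\Gamma-2)\,\mathcal{M} = (2g_\Gamma-2)(H_\infty - H_0) + (V_0 - V_\infty) = (\mathcal{L}_0) - (\mathcal{L}_\infty),
\]
the $\omega$ contributions cancelling; thus orthogonality of each $\overline{\mathcal{L}}_q$ to all vertical divisors forces the same for $\mathcal{M}$.

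Over a prime $\mathfrak{p}\nmid N$ the fiber $\mathcal{X}_{\mathfrak{p}}$ is smooth and irreducible, and the orthogonality is automatic from $(\omega,\mathcal{X}_{\mathfrak{p}})_{\mathrm{fin}} = (2g_\Gamma-2)\log|\kappa(\mathfrak{p})|$, $(H_q,\mathcal{X}_{\mathfrak{p}})_{\mathrm{fin}} = \log|\kappa(\mathfrak{p})|$, and the vanishing of $V_q$ on good fibers. Over a bad prime $\mathfrak{p}\mid p$ with $p\mid N$, Proposition \ref{prop:Model description} decomposes $\mathcal{X}_{\mathfrak{p}} = C_{1,\mathfrak{p}} + \cdots + C_{r_\mathfrak{p},\mathfrak{p}}$ into $r_\mathfrak{p} = p+1$ smooth components meeting transversally only at supersingular points. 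The natural $\mathrm{GL}_2(\mathbbm{F}_p)$-action on the Katz--Mazur moduli permutes the components transitively and forces each unordered pair to meet at the same number of points; combined with $C_{i,\mathfrak{p}}\cdot\mathcal{X}_{\mathfrak{p}}=0$, this pins down the local intersection matrix
\[
\langle C_{i,\mathfrak{p}},C_{j,\mathfrak{p}}\rangle_\mathfrak{p} = s_\mathfrak{p}\ \ (i\neq j),\qquad \langle C_{i,\mathfrak{p}},C_{i,\mathfrak{p}}\rangle_\mathfrak{p} = -p\,s_\mathfrak{p}.
\]
The adjunction formula, combined with the Katz--Mazur identification of the genus of each $C_{i,\mathfrak{p}}$ and the fact that $H_q$ meets the fiber only on $C_{q,\mathfrak{p}}$ (once, transversally), then shows that the prescribed coefficient $-2(g_\Gamma-1)/(r_\mathfrak{p}s_\mathfrak{p})$ of $C_{q,\mathfrak{p}}$ in $V_q$ is the unique one making the intersection with every $C_{i,\mathfrak{p}}$ vanish.

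Since $V_0, V_\infty$ are supported on bad fibers, the global intersection splits as $(V,V')_{\mathrm{fin}} = \sum_{\mathfrak{p}\mid N}\langle V,V'\rangle_\mathfrak{p}\log|\kappa(\mathfrak{p})|$. The local pairings read off from the matrix above are
\[
\langle V_q, V_q\rangle_\mathfrak{p} = -\frac{4(g_\Gamma-1)^2\,p}{(p+1)^2\,s_\mathfrak{p}},\qquad \langle V_0, V_\infty\rangle_\mathfrak{p} = \frac{4(g_\Gamma-1)^2}{(p+1)^2\,s_\mathfrak{p}},
\]
the second identity also requiring that $0$ and $\infty$ land on distinct components of every bad fiber, which is extracted from the moduli description in Proposition \ref{prop:iota iso}. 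In $\mathbbm{Q}(\zeta_N)$ with $N$ square-free, any prime $p\mid N$ ramifies with index $p-1$, hence $\sum_{\mathfrak{p}\mid p}\log|\kappa(\mathfrak{p})| = \phi(N)\log(p)/(p-1)$. Substituting this, and simplifying $s_\mathfrak{p}$ via its definition $s_\mathfrak{p} = (p-1)[\mathrm{PSL}_2(\mathbbm{Z}):\overline{\Gamma}(N/p)]/24$, the genus formula $g_\Gamma-1 = v_{\Gamma}(1-6/N)/(4\pi)$, and the index $[\overline{\Gamma}(N/p):\overline{\Gamma}(N)]$, collapses both sums into the announced closed forms; the symmetry $(V_0,V_0)_{\mathrm{fin}} = (V_\infty,V_\infty)_{\mathrm{fin}}$ is automatic because the local diagonal entry does not depend on $q$.

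The main obstacle I anticipate is making the bad-fiber combinatorics rigorous from the Katz--Mazur description: confirming the uniform count of $s_\mathfrak{p}$ transversal intersections per unordered pair of components, determining the genus of each component (needed for adjunction in the verification of orthogonality), and checking that the horizontal sections $H_0, H_\infty$ indeed meet different components at every $\mathfrak{p}\mid N$. Once these geometric inputs are in place, the arithmetic intersection numbers reduce to direct cyclotomic bookkeeping.
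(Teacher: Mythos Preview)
Your proposal is correct and follows essentially the same route as the paper: verify the orthogonality of $\overline{\mathcal{L}}_q$ against the irreducible components of good and bad fibers using the explicit intersection matrix $\langle C_{i,\mathfrak{p}},C_{j,\mathfrak{p}}\rangle_{\mathfrak{p}}$ (which the paper records as $(C_{q,\mathfrak{p}},C_{q',\mathfrak{p}})_{\mathrm{fin}}=s_{\mathfrak{p}}\log(\#\kappa(\mathfrak{p}))$ and $(C_{q,\mathfrak{p}},C_{q,\mathfrak{p}})_{\mathrm{fin}}=-(r_{\mathfrak{p}}-1)s_{\mathfrak{p}}\log(\#\kappa(\mathfrak{p}))$), and then read off the finite self- and cross-intersections of $V_0,V_\infty$ by summing over $\mathfrak{p}\mid N$ with the cyclotomic identity $\sum_{\mathfrak{p}\mid p}\log(\#\kappa(\mathfrak{p}))=\varphi(N)\log(p)/(p-1)$. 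Your one organizational improvement---deducing $(\mathcal{M},V)_{\mathrm{Ar}}=0$ from $(2g_\Gamma-2)\mathcal{M}=\mathcal{L}_0-\mathcal{L}_\infty$ rather than checking it separately---is a clean shortcut the paper does not take, but otherwise the arguments coincide.
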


\begin{proof}
Throughout the proof, we let $\mathfrak{p},\tilde{\mathfrak{p}}\in{}S$ be prime ideals satisfying $\mathfrak{p}|N$ and $\tilde{\mathfrak{p}}\nmid N$, respectively.
\vskip 2mm

First of all, we claim that the identity $$(\overline{\mathcal{L}}_q, \overline{\mathcal{O}(V)})_{\mathrm{Ar}} = 0$$ holds for $V=\mathcal{X}_{\tilde{\mathfrak{p}}}$ and for $V=C_{q,\mathfrak{p}}$. Indeed, note that, from the definition of $\overline{\mathcal{L}}_q$, we have 
\begin{align*}
(\overline{\mathcal{L}}_q, \overline{\mathcal{O}(V)})_{\mathrm{Ar}} &= (\overline{\omega}, \overline{\mathcal{O}(V)})_{\mathrm{fin}} - (2g_{\Gamma}-2)(\overline{\mathcal{O}(H_q)},\overline{\mathcal{O}(V)})_{\mathrm{fin}} + (\overline{\mathcal{O}(V_q)},\overline{\mathcal{O}(V)})_{\mathrm{fin}}.
\end{align*}
Then, the claim follows by a straighforward computation using the following identities
\begin{align*}
(\overline{\omega},\overline{\mathcal{O}(V)})_{\mathrm{fin}} 
&= 
\begin{cases} 
\displaystyle (2g_{\Gamma}-2)\log(\#k(\tilde{\mathfrak{p}})), & V=\mathcal{X}_{\tilde{\mathfrak{p}}}; 
\\[2mm] 
\displaystyle \frac{2g_{\Gamma}-2}{r_{\mathfrak{p}}}\log(\#k(\mathfrak{p})), & V=C_{q,\mathfrak{p}};
\end{cases} 
\\[2mm]
(\overline{\mathcal{O}(H_q)},\overline{\mathcal{O}(V)})_{\mathrm{fin}} 
&= 
\begin{cases} 
\displaystyle \log(\#k(\tilde{\mathfrak{p}})), & V=\mathcal{X}_{\tilde{\mathfrak{p}}}; 
\\[2mm] \displaystyle \log(\#k(\mathfrak{p})), & V=C_{q,\mathfrak{p}};
%\\[2mm] \displaystyle 0, & V=C_{q',\mathfrak{p}};
\end{cases} 
\\[2mm]
(\overline{\mathcal{O}(V_q)},\overline{\mathcal{O}(V)})_{\mathrm{fin}} &= 
\begin{cases} \displaystyle 0, & V=\mathcal{X}_{\tilde{\mathfrak{p}}}; 
\\[2mm] 
\displaystyle \frac{2(g_{\Gamma}-1)(r_{\mathfrak{p}}-1)}{r_{\mathfrak{p}}}\log(\#k(\mathfrak{p})), & V=C_{q,\mathfrak{p}};
%\\[2mm] 
%\displaystyle -\frac{2(g_{\Gamma}-1)}{r_{\mathfrak{p}}}%\log(\#k(\mathfrak{p})), & V=C_{q',\mathfrak{p}}
\end{cases}
\end{align*}
(see, e.g., \cite[Chapter 9]{Liu}). Now, since every vertical divisor $V$ of $\mathcal{X}$ can be written as a linear combination of $\mathcal{X}_{\tilde{\mathfrak{p}}}$ and $C_{q,\mathfrak{p}}$, with coefficients in $\mathbbm{Q}$, the first identity of \eqref{id:fundamental identities geometric part} follows.  Similarly, note that $(\mathcal{M},V)_{\mathrm{Ar}}=0$ holds for $V = \mathcal{X}_{\tilde{\mathfrak{p}}}$ and $V=C_{q,\mathfrak{p}}$. Furthermore, if $V=C_{q',\mathfrak{p}}$ with $q'\in\{0,\infty\}$ and $q\neq{}q'$, then we have
\begin{align*}
(\overline{\mathcal{O}(H_q)},\overline{\mathcal{O}(V)})_{\mathrm{fin}} &= 0, \\[2mm]
(\overline{\mathcal{O}(V_q)},\overline{\mathcal{O}(V)})_{\mathrm{fin}} &= -\frac{2(g_{\Gamma}-1)}{r_{\mathfrak{p}}}\log(\#k(\mathfrak{p})).
\end{align*}
This yield the first assertion of the lemma.
\vskip 2mm

Next, to prove the second part of the lemma, observe that 
\begin{align*}
(V_{0},V_{\infty})_{\mathrm{fin}} 
&= \sum_{\mathfrak{p}|N} \sum_{\mathfrak{p}'|N}\frac{4(g_{\Gamma}-1)^2}{r_{\mathfrak{p}}r_{\mathfrak{p}'}s_{\mathfrak{p}}s_{\mathfrak{p}'}} (C_{0,\mathfrak{p}},C_{\infty,\mathfrak{p}'})_{\mathrm{fin}}.
\end{align*}

Since we have $(C_{0,\mathfrak{p}},C_{\infty,\mathfrak{p}'})_{\mathrm{fin}} = 0$ provided that $\mathfrak{p} \neq \mathfrak{p}'$, and
\begin{align*}
(C_{q,\mathfrak{p}},C_{q',\mathfrak{p}})_{\mathrm{fin}} = \begin{cases} s_{\mathfrak{p}}\log(\#k(\mathfrak{p})), & q\neq{}q'; \\[2mm] -(r_{\mathfrak{p}}-1)s_{\mathfrak{p}}\log(\#k(\mathfrak{p})), & q=q'; \end{cases}
\end{align*}
where $q'\in\{0,\infty\}$ with $q\neq{}q'$, we get
\begin{align*}
(V_{0},V_{\infty})_{\mathrm{fin}} 
&= \sum_{\mathfrak{p}|N}\frac{4(g_{\Gamma}-1)^2}{r_{\mathfrak{p}}^2s_{\mathfrak{p}}}\log(\#k(\mathfrak{p})).
\end{align*}
Further, if $\mathfrak{p}|p$ for some prime number $p|N$, then we have $$s_{\mathfrak{p}} = \frac{N}{24p(p+1)}\prod_{p'|N}((p')^2-1);$$ therefore, we obtain
\begin{align*}
(V_{0},V_{\infty})_{\mathrm{fin}} 
&= \frac{4(g_{\Gamma}-1)}{N}\frac{24(g_{\Gamma}-1)}{\prod_{p'|N}((p')^2-1)}\sum_{p|N}\bigg(\frac{p}{p+1}\sum_{\mathfrak{p}|p}\log(\#k(\mathfrak{p}))\bigg)\\
&= 4(g_{\Gamma}-1)\varphi(N)\bigg(1-\frac{6}{N}\bigg) \sum_{p|N}\frac{p\log(p)}{p^2-1}.
\end{align*}
The other identities can be proved in a similar way. This concludes the proof.
\end{proof}

\begin{proposition}
\label{prop:self-intersection explicit formula}
Let $N\geq 3$ be a composite, odd, and square-free integer. Then, the following identity holds
\begin{align*}
	\overline{\omega}^2 = \mathcal{G}(N) + \mathcal{A}(N), 
\end{align*}
where 
\begin{align*}
	\mathcal{G}(N) &= \frac{2g_{\Gamma}(V_0,V_{\infty})_{\mathrm{fin}} - (V_0,V_0)_{\mathrm{fin}} - (V_{\infty},V_{\infty})_{\mathrm{fin}}}{2(g_{\Gamma}-1)}, \\
	\mathcal{A}(N) &= 4g_{\Gamma}(g_{\Gamma}-1)\sum_{\sigma:\mathbbm{Q}(\zeta_N) \hookrightarrow \mathbbm{C}} g_{\mathrm{Ar}}(0^{\sigma},\infty^{\sigma}). 
\end{align*} 
Here, the sum runs over all embeddings of $\mathbbm{Q}(\zeta_N)$ into $\mathbbm{C}$, and $0^{\sigma}$ resp.~$\infty^{\sigma}$ denote the image of $0,\infty$ in $\mathcal{X}_{\eta}(\mathbbm{Q}(\zeta_N))_{\sigma}$ under the embedding  $\sigma$, respectively.
\end{proposition}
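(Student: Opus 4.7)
The plan is to follow the Abbes--Ullmo approach for $X_{0}(N)$, adapted to $X(N)$. The proof hinges on two key inputs: the Manin--Drinfeld theorem (asserting that differences of cuspidal divisor classes are torsion in $\mathrm{Jac}(X(N))$) and the Faltings--Hriljac theorem (relating the arithmetic self-intersection of an admissible arithmetic divisor to a N\'eron--Tate height on the Jacobian), combined with the admissibility statements in Lemma \ref{lem:self intersection vertical divisors}.

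First I would observe that $\overline{\mathcal{M}}$ and both $\overline{\mathcal{L}}_{q}$ (for $q\in\{0,\infty\}$) are admissible arithmetic divisors: their Arakelov intersection with every vertical divisor vanishes by Lemma \ref{lem:self intersection vertical divisors}. The generic fibers are $[\infty]-[0]$ and $[K-(2g_{\Gamma}-2)\,q]$, respectively. By Manin--Drinfeld the first is torsion in $\mathrm{Jac}(X(N))$, and the second is torsion as well once one argues (again from Manin--Drinfeld together with the cuspidal--Eisenstein decomposition of $\mathrm{Pic}(X(N))$) that the canonical class lies in the $\mathbb{Q}$-span of cuspidal classes modulo torsion. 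The Faltings--Hriljac theorem then yields
\begin{align*}
(\overline{\mathcal{L}}_{0})^{2} \;=\; (\overline{\mathcal{L}}_{\infty})^{2} \;=\; \overline{\mathcal{M}}^{2} \;=\; 0.
\end{align*}

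Next I would expand $(\overline{\mathcal{L}}_{q})^{2}=0$ by substituting the defining relation $\overline{\mathcal{L}}_{q}=\overline{\omega}-(2g_{\Gamma}-2)\overline{\mathcal{O}(H_{q})}+\overline{\mathcal{O}(V_{q})}$, using the arithmetic adjunction identity $(\overline{\omega},\overline{\mathcal{O}(H_{q})})_{\mathrm{Ar}} = -\overline{\mathcal{O}(H_{q})}^{2}$ (valid because $H_{q}$ is a section), and applying the first admissibility identity of Lemma \ref{lem:self intersection vertical divisors} to eliminate $(\overline{\omega},\overline{\mathcal{O}(V_{q})})$. After the horizontal--vertical cross-terms cancel, one arrives at the clean relation
\begin{align*}
\overline{\omega}^{2} \;=\; (V_{q},V_{q})_{\mathrm{fin}} \,-\, 2g_{\Gamma}(2g_{\Gamma}-2)\,\overline{\mathcal{O}(H_{q})}^{2} \qquad (q\in\{0,\infty\}),
\end{align*}
whose average over $q\in\{0,\infty\}$ expresses $\overline{\omega}^{2}$ symmetrically in terms of $\overline{\mathcal{O}(H_{0})}^{2}+\overline{\mathcal{O}(H_{\infty})}^{2}$ and the two vertical self-intersections.

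To eliminate the unknown cuspidal self-intersections I would then expand the third vanishing $\overline{\mathcal{M}}^{2}=0$. Using $\mathcal{M}=H_{\infty}-H_{0}+\frac{1}{2g_{\Gamma}-2}(V_{0}-V_{\infty})$ and computing the horizontal--vertical cross-terms from the adjacencies $H_{q}\cap C_{q',\mathfrak{p}}=\delta_{qq'}\{\text{one point}\}$ (so that, for example, $(H_{0},V_{0})_{\mathrm{fin}}$ has the value read off Lemma \ref{lem:self intersection vertical divisors} whereas $(H_{0},V_{\infty})_{\mathrm{fin}}=0$), one solves for $\overline{\mathcal{O}(H_{0})}^{2}+\overline{\mathcal{O}(H_{\infty})}^{2}$ as $2(\overline{\mathcal{O}(H_{0})},\overline{\mathcal{O}(H_{\infty})})_{\mathrm{Ar}}$ plus purely vertical correction terms. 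Substituting back eliminates all $\overline{\mathcal{O}(H_{q})}^{2}$ and leaves a closed expression for $\overline{\omega}^{2}$ involving only $(\overline{\mathcal{O}(H_{0})},\overline{\mathcal{O}(H_{\infty})})_{\mathrm{Ar}}$ and the finite numbers $(V_{i},V_{j})_{\mathrm{fin}}$. Since $0$ and $\infty$ are disjoint sections meeting distinct components at each bad fiber, $(H_{0},H_{\infty})_{\mathrm{fin}}=0$, so
\begin{align*}
(\overline{\mathcal{O}(H_{0})},\overline{\mathcal{O}(H_{\infty})})_{\mathrm{Ar}} \;=\; \sum_{\sigma\colon\mathbb{Q}(\zeta_{N})\hookrightarrow\mathbb{C}} g_{\mathrm{Ar}}(0^{\sigma},\infty^{\sigma}).
\end{align*}
The resulting coefficient $2g_{\Gamma}(2g_{\Gamma}-2)=4g_{\Gamma}(g_{\Gamma}-1)$ delivers $\mathcal{A}(N)$, while the residual finite contributions reassemble, via the explicit values in Lemma \ref{lem:self intersection vertical divisors}, into $\mathcal{G}(N)$.

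The principal obstacle is the torsion statement for $[K-(2g_{\Gamma}-2)\,q]$. For $X_{0}(N)$ this is essentially a two-cusp calculation, but for $X(N)$ one must invoke the full cuspidal--Eisenstein decomposition of $\mathrm{Pic}(X(N))$ together with Manin--Drinfeld to argue that $[K]$ lies in the $\mathbb{Q}$-span of cuspidal classes modulo torsion. A secondary technicality is the careful accounting of the horizontal--vertical cross-terms in the expansion of $\overline{\mathcal{M}}^{2}=0$, but this is routine once the explicit intersection values of Lemma \ref{lem:self intersection vertical divisors} are in hand.
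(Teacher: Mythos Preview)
Your approach is essentially identical to the paper's: both use Faltings--Hriljac together with Manin--Drinfeld to obtain $\overline{\mathcal{L}}_q^2=\overline{\mathcal{M}}^2=0$, expand $\overline{\mathcal{L}}_q^2=0$ via arithmetic adjunction and Lemma~\ref{lem:self intersection vertical divisors} to reach $\overline{\omega}^2=-4g_\Gamma(g_\Gamma-1)\overline{\mathcal{O}(H_q)}^2+\overline{\mathcal{O}(V_q)}^2$, average over $q\in\{0,\infty\}$, and then use $\overline{\mathcal{M}}^2=0$ to replace $\overline{\mathcal{O}(H_0)}^2+\overline{\mathcal{O}(H_\infty)}^2$ by $2(\overline{\mathcal{O}(H_0)},\overline{\mathcal{O}(H_\infty)})_{\mathrm{Ar}}$ plus purely vertical terms.

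Two small corrections. First, a sign: in the paper's convention $(\overline{\mathcal{O}(H_\infty)},\overline{\mathcal{O}(H_0)})_{\mathrm{Ar}}=-\sum_\sigma g_{\mathrm{Ar}}(0^\sigma,\infty^\sigma)$, not $+$; it is this minus sign, multiplied by the $-4g_\Gamma(g_\Gamma-1)$ from the expansion, that produces the positive coefficient in $\mathcal{A}(N)$. Second, the two steps you flag as delicate are in fact simpler than you suggest. For the cross-terms in $\overline{\mathcal{M}}^2$, the paper writes $\mathcal{M}=D+E$ with $D=H_\infty-H_0$ and $E$ vertical, and uses $(\mathcal{M},E)_{\mathrm{Ar}}=0$ from Lemma~\ref{lem:self intersection vertical divisors} to conclude $D^2=E^2$ in one line, with no explicit cross-term bookkeeping. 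For the torsion of $[K-(2g_\Gamma-2)q]$, no Eisenstein decomposition of $\mathrm{Pic}$ is needed: since $\Gamma(N)$ has no elliptic elements, Kodaira--Spencer gives $\omega_{X(N)}\cong\underline{\omega}^{\otimes 2}(-\mathrm{cusps})$, and the discriminant $\Delta$ shows $\underline{\omega}^{\otimes 12}$ is linearly equivalent to a cuspidal divisor; hence a fixed multiple of $K_{X(N)}$ is cuspidal and Manin--Drinfeld applies directly. The paper simply cites \cite[Lemme~4.1.1]{AbbesUllmo} for this.
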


\begin{proof}
Throughout the proof, we will write $\mathcal{E}^2\coloneqq (\mathcal{E},\mathcal{E})_{\mathrm{Ar}}$. Let $\mathcal{L}_q \coloneqq \omega \otimes \mathcal{O}(H_q)^{\otimes-(2g_{\Gamma}-2)}\otimes \mathcal{O}(V_q)$.
\vskip 2mm

First of all, note that the pullback $L_q \coloneqq \mathcal{L}_q\otimes_{\mathbbm{Z}[\zeta_N]}\mathbbm{Q}(\zeta_N)$ of $\mathcal{L}_q$ to the generic fiber $\mathcal{X}_{\eta}$ defines a line bundle on $X(N)/\mathbbm{Q}(\zeta_N)$, that is supported on the cusps (see \cite[Lemme 4.1.1]{AbbesUllmo}). By the theorem of Manin--Drinfeld, $L_q$ defines a torsion point of $\mathrm{Jac}(X(N))(\mathbbm{Q}(\zeta_N))$. Furthermore, the theorem of Faltings--Hriljac implies 
\begin{align*}
\overline{\mathcal{L}}_q^2 
&= -2\varphi(N)h_{\mathrm{NT}}(L_q),
\end{align*}

where $h_{\mathrm{NT}}(\cdot)$ denotes the N\'e{}ron--Tate height. Since the latter vanishes on torsion points of the Jacobian, we obtain that 
\begin{align*}
\overline{\mathcal{L}}_q^2=0.
\end{align*}
Similarly, the restriction of the divisor $\mathcal{M}$ to the generic fiber $\mathcal{X}_{\eta}$ is supported on the cusps of $X(N)/\mathbbm{Q}(\zeta_N)$; therefore, by using the same arguments of the previous paragraph, we obtain $\mathcal{M}^2=0$.
\vskip 2mm

Secondly, if we expand the left hand side of $\overline{\mathcal{L}}_q^2=0$ using the definition of $\overline{\mathcal{L}}_q$, and apply the identities \eqref{id:fundamental identities geometric part}, then we obtain 
\begin{align*}
\overline{\omega}^2 = 2(2g_{\Gamma}-2)(\overline{\omega},\overline{\mathcal{O}(H_q)})_{\mathrm{Ar}} - (2g_{\Gamma}-2)^2\overline{\mathcal{O}(H_q)}^2 - (\overline{\omega}\otimes\overline{\mathcal{O}(H_q)}^{\otimes-(2g_{\Gamma}-2)},\overline{\mathcal{O}(V_q)})_{\mathrm{Ar}},
\end{align*}
where $q\in\{0,\infty\}$. By virtue of the equalities
\begin{align*}
(\overline{\omega} \otimes \overline{\mathcal{O}(H_q)}^{\otimes-(2g_{\Gamma}-2)},\overline{\mathcal{O}(V_q)})_{\mathrm{Ar}} &= -\overline{\mathcal{O}(V_q)}^2, \\[2mm]
(\overline{\omega}\otimes\overline{\mathcal{O}(H_q)},\overline{\mathcal{O}(H_q)})_{\mathrm{Ar}} &= 0,
\end{align*}
we now have
\begin{align*}
\overline{\omega}^2 = -4g_{\Gamma}(g_{\Gamma}-1)\overline{\mathcal{O}(H_q)}^2 + \overline{\mathcal{O}(V_q)}^2,
\end{align*}
for each $q\in\{0,\infty\}$. Thus, by adding the resulting identities for $q=0$ and $q=\infty$, we get
\begin{align*}
\overline{\omega}^2 = -2g_{\Gamma}(g_{\Gamma}-1)(\overline{\mathcal{O}(H_0)}^2 + \overline{\mathcal{O}(H_{\infty})}^2) + \frac{1}{2}(\overline{\mathcal{O}(V_0)}^2 + \overline{\mathcal{O}(V_{\infty})}^2).
\end{align*}
\vskip 2mm

Thirdly, putting $D \coloneqq H_{\infty} - H_{0}$ and $E \coloneqq (1/(2g_{\Gamma}-2))(V_0-V_{\infty})$, we have $$D^2 = \mathcal{M}^2 - 2(\mathcal{M},E)_{\mathrm{Ar}} + E^2 = E^2,$$ which in turn implies 
\begin{align*}
\overline{\mathcal{O}(H_{\infty})}^2 + \overline{\mathcal{O}(H_{0})}^2 =& \frac{1}{(2g_{\Gamma}-2)^2}\bigg(\overline{\mathcal{O}(V_{0})}^2 -2(\overline{\mathcal{O}(V_0)},\overline{\mathcal{O}(V_{\infty})})_{\mathrm{Ar}} + \overline{\mathcal{O}(V_{\infty})}^2 \bigg) + 2(\overline{\mathcal{O}(H_{\infty})},\overline{\mathcal{O}(H_{0})})_{\mathrm{Ar}}.
\end{align*}
Therefore, we have
\begin{align*}
\overline{\omega}^2 = -4g_{\Gamma}(g_{\Gamma}-1)(\overline{\mathcal{O}(H_{\infty})},\overline{\mathcal{O}(H_{0})})_{\mathrm{Ar}} + \frac{2g_{\Gamma}(V_0,V_{\infty})_{\mathrm{fin}} - (V_0,V_0)_{\mathrm{fin}} - (V_{\infty},V_{\infty})_{\mathrm{fin}}}{2(g_{\Gamma}-1)}.
\end{align*}

Finally, since 
\begin{align*}
(\overline{\mathcal{O}(H_{\infty})},\overline{\mathcal{O}(H_{0})})_{\mathrm{Ar}} = - \sum_{\sigma:\mathbbm{Q}(\zeta_N)\hookrightarrow\mathbbm{C}} g_{\mathrm{Ar}}(0^{\sigma},\infty^{\sigma}),
\end{align*}
the result follows. This concludes the proof of the proposition.
\end{proof}

\subsection{Asymptotics for the self-intersection}\label{section6.2}
%\begin{proposition}
%Let $N\geq 3$ be a composite, odd and square-free integer such that $g_{\Gamma} \geq 2$. Then, the following identity holds
%\begin{align*}
%	g_{\mathrm{Ar}}(0_{\xi},\infty) = -2\pi\mathscr{C}_{0_{\xi}\infty} - \frac{2\pi}{v_{\Gamma}} + 4\pi\mathscr{R}_{\infty} + 2\pi\mathscr{G}
%\end{align*}
%with 
%\begin{align*}
%	\mathscr{C}_{0_{\xi}\infty} &= 2v_{\Gamma}^{-1}\bigg( \mathscr{C} - \sum_{p|N}\frac{p^2-p-1}{p^2-1}\log(p)\bigg) + \kappa(\tilde{\xi};1,N) \\
%	\mathscr{R}_{\infty} &= -\frac{v_{\Gamma}^{-1}}{2g_{\Gamma}}\lim_{s\rightarrow{}1}\bigg(\frac{Z'}{Z}(s) - \frac{1}{s-1}\bigg) + \frac{1-\log(4\pi)}{4\pi g_{\Gamma}} + \frac{\mathscr{C}_{\infty\infty}}{g_{\Gamma}} \\ 
%	& \hspace{1em} + \frac{12}{N\pi{}g_{\Gamma}}\bigg( C_1 - \mathscr{C} - \frac{\gamma_{\mathrm{EM}}}{2} + \log(N) \bigg)\\
%	\mathscr{C}_{\infty\infty} &= 2v_{\Gamma}^{-1}\bigg( \mathscr{C} - \log(N) -\sum_{p|N}\frac{p^2}{p^2-1}\log(p)\bigg)
%\end{align*}
%and $\mathscr{G}$ is a constant such that $\mathscr{G} = O(1/g_{\Gamma})$ as $N\rightarrow\infty$.
%\end{proposition}

\begin{proposition}
\label{asymptotics:geometric part}
Let $N\geq 3$ be a composite, odd, and square-free integer. Then, the following asymptotics holds
\begin{align*}
	\frac{1}{\varphi(N)}\mathcal{G}(N) = o(g_{\Gamma}\log(N)),
\end{align*}
as $N\to\infty$.
\end{proposition}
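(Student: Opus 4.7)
The plan is to combine the formula of Proposition \ref{prop:self-intersection explicit formula} with the explicit intersection numbers supplied by Lemma \ref{lem:self intersection vertical divisors}. A direct substitution and the cancellation of the factor $(g_\Gamma - 1)$ produce the clean identity
\begin{align*}
\frac{\mathcal{G}(N)}{\varphi(N)} = 4\left(1 - \frac{6}{N}\right)\left(g_\Gamma \sum_{p \mid N} \frac{p \log p}{p^2 - 1} + \sum_{p \mid N} \frac{p^2 \log p}{p^2 - 1}\right),
\end{align*}
so the problem reduces to estimating the two arithmetic sums on the right.

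The second sum is easy: using $p^2/(p^2 - 1) \leq 2$ for $p \geq 2$ gives $\sum_{p \mid N} p^2 \log p/(p^2 - 1) \leq 2 \log N$, and since $g_\Gamma$ grows polynomially in $N$, this contribution is already $o(g_\Gamma \log N)$. The first sum, however, is multiplied by $g_\Gamma$, so to obtain $o(g_\Gamma \log N)$ we need to show $\sum_{p \mid N} p \log p/(p^2 - 1) = o(\log N)$. Using $p/(p^2 - 1) \leq 2/p$ this reduces to the sharper statement
\begin{align*}
\sum_{p \mid N} \frac{\log p}{p} = o(\log N),
\end{align*}
which is the main technical obstacle, since a naive bound by $\log P(N)+O(1)$ (where $P(N)$ denotes the largest prime factor of $N$) only yields $O(\log N)$.

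To handle this, I would exploit that $N$ is odd and squarefree, combined with Mertens's theorem and the prime number theorem. Writing $3 \leq p_1 < \cdots < p_k$ for the distinct prime divisors of $N$ and $q_j$ for the $j$-th smallest odd prime, we have $p_j \geq q_j$. Since $x \mapsto \log x / x$ is decreasing on $[3, \infty)$, it follows that $\log p_j / p_j \leq \log q_j / q_j$, so Mertens's theorem yields
\begin{align*}
\sum_{p \mid N} \frac{\log p}{p} \leq \sum_{j=1}^k \frac{\log q_j}{q_j} = \log q_k + O(1).
\end{align*}
On the other hand, $\log N \geq \sum_{j=1}^{k} \log q_j = \theta(q_k) - \log 2$, and the prime number theorem $\theta(x) \sim x$ forces $q_k \leq (1+o(1)) \log N$, whence $\log q_k \leq \log \log N + O(1)$ (the case of bounded $k$ being trivial). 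Therefore $\sum_{p \mid N} \log p / p = O(\log \log N)$, and assembling the estimates gives
\begin{align*}
\frac{\mathcal{G}(N)}{\varphi(N)} = O\bigl(g_\Gamma \log \log N\bigr) + O(\log N) = o\bigl(g_\Gamma \log N\bigr),
\end{align*}
as $N \to \infty$, which is the asserted asymptotics.
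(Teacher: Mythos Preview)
Your argument is correct and follows essentially the same route as the paper: both substitute the intersection numbers from Lemma~\ref{lem:self intersection vertical divisors} into the formula for $\mathcal{G}(N)$, and then reduce to the estimate $\sum_{p\mid N}\log(p)/p = O(\log\log N)$. The only difference is cosmetic---the paper rewrites $\sum_{p\mid N} p^2\log(p)/(p^2-1)$ as $\log(N)+\sum_{p\mid N}\log(p)/(p^2-1)$ and cites \cite{Bruijn-vanLint} for the key $O(\log\log N)$ bound, whereas you supply a self-contained proof of that bound via Mertens' theorem and the prime number theorem.
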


\begin{proof}
Indeed, using Lemma \ref{lem:self intersection vertical divisors} and Proposition \ref{prop:self-intersection explicit formula}, we have 
\begin{align*}
\frac{1}{\varphi(N)}\mathcal{G}(N) = 4\bigg(1-\frac{6}{N}\bigg)\bigg(\log(N) + g_{\Gamma}\sum_{p|N}\frac{p\log(p)}{p^2-1} + \sum_{p|N}\frac{\log(p)}{p^2-1}\bigg).
\end{align*}

Using the fact that $$\sum_{p|N}\frac{\log(p)}{p} = O(\log\log(N)),$$ as $N\to\infty$ (see, e.g., \cite{Bruijn-vanLint}), we can deduce the following asymptotics
\begin{align*}
\sum_{p|N}\frac{p\log(p)}{p^2-1} = O(\log\log(N)), \quad \sum_{p|N}\frac{\log(p)}{p^2-1} = O(\log\log(N)),
\end{align*}
as $N\rightarrow\infty$. This yields
\begin{align*}
\frac{1}{\varphi(N)}\frac{\mathcal{G}(N)}{g_{\Gamma}\log(N)} = \frac{4}{g_{\Gamma}\log(N)}\bigg(1-\frac{6}{N}\bigg)\bigg(\log(N)+O(g_{\Gamma}\log\log(N))\bigg),
\end{align*}
and the result follows, since the right hand side of the previous identity converges to zero as $N\to\infty$. This concludes the proof.
\end{proof}

\begin{lemma}
\label{lem:reduction step}
Let $N\geq 3$ be a composite, odd, and square-free integer. Then, the following identity holds $$\mathscr{R}_{0_{\xi}} = \mathscr{R}_{\infty},$$ for all $\xi \in U$.
\end{lemma}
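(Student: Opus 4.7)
The plan is to produce, for each $\xi\in U$, an element $\alpha_{\xi}\in\mathrm{SL}_2(\mathbbm{Z})$ which (i) normalizes $\Gamma=\overline{\Gamma}(N)$, and (ii) sends the cusp $\infty$ to the cusp $0_{\xi}$. Once such an $\alpha_{\xi}$ is in hand, the invariance $F(\alpha_{\xi}z)=F(z)$ noted in Section \ref{subsect:Arakelov metric} combined with a change of variables in the Rankin--Selberg integral will immediately yield $\mathcal{R}_{0_{\xi}}[F](s)=\mathcal{R}_{\infty}[F](s)$, and the identity $\mathscr{R}_{0_{\xi}}=\mathscr{R}_{\infty}$ follows upon subtracting the common pole and letting $s\to 1$ in \eqref{dfn:RankinSelberg constant}.

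By the very description given before Proposition \ref{prop:iota iso}, the cusp $0_{\xi}$ equals $\alpha_{\xi}\cdot\infty$ with
\[
\alpha_{\xi}\coloneqq n(N-1)\,p(\xi)\,\gamma_{N}\in\mathrm{SL}_2(\mathbbm{Z}).
\]
Thus (ii) holds by construction. For (i), one checks factor by factor that each of $n(N-1)$, $p(\xi)$ and $\gamma_{N}$ normalizes $\Gamma(N)$: a direct matrix computation shows $\gamma_{N}\begin{psmallmatrix}a&b\\c&d\end{psmallmatrix}\gamma_{N}^{-1}=\begin{psmallmatrix}d&-c\\-b&a\end{psmallmatrix}$, which preserves the congruence conditions defining $\Gamma(N)$; a similar unipotent conjugation handles $n(N-1)$; and because $p(\xi)\in\mathrm{SL}_2(\mathbbm{Z})$, writing an element of $\Gamma(N)$ as $I+NM$ with $M$ integral shows $p(\xi)(I+NM)p(\xi)^{-1}=I+N\bigl(p(\xi)Mp(\xi)^{-1}\bigr)\in\Gamma(N)$. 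Consequently $\alpha_{\xi}^{-1}\Gamma\alpha_{\xi}=\Gamma$.

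Next, the element $\alpha_{\xi}\sigma_{\infty}$ sends $\infty$ to $0_{\xi}$ and satisfies $(\alpha_{\xi}\sigma_{\infty})^{-1}\Gamma_{0_{\xi}}(\alpha_{\xi}\sigma_{\infty})=\sigma_{\infty}^{-1}\Gamma_{\infty}\sigma_{\infty}=\{\pm n(k):k\in\mathbbm{Z}\}$, so $\alpha_{\xi}\sigma_{\infty}$ differs from the chosen scaling matrix $\sigma_{0_{\xi}}$ only by a sign and a right translation $n(k)$. Since neither affects the imaginary part, substituting $w=\alpha_{\xi}^{-1}z$ in the definition of $E_{\infty}(\,\cdot\,,s)$ and reindexing the $\Gamma_{\infty}\backslash\Gamma$-sum by $\gamma\mapsto\alpha_{\xi}\gamma\alpha_{\xi}^{-1}\in\Gamma_{0_{\xi}}\backslash\Gamma$ gives the key identity
\[
E_{\infty}(\alpha_{\xi}^{-1}z,s)=E_{0_{\xi}}(z,s).
\]

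Finally, since $\alpha_{\xi}^{-1}\mathcal{F}_{\Gamma}$ is again a fundamental domain for $\Gamma$ (because $\alpha_{\xi}$ normalizes $\Gamma$), the change of variables $w=\alpha_{\xi}^{-1}z$ combined with $F(\alpha_{\xi}w)=F(w)$ and the $\mathrm{SL}_2(\mathbbm{R})$-invariance of $\mu_{\mathrm{hyp}}$ yields
\[
\mathcal{R}_{0_{\xi}}[F](s)=\int_{\mathcal{F}_{\Gamma}}F(z)E_{0_{\xi}}(z,s)\mu_{\mathrm{hyp}}(z)
=\int_{\alpha_{\xi}^{-1}\mathcal{F}_{\Gamma}}F(w)E_{\infty}(w,s)\mu_{\mathrm{hyp}}(w)=\mathcal{R}_{\infty}[F](s).
\]
Passing to the limit in \eqref{dfn:RankinSelberg constant} gives $\mathscr{R}_{0_{\xi}}=\mathscr{R}_{\infty}$. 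The only potentially tricky step is the verification of the Eisenstein series identity, since one must track the precise relation between the scaling matrices $\sigma_{0_{\xi}}$ and $\alpha_{\xi}\sigma_{\infty}$; however, as shown above, they differ by an element of $\{\pm n(k)\}$, which is harmless for the imaginary-part summand, so no further delicate analysis is needed.
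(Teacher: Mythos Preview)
Your proof is correct and follows essentially the same approach as the paper: both produce an element $\alpha\in\mathrm{SL}_2(\mathbbm{Z})$ normalizing $\Gamma(N)$ with $\alpha\cdot\infty$ (resp.\ $\alpha^{-1}\cdot\infty$) equal to $0_\xi$, invoke the invariance $F(\alpha z)=F(z)$ from Section~\ref{subsect:Arakelov metric}, deduce the Eisenstein series identity $E_{\infty}(\alpha^{\mp 1}z,s)=E_{0_\xi}(z,s)$, and conclude $\mathcal{R}_{0_\xi}[F](s)=\mathcal{R}_{\infty}[F](s)$. The only cosmetic difference is that the paper writes down an explicit matrix $\alpha=\begin{psmallmatrix}-mN & \xi\\ -\tilde{\xi} & 1\end{psmallmatrix}$, whereas you take $\alpha_\xi=n(N-1)p(\xi)\gamma_N$ straight from the definition of $0_\xi$; your normalization check could in fact be shortened to the single remark that $\Gamma(N)\trianglelefteq\mathrm{SL}_2(\mathbbm{Z})$.
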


\begin{proof}
By taking $\alpha = \begin{psmallmatrix} -mN & \xi \\ -\tilde{\xi} & 1 \end{psmallmatrix}$ with $\mathrm{det}(\alpha)=1$, one can verify that $0_{\xi} = \alpha^{-1}\infty$ and $\alpha^{-1}\Gamma\alpha = \Gamma$; therefore, from Section \ref{subsect:Arakelov metric}, we have that $F(\alpha z) = F(z)$ with $z\in\mathbbm{H}$. Thus, since the identity $E_{\infty}(\alpha z,s) = E_{0_{\xi}}(z,s)$ holds, we obtain $\mathcal{R}_{0_{\xi}}[F](s) = \mathcal{R}_{\infty}[F](s)$. Consequently, by comparing coefficients in the Laurent expansions of $\mathcal{R}_{0_{\xi}}[F](s)$ and $\mathcal{R}_{\infty}[F](s)$ at $s=1$, we get $\mathscr{R}_{0_{\xi}} = \mathscr{R}_{\infty}$. This concludes the proof.
%
%Taking $\alpha=\begin{psmallmatrix} 0&1\\-1&0 \end{psmallmatrix}$, we have $0=\alpha^{-1}\infty$ and $\alpha^{-1}\Gamma\alpha = \Gamma$. Further, since $E_{\infty}(\alpha z,s) = E_0(z,s)$ holds, we obtain $\mathcal{R}_{0}[F](s) = \mathcal{R}_{\infty}[F](s)$; in particular, $\mathscr{R}_{0} = \mathscr{R}_{\infty}$.
%
%\begin{enumerate}[(i)]
	%\item Step 1: Note that $F(\alpha z) = F(z)$ for $z\in\mathbbm{H}$ and $\alpha\in\mathrm{GL}_2^+(\mathbbm{Q})$ such that $\alpha^{-1}\overline{\Gamma}(N)\alpha = \overline{\Gamma}(N)$.
%	\item Step 2: Taking $\alpha=\begin{psmallmatrix} 0&1\\-1&0 \end{psmallmatrix}$, we have $0=\alpha^{-1}\infty$ and $\alpha^{-1}\overline{\Gamma}(N)\alpha = \overline{\Gamma}(N)$. Since $E_{\infty}(\alpha z,s) = E_0(z,s)$, we obtain $\mathcal{R}_{0}[F](s) = \mathcal{R}_{\infty}[F](s)$; therefore, $\mathscr{R}_{0} = \mathscr{R}_{\infty}$.
%	\item Step 3: Similarly, taking $\alpha = \begin{psmallmatrix} -mN & \xi \\ -\tilde{\xi} & 1 \end{psmallmatrix}$ with $\mathrm{det}(\alpha)=1$, then $0_{\xi} = \alpha^{-1}\infty$ and $\alpha^{-1}\overline{\Gamma}(N)\alpha = \overline{\Gamma}(N)$. Consequently, $\mathscr{R}_{0_{\xi}} = \mathscr{R}_{\infty}$.
%\end{enumerate}
\end{proof}

\begin{proposition}
\label{lem:R_infty}
Let $N\geq 3$ be a composite, odd, and square-free integer. Then, the following identity holds
\begin{align*}
\mathscr{R}_{\infty} &= \frac{1}{g_{\Gamma}} \bigg( \frac{v_{\Gamma}^{-1}}{2}\lim_{s\rightarrow 1}\bigg(\frac{Z_{\Gamma}'}{Z_{\Gamma}}(s) - \frac{1}{s-1}\bigg) + \frac{12}{\pi N}\bigg(C_1 - \mathscr{C} - \frac{\gamma_{\!_{\mathrm{EM}}}}{2} + \log(N) \bigg) \\[2mm]
&+ v_{\Gamma}^{-1}\bigg( \mathscr{C}+1-2\log(N) - \sum_{p|N}\frac{\log(p)}{p^2-1}\bigg) + \frac{1-\log(4\pi)}{4\pi} \bigg),
\end{align*}
where $ C_1 = \lim_{T\rightarrow \infty}C_1(T)$. 
\end{proposition}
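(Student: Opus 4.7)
The plan is to substitute the explicit evaluations of $\mathscr{R}_{\infty}^{\mathrm{hyp}}$ from Proposition \ref{prop:hyperbolic contribution} and $\mathscr{R}_{\infty}^{\mathrm{par}}$ from Proposition \ref{prop:parabolic contribution} into the fundamental identity \eqref{eqn:automorphic interpretation}, and then let the auxiliary parameter $T$ tend to infinity. Since $\mathscr{R}_{\infty}$ does not depend on $T$ while each of the three pieces on the right-hand side does, the $T$-dependence must cancel after the substitution; this provides the organizing principle for the bookkeeping.

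First I would write
$$\mathscr{R}_{\infty} \;=\; \frac{1}{g_{\Gamma}}\bigg(\,-\frac{v_{\Gamma}^{-1}}{2}\sum_{j=1}^{\infty}\frac{h_T(r_j)}{\lambda_j} + \mathscr{R}_{\infty}^{\mathrm{hyp}} + \mathscr{R}_{\infty}^{\mathrm{par}}\bigg)$$
and plug in the formula $\mathscr{R}_{\infty}^{\mathrm{hyp}} = \tfrac{v_{\Gamma}^{-1}}{2}(\lim_{s\to 1}(\tfrac{Z_{\Gamma}'}{Z_{\Gamma}}(s)-\tfrac{1}{s-1}) - T + 1 + o(1))$ along with the full expression for $\mathscr{R}_{\infty}^{\mathrm{par}}$. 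The linear-in-$T$ pieces coming from the hyperbolic and parabolic contributions combine as $\tfrac{v_{\Gamma}^{-1}}{2}(-T+1) + \tfrac{v_{\Gamma}^{-1}}{2}(T+1) = v_{\Gamma}^{-1}$, cancelling the linear divergence. For the remaining $T$-dependent pieces I would argue as follows. Dominated convergence applied to $A_2(T) = -\tfrac{1}{2} + \tfrac{1}{4\pi}\int_{-\infty}^{\infty}\tfrac{h_T(r)}{\tfrac14+r^2}\mathrm{d}r$ shows $A_2(T)\to -\tfrac{1}{2}$, which converts $\tfrac{12}{\pi N}A_2(T)(2\mathscr{C}+\gamma_{\mathrm{EM}}-2\log(N))$ into the desired $\tfrac{12}{\pi N}(-\mathscr{C}-\tfrac{\gamma_{\mathrm{EM}}}{2}+\log(N))$. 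The discrete spectral sum $\sum_{j\geq 1}\tfrac{h_T(r_j)}{\lambda_j} = \sum_{j\geq 1}\tfrac{e^{-T\lambda_j}}{\lambda_j}$ tends to $0$ by Weyl's law combined with $\lambda_1>0$, and $C_1(T)\to C_1$ by hypothesis.

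The main obstacle is showing that the bundle of remaining auxiliary terms, namely
$$\gamma_{\mathrm{EM}}\,C_2(T) + C_3(T) + C_4(T) + \tfrac{\mathscr{C}_{\infty\infty}}{2} + \tfrac{1-\log(4\pi)}{4\pi},$$
reassembles in the limit into $v_{\Gamma}^{-1}\bigl(\mathscr{C}-2\log(N)-\sum_{p\mid N}\tfrac{\log(p)}{p^2-1}\bigr) + \tfrac{1-\log(4\pi)}{4\pi}$. For this I would invoke the explicit evaluation of the scattering constant $\mathscr{C}_{\infty\infty}$ for $\overline{\Gamma}(N)$ provided in \cite{GradosPippich}, which pins down $\mathscr{C}_{\infty\infty}$ in terms of $v_{\Gamma}$, $\log(N)$, and the arithmetic sum $\sum_{p\mid N}\log(p)/(p^2-1)$, together with the $T\to\infty$ evaluations of $C_2(T), C_3(T), C_4(T)$ extracted from the proofs in \cite[\S3]{AbbesUllmo} and \cite[\S5]{Mayer} (which, being group-theoretic, carry over without change to $\overline{\Gamma}(N)$). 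Adding the surviving $v_{\Gamma}^{-1}$ term from the cancellation of the $T$-linear pieces and dividing the whole right-hand side by $g_{\Gamma}$ then gives the claimed formula.
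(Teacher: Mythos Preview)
Your proposal is correct and follows essentially the same route as the paper: substitute Propositions \ref{prop:hyperbolic contribution} and \ref{prop:parabolic contribution} into \eqref{eqn:automorphic interpretation}, insert the explicit value of $\mathscr{C}_{\infty\infty}$ from \cite{GradosPippich}, and let $T\to\infty$ using $A_2(T)\to -\tfrac12$ together with $C_2(T),C_3(T),C_4(T)\to 0$ (the paper simply cites \cite{AbbesUllmo} for the latter limits). The only cosmetic difference is that the paper packages the vanishing terms into an auxiliary function $\vartheta(T)$ before passing to the limit, whereas you handle each piece separately.
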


\begin{proof}
From identity \eqref{eqn:automorphic interpretation}, we know that
\begin{align*}%\label{eqn:automorphic interpretation}
	\mathscr{R}_{\infty} = \frac{1}{g_{\Gamma}}\bigg( -\frac{v_{\Gamma}^{-1}}{2}\sum_{j=1}^{\infty}\frac{h_T(r_j)}{\lambda_j} + \mathscr{R}_{\infty}^{\mathrm{hyp}} + \mathscr{R}_{\infty}^{\mathrm{par}}\bigg),
\end{align*} 
where now, by virtue of Proposition \ref{prop:hyperbolic contribution} and Proposition \ref{prop:parabolic contribution}, we have
\begin{align*}
\mathscr{R}_{\infty}^{\mathrm{hyp}} = \frac{v_{\Gamma}^{-1}}{2}\bigg( \lim_{s\rightarrow 1}\bigg(\frac{Z_{\Gamma}'}{Z_{\Gamma}}(s) - \frac{1}{s-1}\bigg) - T + 1 + o(1)\bigg),
\end{align*}
as $T\rightarrow\infty$, and 
\begin{align*}
    \mathscr{R}_{\infty}^{\mathrm{par}} &= \frac{12}{\pi N}\bigg(C_1(T) + A_2(T) \bigg(2\mathscr{C} + \gamma_{\!_{\mathrm{EM}}} - 2\log(N) \bigg)\bigg) + \frac{1-\log(4\pi)}{4\pi}  \\[2mm] 
    & + \frac{v_{\Gamma}^{-1}}{2}(T+1) + \frac{\mathscr{C}_{\infty\infty}}{2} + \gamma_{\!_{\mathrm{EM}}}C_2(T) + C_3(T) + C_4(T).
\end{align*}
Since $$\mathscr{C}_{\infty\infty} = 2v_{\Gamma}^{-1}\bigg(\mathscr{C} - 2\log(N) -\sum_{p|N}\frac{\log(p)}{p^2-1}\bigg)$$ (see, e.g., \cite{GradosPippich}), one gets
\begin{align*}
\mathscr{R}_{\infty} &= \frac{1}{g_{\Gamma}}\bigg( \frac{v_{\Gamma}^{-1}}{2}\lim_{s\rightarrow 1}\bigg(\frac{Z_{\Gamma}'}{Z_{\Gamma}}(s) - \frac{1}{s-1} \bigg) + \frac{12}{\pi N}\bigg( C_1(T) + A_2(T)(2\mathscr{C} + \gamma_{\!_{\mathrm{EM}}} - 2\log(N))\bigg) \\[2mm]
&+ v_{\Gamma}^{-1}\bigg(\mathscr{C} + 1 - 2\log(N) - \sum_{p|N}\frac{\log(p)}{p^2-1}\bigg) + \frac{1-\log(4\pi)}{4\pi} + \vartheta(T)\bigg),
\end{align*}
where $$\vartheta(T) \coloneqq -\frac{v_{\Gamma}^{-1}}{2}\sum_{j=1}^{\infty}\frac{h_T(r_j)}{\lambda_j} + \gamma_{\!_{\mathrm{EM}}}C_2(T) + C_3(T) + C_4(T) + \frac{v_{\Gamma}^{-1}}{2}o_T(1),$$ as $T\rightarrow\infty$. 
\vskip 2mm

Now, since $h_T(r_j),C_2(T),C_3(T)$, and $C_4(T)$ tend to zero as $T\rightarrow\infty$ (see \cite{AbbesUllmo}), we have that  
$\vartheta(T)\rightarrow 0$. Consequently, we get 
\begin{align*}
\mathscr{R}_{\infty} &= \frac{1}{g_{\Gamma}} \bigg( \frac{v_{\Gamma}^{-1}}{2}\lim_{s\rightarrow 1}\bigg(\frac{Z_{\Gamma}'}{Z_{\Gamma}}(s) - \frac{1}{s-1}\bigg) + \frac{12}{\pi N}\bigg(C_1 - \mathscr{C} - \frac{\gamma_{\!_{\mathrm{EM}}}}{2} + \log(N) \bigg) \\[2mm]
&+ v_{\Gamma}^{-1}\bigg( \mathscr{C}+1-2\log(N) - \sum_{p|N}\frac{\log(p)}{p^2-1}\bigg) + \frac{1-\log(4\pi)}{4\pi} \bigg).
\end{align*}
This concludes the proof.
\end{proof}

\begin{proposition}
\label{id:asymptotics of analytical contrib}
Let $N\geq 3$ be a composite, odd, and square-free integer. Then, the following asymptotics holds
\begin{align*}
	\frac{1}{\varphi(N)}\mathcal{A}(N) = 2g_{\Gamma}\log(N) + o(g_{\Gamma}\log(N)),
\end{align*}
as $N\to\infty$.
\end{proposition}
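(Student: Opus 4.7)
The plan is to decompose $\mathcal{A}(N)/\varphi(N)$ via the Abbes--Ullmo identity \eqref{id:Abbes--Ullmo formula} and analyse the asymptotic contribution of each resulting term. By Proposition \ref{prop:iota iso}, for each embedding $\sigma:\mathbbm{Q}(\zeta_N)\hookrightarrow\mathbbm{C}$ there exists $\xi(\sigma)\in U$ such that $g_{\mathrm{Ar}}(0^{\sigma},\infty^{\sigma}) = g_{\mathrm{Ar}}(0_{\xi(\sigma)},\infty)$ on $X(N)$. Since $\mathbbm{Q}(\zeta_N)$ has $\varphi(N)$ complex embeddings while $|U|=\varphi(N)/2$, the assignment $\sigma\mapsto\xi(\sigma)$ is $2$-to-$1$, and we obtain
\[
\mathcal{A}(N) \;=\; 8\,g_{\Gamma}(g_{\Gamma}-1)\sum_{\xi\in U} g_{\mathrm{Ar}}(0_{\xi},\infty).
\]

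Next I would apply \eqref{id:Abbes--Ullmo formula} with $(q_1,q_2)=(0_{\xi},\infty)$ and invoke Lemma \ref{lem:reduction step} to replace $\mathscr{R}_{0_{\xi}}$ by $\mathscr{R}_{\infty}$, yielding
\[
g_{\mathrm{Ar}}(0_{\xi},\infty) \;=\; -2\pi\mathscr{C}_{0_{\xi}\infty} \;-\; \frac{2\pi}{v_{\Gamma}} \;+\; 4\pi\mathscr{R}_{\infty} \;+\; 2\pi\mathscr{G}.
\]
Dividing the resulting expression for $\mathcal{A}(N)$ by $\varphi(N)$ produces
\[
\frac{\mathcal{A}(N)}{\varphi(N)} \;=\; -\frac{16\pi\, g_{\Gamma}(g_{\Gamma}-1)}{\varphi(N)}\sum_{\xi\in U}\mathscr{C}_{0_{\xi}\infty} \;+\; 4g_{\Gamma}(g_{\Gamma}-1)\Bigl(-\tfrac{2\pi}{v_{\Gamma}} + 4\pi\mathscr{R}_{\infty} + 2\pi\mathscr{G}\Bigr).
\]

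Then I would substitute the explicit expression for $\mathscr{R}_{\infty}$ from Proposition \ref{lem:R_infty}, the closed form for $\sum_{\xi\in U}\mathscr{C}_{0_{\xi}\infty}$ provided by \cite{GradosPippich}, and use the bound on $\mathscr{G}$ coming from \cite{JorgensonKramer}. The key growth relations needed are $v_{\Gamma}\sim 4\pi g_{\Gamma}$ and $\varphi(N)=o(g_{\Gamma})$ as $N\to\infty$, together with standard estimates such as $\sum_{p|N}\log(p)/(p^2-1)=O(\log\log N)$ and the known control on the Selberg-zeta constant $\lim_{s\to 1}(Z_{\Gamma}'/Z_{\Gamma}(s)-1/(s-1))$. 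A direct computation shows that the $-2\pi/v_{\Gamma}$ and $2\pi\mathscr{G}$ terms each contribute at most $O(g_{\Gamma})$ and the $4\pi\mathscr{R}_{\infty}$ term contributes at most $O(\log N)$ after multiplication by $4g_{\Gamma}(g_{\Gamma}-1)$, all safely absorbed in $o(g_{\Gamma}\log N)$. The main term $2g_{\Gamma}\log N$ must therefore come entirely from the scattering-constant sum: using \cite{GradosPippich}, one verifies that $\sum_{\xi\in U}\mathscr{C}_{0_{\xi}\infty}$ has leading asymptotics $-\varphi(N)\log N/(2v_{\Gamma})\,(1+o(1))$, and after multiplication by $-16\pi g_{\Gamma}(g_{\Gamma}-1)/\varphi(N)$ and using $v_{\Gamma}\sim 4\pi g_{\Gamma}$, this gives exactly $2g_{\Gamma}\log N + o(g_{\Gamma}\log N)$.

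The hard part will be the precise bookkeeping of all the contributions of order $\log N$: one must verify that every subleading constant appearing in Proposition \ref{lem:R_infty} (the Selberg-zeta constant, the constants $C_1,C_2(T),\ldots,C_4(T)$ hidden in $\mathscr{R}_{\infty}^{\mathrm{par}}$, and the various $1/N$-type corrections) is indeed of size $o(g_{\Gamma}\log N)$ after the appropriate rescaling, and that the coefficient of $\log N$ in the explicit formula for $\sum_{\xi\in U}\mathscr{C}_{0_{\xi}\infty}$ from \cite{GradosPippich} is exactly the one that produces the factor $2$ in $2g_{\Gamma}\log N$. Adding the resulting asymptotics to the geometric estimate of Proposition \ref{asymptotics:geometric part} then yields the main result, Theorem \ref{id:main result of the paper}.
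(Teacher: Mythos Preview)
Your proposal is correct and follows essentially the same route as the paper: reduce the sum over embeddings to a sum over $\xi\in U$ via Proposition~\ref{prop:iota iso}, apply the Abbes--Ullmo identity together with Lemma~\ref{lem:reduction step}, insert the explicit scattering constants from \cite{GradosPippich} (where the $\kappa(\tilde\xi;1,N)$ terms cancel upon summation), plug in Proposition~\ref{lem:R_infty} for $\mathscr{R}_\infty$, and control the Selberg-zeta constant and $\mathscr{G}$ via \cite{JorgensonKramer}. One small caution on the bookkeeping you flag at the end: be careful with the factor of~$2$ coming from the $2$-to-$1$ map $\sigma\mapsto\xi(\sigma)$ versus the factor $|U|=\varphi(N)/2$ appearing when you sum the $\xi$-independent part of $\mathscr{C}_{0_\xi\infty}$; in the paper these are tracked through the constant $C=4\pi(g_\Gamma-1)v_\Gamma^{-1}\to 1$, and the leading term emerges as $2Cg_\Gamma\log N$.
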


\begin{proof}
By Proposition \ref{prop:self-intersection explicit formula}, we have 
\begin{align*}
\frac{1}{\varphi(N)}\mathcal{A}(N) &= \frac{4g_{\Gamma}(g_{\Gamma}-1)}{\varphi(N)}\sum_{\sigma:\mathbbm{Q}(\zeta_N)\hookrightarrow\mathbbm{C}}g_{\mathrm{Ar}}(0^{\sigma},\infty^{\sigma}) \\[2mm]
&= \frac{4g_{\Gamma}(g_{\Gamma}-1)}{\varphi(N)}\sum_{\xi\in{}U}g_{\mathrm{Ar}}(0_{\xi},\infty),
\end{align*}
where in the second identity we identified each embedding $\sigma:\mathbbm{Q}(\zeta_N)\hookrightarrow\mathbbm{C}$ with a unique element $\xi\in{}U$ via the assignment $\sigma(\zeta_N) = e^{2\pi{}i\xi/N}$, and used the isomorphism $\iota_{\sigma}$ from Proposition \ref{prop:iota iso}.
\vskip 2mm

Next, from the identity \eqref{id:Abbes--Ullmo formula} and Lemma \ref{lem:reduction step}, we write $g_{\mathrm{Ar}}(0_{\xi},\infty)$ as follows
\begin{align*}
g_{\mathrm{Ar}}(0_{
\xi},\infty) &= -2\pi\mathscr{C}_{0_{\xi}\infty} - \frac{2\pi}{v_{\Gamma}}+4\pi\mathscr{R}_{\infty} + \mathscr{G},
\end{align*}
where $\mathscr{G}$ is given by \eqref{def_mathscrG}, 
and $\mathscr{C}_{0_{\xi}\infty}$ is given by
\begin{align*}
\mathscr{C}_{0_{\xi}\infty} &= 2v_{\Gamma}^{-1}\bigg( \mathscr{C} - \log(N) + \sum_{p|N}\frac{p\log(p)}{p^2-1} \bigg) + \kappa(\tilde{\xi};1,N);
\end{align*}
we refer to \cite{GradosPippich} for a proof of the latter identity and for the definition of $\kappa(\tilde{\xi};1,N)$. Letting $C:= 4\pi(g_{\Gamma}-1)v_{\Gamma}^{-1}$, we obtain
\begin{align*}
\frac{1}{\varphi(N)}\mathcal{A}(N) &= 2Cg_{\Gamma}\log(N) - 2 g_{\Gamma}C\bigg( \mathscr{C} + \frac{1}{2} + \sum_{p|N}\frac{p\log(p)}{p^2-1}\bigg) - \frac{8\pi g_{\Gamma}(g_{\Gamma}-1)}{\varphi(N)}\sum_{\xi\in U}\kappa(\overline{\xi};1,N) \\[2mm] 
&+ 8\pi g_{\Gamma}(g_{\Gamma}-1)\mathscr{R}_{\infty} + 2\mathscr{G}g_{\Gamma}(g_{\Gamma}-1).
\end{align*}
Observe that, from the definition of $\kappa(\overline{\xi};1,N)$ and the orthogonality relations of Dirichlet characters, we obtain $$\sum_{\xi\in U}\kappa(\overline{\xi};1,N) = 0.$$ Therefore, since $$\sum_{p|N}\frac{p\log(p)}{p^2-1} = O(\log\log(N)),$$ as $N\rightarrow\infty$ (see the proof of Proposition \ref{asymptotics:geometric part}), we get the following asymptotics
\begin{align*}
\frac{1}{\varphi(N)}\mathcal{A}(N) &= 2Cg_{\Gamma}\log(N) + 8\pi g_{\Gamma}(g_{\Gamma}-1)\mathscr{R}_{\infty} + 2\mathscr{G}g_{\Gamma}(g_{\Gamma}-1) + O(g_{\Gamma}\log\log(N)), 
\end{align*}
as $N\rightarrow\infty$. 
\vskip 2mm

Now, we claim that $$8\pi g_{\Gamma}(g_{\Gamma}-1)\mathscr{R}_{\infty} + 2\mathscr{G}g_{\Gamma}(g_{\Gamma}-1) = o(g_{\Gamma}\log(N)),$$ as $N\to\infty$. Indeed, using Proposition \ref{lem:R_infty}, we have
\begin{align*}
8\pi g_{\Gamma}(g_{\Gamma}-1)\mathscr{R}_{\infty} &= C\lim_{s\rightarrow 1}\bigg( \frac{Z_{\Gamma}'}{Z_{\Gamma}}(s) - \frac{1}{s-1}\bigg) + 2C\bigg( \mathscr{C} + 1 -\log(N)-\sum_{p|N}\frac{\log(p)}{p^2-1}\bigg) \\[2mm]
&+ \frac{96(g_{\Gamma}-1)}{N}\bigg(C_1 - \bigg( \mathscr{C} + \frac{\gamma_{\!_{\mathrm{EM}}}}{2}-\log(N)\bigg)\bigg) + 2(g_{\Gamma}-1)(1-\log(4\pi)),
\end{align*}
and by virtue of \cite[p.~27]{JorgensonKramer}, namely $$\lim_{s\rightarrow 1}\bigg( \frac{Z_{\Gamma}'}{Z_{\Gamma}}(s) - \frac{1}{s-1}\bigg) = O(N^{\varepsilon}),$$ for $\varepsilon > 0$, as $N\rightarrow \infty$, the claim follows for a sufficiently small $\varepsilon$. 
\vskip 2mm

Finally, note that 
\begin{align*}
2Cg_{\Gamma}\log(N) = 2g_{\Gamma}\log(N) + o(g_{\Gamma}\log(N)).
\end{align*}
This concludes the proof of the proposition.
\end{proof}

Finally, we can prove the main result of this paper.

\begin{theorem}
\label{id:main result of the paper}
Let $N\geq 3$ be a composite, odd, and square-free integer. Then, the following asymptotics holds
\begin{align*}
e(\Gamma) = 2g_{\Gamma}\log(N) + o(g_{\Gamma}\log(N)),
\end{align*}
as $N\to\infty$.
\end{theorem}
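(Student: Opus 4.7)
The plan is to assemble the main result directly from the three structural ingredients established in Section 6. By Proposition \ref{prop:self-intersection explicit formula}, the self-intersection decomposes as
\begin{align*}
\overline{\omega}^2 = \mathcal{G}(N) + \mathcal{A}(N),
\end{align*}
so dividing by $\varphi(N)$ gives
\begin{align*}
e(\Gamma) = \frac{1}{\varphi(N)}\mathcal{G}(N) + \frac{1}{\varphi(N)}\mathcal{A}(N).
\end{align*}
Hence, the task reduces to collecting the asymptotics for each of the two summands.

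First, I would invoke Proposition \ref{asymptotics:geometric part}, which gives
\begin{align*}
\frac{1}{\varphi(N)}\mathcal{G}(N) = o(g_{\Gamma}\log(N)),
\end{align*}
as $N\to\infty$. This controls the geometric contribution and shows that it is absorbed into the error term.

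Next, I would apply Proposition \ref{id:asymptotics of analytical contrib}, which yields
\begin{align*}
\frac{1}{\varphi(N)}\mathcal{A}(N) = 2g_{\Gamma}\log(N) + o(g_{\Gamma}\log(N)),
\end{align*}
as $N\to\infty$, and which identifies the leading term as purely analytic in origin. Summing the two asymptotics completes the proof of Theorem \ref{id:main result of the paper}.

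No new obstacle arises at this stage, since the hard work has already been carried out in the preceding sections: the geometric part reduces via Lemma \ref{lem:self intersection vertical divisors} to an explicit expression involving $\sum_{p\mid N} p\log(p)/(p^2-1)$, controlled by the classical bound $\sum_{p\mid N} \log(p)/p = O(\log\log(N))$, while the analytic part is handled through the Abbes--Ullmo identity \eqref{id:Abbes--Ullmo formula} combined with Proposition \ref{lem:R_infty} and the bound $\lim_{s\to 1}(Z_{\Gamma}'/Z_{\Gamma}(s) - 1/(s-1)) = O(N^{\varepsilon})$ from \cite{JorgensonKramer}. Thus the proof of the theorem is a one-line combination of Propositions \ref{asymptotics:geometric part} and \ref{id:asymptotics of analytical contrib} via the decomposition of Proposition \ref{prop:self-intersection explicit formula}.
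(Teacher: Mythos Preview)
Your proposal is correct and matches the paper's proof essentially verbatim: the paper also derives the theorem in one line by writing $e(\Gamma) = \frac{1}{\varphi(N)}(\mathcal{G}(N) + \mathcal{A}(N))$ via Proposition~\ref{prop:self-intersection explicit formula} and then substituting the asymptotics from Propositions~\ref{asymptotics:geometric part} and~\ref{id:asymptotics of analytical contrib}. Your additional paragraph summarizing the inputs to those propositions is accurate but goes beyond what the paper includes at this final step.
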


\begin{proof}
Recalling that $e(\Gamma)=\overline{\omega}^2/\varphi(N)$ and using Proposition \ref{prop:self-intersection explicit formula}, we get
\begin{align*}
	e(\Gamma) = \frac{1}{\varphi(N)}\left(\mathcal{G}(N) + \mathcal{A}(N)\right).
\end{align*}
Substituting therein the asymptotics proven in Propositions \ref{asymptotics:geometric part}
and \ref{id:asymptotics of analytical contrib}, and adding up, yields the asserted asymptotics.
\end{proof}

\begin{comment}
%================%
%== REFERENCES ==%
%================%
\section{TODO}
\begin{itemize}
\item Add the acknowledgement. Mention also Kramer. Mention the GRK1800.
\item In the introduction: the literature has to be updated: add the results by the Indian mathematicians
\item In the introduction: we should mention our article for the scattering constants in the introduction
\item In the introduction: add everywhere the precise restrictions $N$
\item In the introduction: add for every math symbol its meaning in case that it is missiung, e.g. for the roots of unity
\item In the introduction: we should better the structure of the whole proof
\item Why do we write $\Gamma$ and not $\Gamma(N)$ throughout this article?
\item Should we better write $g_{X(N)}$ instead of  $g_{\Gamma}$? In the latter 
I do not like that the N is not visible.
\item definition of definition of $\kappa(\tilde{\xi};1,N)$ is missing
\end{itemize}

Further, on page 12, the identity 
\begin{align*}
\frac{1}{\varphi(N)}\mathcal{A}(N) &= 2Cg_{\Gamma}\log(N) - 2 g_{\Gamma}C\bigg( \mathscr{C} + \frac{1}{2} + \sum_{p|N}\frac{p\log(p)}{p^2-1}\bigg) - \frac{8\pi g_{\Gamma}(g_{\Gamma}-1)}{\varphi(N)}\sum_{\xi\in U}\kappa(\overline{\xi};1,N) \\[2mm] 
&+ 8\pi g_{\Gamma}(g_{\Gamma}-1)\mathscr{R}_{\infty} + 2\mathscr{G}g_{\Gamma}(g_{\Gamma}-1)
\end{align*}
has to be explained a little bit more, e.g., by stating the cardinality of $U$. Also in this proof one should add a reference to the definition of $U$.
\end{comment}

%================%
%== REFERENCES ==%
%================%

\def\bibindent{.1mm}

\end{document}